\theoremstyle{plain}
\newtheorem{theorem}{Theorem}[section]
\newtheorem{proposition}[theorem]{Proposition}
\newtheorem{lemma}[theorem]{Lemma}
\newtheorem{corollary}[theorem]{Corollary}
\newtheorem{definition}[theorem]{Definition}
\theoremstyle{definition}
\newtheorem{notation}[theorem]{Notation}
\newtheorem{construction}[theorem]{Construction}
\newtheorem{blank}[theorem]{}
\theoremstyle{remark}
\newtheorem{remark}[theorem]{Remark}
\newtheorem{example}[theorem]{Example}
\newcommand{\bbA}{\mathbb{A}}
\newcommand{\bbC}{\mathbb{C}}
\newcommand{\bbD}{\mathbb{D}}
\newcommand{\bbF}{\mathbb{F}}
\newcommand{\bbG}{\mathbb{G}}
\newcommand{\bbN}{\mathbb{N}}
\newcommand{\bbQ}{\mathbb{Q}}
\newcommand{\bbR}{\mathbb{R}}
\newcommand{\bbS}{\mathbb{S}}
\newcommand{\bbZ}{\mathbb{Z}}
\newcommand{\calA}{\mathcal{A}}
\newcommand{\calB}{\mathcal{B}}
\newcommand{\calC}{\mathcal{C}}
\newcommand{\calD}{\mathcal{D}}
\newcommand{\calG}{\mathcal{G}}
\newcommand{\calM}{\mathcal{M}}
\newcommand{\calN}{\mathcal{N}}
\newcommand{\calO}{\mathcal{O}}
\newcommand{\calP}{\mathcal{P}}
\newcommand{\calS}{\mathcal{S}}
\newcommand{\calT}{\mathcal{T}}
\newcommand{\calU}{\mathcal{U}}
\newcommand{\calV}{\mathcal{V}}
\newcommand{\frkS}{\mathfrak{S}}
\newcommand{\scrH}{\mathscr{H}}
\newcommand{\scrM}{\mathscr{M}}
\newcommand{\scrS}{\mathscr{S}}
\newcommand{\Fp}{\mathbb{F}_p}
\newcommand{\Fq}{\mathbb{F}_q}
\newcommand{\Fbar}{\overline{\mathbb{F}}}
\newcommand{\Zp}{\mathbb{Z}_p}
\newcommand{\Qp}{\mathbb{Q}_p}
\newcommand{\Qq}{\mathbb{Q}_q}
\DeclareMathOperator{\GL}{GL}
\DeclareMathOperator{\GSp}{GSp}
\DeclareMathOperator{\Spec}{Spec}
\DeclareMathOperator{\Sh}{Sh}
\DeclareMathOperator{\Lie}{Lie}
\DeclareMathOperator{\Cent}{Cent}
\DeclareMathOperator{\Hom}{Hom}
\DeclareMathOperator{\im}{im}
\DeclareMathOperator{\Frac}{Frac}
\DeclareMathOperator{\Fil}{Fil}
\begin{document}

\title{The $\mu$-ordinary locus for Shimura varieties of Hodge type}
\author{Daniel Wortmann\thanks{Department of Mathematics, University of Paderborn (Warburger Stra{\ss}e 100, 33098 Paderborn)}\\
        e-mail: \texttt{daniel.wortmann@math.upb.de}
        }
\date{\today}

\maketitle

\begin{abstract}
  We review the Newton stratification and Ekedahl-Oort stratification on the special fiber of a smooth integral model for a Shimura variety of Hodge type at a prime of good reduction. We show that the $\mu$-ordinary locus coincides with the generic Ekedahl-Oort stratum, and that for any two geometric points in the $\mu$-ordinary locus there is an isomorphism of the attached Dieudonn{\'e} modules with additional structure. As a consequence, we proof that the $\mu$-ordinary locus is open and dense, thus generalizing the results which were already known in the PEL-case. To prove our results we provide a method which allows to reduce the equality of strata to a group theoretic statement.
\end{abstract}

\tableofcontents

\section{Introduction}

Let $G$ be a connected reductive group over $\bbQ$ and let $X$ be a $G(\bbR)$-conjugacy class of homomorphisms $\bbS\to G_{\bbR}$ such that the pair $(G,X)$ is a Shimura datum. Then for any sufficiently small open and compact subgroup $K\subseteq G(\bbA_f)$ the associated Shimura variety $\Sh_K(G,X):=G(\bbQ)\setminus X\times G(\bbA_f)/K$ is a smooth, projective complex variety and admits a canonical model over the reflex field $E$ of the Shimura datum $(G,X)$. 

Let $p$ be a prime number. Suppose that $K_p$ is a hyperspecial subgroup of $G(\Qp)$, and let $\Sh_{K_p}(G,X):=\varprojlim_{K^p}\Sh_{K_pK^p}(G,X)$, where the limit is taken over open compact subgroups $K^p\subseteq G(\bbA_f^p)$. In \cite{La} Langlands suggested that $\Sh_{K_p}(G,X)$ should have an integral canonical model $\scrS_{K_p}(G,X)$ over the local ring $o_{E,(v)}$ at any place $v$ of $E$ lying above $p$, this conjecture was later refined by Milne in \cite{Mi1} (see Def. \ref{CanModelDef} for the precise notion of an integral canonical model). In particular, if $\scrS_{K^p}(G,X)$ exists, then for any open compact $K=K_pK^p\subseteq G(\bbA_f)$ (with $K^p$ sufficiently small) the quotient $\scrS_K(G,X):=\scrS_{K_p}(G,X)/K^p$ is a smooth model for the Shimura variety $\Sh_K(G,X)$ over $O_{E,(v)}$.

Consider the classical example of a symplectic Shimura datum, associated to a symplectic rational vector space $(V,\psi)$. 
In this case $G=\GSp(V,\psi)$ and the models $\scrS_K(G,S^{\pm})$ are given as moduli spaces over $\Spec(\bbZ_{(p)})$ of principally polarized abelian schemes of relative dimension $n:=\dim_{\bbQ}(V)/2$ together with a mod-$K$ level structure, so the special fibers of such models can be studied via this moduli interpretation: There is a universal abelian scheme $\calA\to\scrS_K(G,S^{\pm})$, and thus for any algebraically closed field $k$ of characteristic $p$ a point $x\in\scrS_K(G,S^{\pm})(k)$ defines an abelian variety $\calA_x$ of dimension $n$ over $k$. The classification of the associated $p$-divisible groups $\calA_x[p^{\infty}]$ up to isogeny then gives the Newton polygon stratification of $\scrS_K(G,S^{\pm})\otimes\Fp$, which has been studied by Oort (\cite{Oo1}), de Jong-Oort (\cite{dJO}), and many others. On the other hand, the Ekedahl-Oort stratification (or EO-stratification) on $\scrS_K(G,S^{\pm})\otimes\overline{\Fp}$ is obtained by classifying the $p$-torsion subgroups $\calA_x[p]$ up to isomorphism (as $BT_1$-groups), see \cite{Oo2}. Other examples of stratifications arise from considering $\calA_x[p^{\infty}]$ up to isomorphism (\cite{Oo3}), and from the $p$-rank of $\calA_x[p]$ (\cite{Kob}). The \emph{ordinary locus} in $\scrS_K(G,S^{\pm})\otimes\Fp$ is defined as the set of those points $x$ such that for a geometric point $\hat{x}$ lying over $x$ the $p$-divisible group $\calA_{\hat{x}}[p^{\infty}]$ is isogenous to a product of {\'e}tale and multiplicative groups. It has been known for a long time that the ordinary locus is a dense subset of $\scrS_K(G,S^{\pm})$, see for example Koblitz' proof in \cite{Kob}.



In the case of a PEL-type Shimura variety integral canonical models have been constructed by Kottwitz in \cite{Ko2}. They have an explicit interpretation as moduli spaces over $\Spec(o_{E,(v)})$ of abelian schemes with additional structures, so again one has means to study the special fibers $\scrS_K(G,X)\otimes\kappa(v)$ (where $\kappa(v)$ is the residue class field of $E$ at $v$) via the universal object. However, the naive definitions lead to undesirable results in this context. For example the density theorem for the ordinary locus fails, for the simple reason that the ordinary locus may be empty. This has led to refined stratifications which pay respect to the additional structures on the abelian varieties in question (see e.g. \cite{RR} for the Newton stratificaton, \cite{VW} for the EO-stratification). The $\mu$-\emph{ordinary locus} of $\scrS_K(G,X)\otimes\kappa(v)$ is here defined to be the most general Newton stratum, it was shown to be open and dense by Wedhorn in \cite{We1} by a deformation theoretic argument. This theorem was later reproven by Moonen in \cite{Mo2} who showed that the $\mu$-ordinary Newton stratum coincides with the unique open Ekedahl-Oort stratum, which was by then known to be dense (see \cite{We2}). This is one instant of the fruitful interaction of these two statifications, another being the recent work of Viehmann and Wedhorn (see \cite{VW}), who deduce the nonemptiness of all Newton strata from the fact that all Ekedahl-Oort strata are nonempty.

In this article we prove the density of the $\mu$-ordinary locus in the case of a Shimura variety of Hodge type. Although in general we do not have a moduli interpretation of the models any more in this case, there is still a natural abelian scheme $\calA\to\scrS_K(G,X)$ together with a tensor structure on $H_{\mathrm{dR}}^1(\calA/\scrS_K(G,X))$, which allows to define Newton strata and EO-Strata in analogy to the PEL-case. In order to show the density of the $\mu$-ordinary locus we generalize the technique used in \cite{VW} to compare Newtonstrata and Ekedahl-Oort strata with the aid of group theoretic methods. In particular this allows to show that the $\mu$-ordinary locus once again agrees with the unique open Ekedahl-Oort stratum, thereby generalizing Moonen's result from \cite{Mo2}. We remark that the known proofs of these results for PEL-type Shimura varieties often rely on an explicit case-by-case analysis, so that the group theoretic approach also provides a new point of view in this case.\\

Let us explain in more detail the contents of this paper. We fix an embedding $(G,X)\hookrightarrow(\GSp(V,\psi),S^{\pm})$ of Shimura data for some symplectic vector space $(V,\psi)$. There is a connected reductive group scheme $\calG$ over $\Zp$ such that $K_p=\calG(\Zp)\subseteq G(\Qp)$. The existence of integral canonical models for $\Sh_K(G,X)$ (where $K=K_pK^p$ for $K^p$ sufficiently small) was shown by Kisin in \cite{Ki1} (with some restrictions for $p=2$). The starting point of the construction is the observation that one may choose a lattice $\Lambda\subseteq V$ and a finite set of tensors $s$ over $\Lambda_{\bbZ_{(p)}}$ such that $\calG\subseteq\GL(\Lambda_{\Zp})$ is the stabilizer subgroup of $s_{\Zp}$. The model $\scrS_K(G,X)$ is then defined as the normalization of the closure of $\Sh_K(G,X)$ in a suitable moduli space of abelian schemes over $o_{E,(v)}$, thus by construction $\scrS_K(G,X)$ naturally comes along with an abelian scheme $\calA$ on it. The tensors $s$ can be shown to give rise to tensors $s_{\mathrm{dR}}$ over $H_{\mathrm{dR}}^1(\calA\otimes E/\Sh_K(G,X))$. The key step in the proof that the schemes $\scrS_K(G,X)$ indeed give an integral canonical model for $\Sh_{K_p}(G,X)$ is now to show that $\scrS_K(G,X)$ is \emph{smooth}. The technique used in \cite{Ki1} to achieve this shows at the same time that the $s_{\mathrm{dR}}$  extend to tensors $s_{\mathrm{dR}}^{\circ}$ over $H_{\mathrm{dR}}^1(\calA/\scrS_K(G,X))$, in a way such that for any closed or geometric point $x$ of the special fiber of $\scrS_K(G,X)$ one gets induced tensors over the contravariant Dieudonn{\'e} module $\bbD(\calA_x[p^{\infty}])$ which are Frobenius invariant and define a subgroup isomorphic to $\calG_{W(\kappa(x))}$. 

To make this more precise, let us introduce some more notation: Let $\Fbar$ be an algebraic closure of $\Fp$. For simplicity, throughout the rest of the introduction we restrict ourselves to $\Fbar$-valued points. In fact, to introduce the stratifications we also need similar descriptions for points over more general algebraically closed fields and points with finite residue fields. However, since all strata will be locally closed subsets, the $\Fbar$-valued points contain all topological informations on the stratifications, once defined.\\
Let $\calO:=W(\Fbar)$ and  $L:=\mathrm{Frac}(\calO)$, and let $\sigma$ be the Frobenius isomorphism of $\Fbar$ resp. of $\calO$ and $L$. We fix a Borel pair $(\calB,\calT)$ of $\calG$ over $\calO$ which is $\sigma$-invariant. This exists since $\calG$ is quasisplit. Our Shimura datum defines in the usual way a conjugacy class $[\nu]$ of cocharacters for $G$, and hence for $\calG$. We define $\mu$ as the unique dominant cocharacter in $X_*(\calT)$ with respect to $\calB$ such that $\sigma^{-1}(\mu)^{-1}$ lies in $[\nu]$.\\
Let $\Lambda^*$ be the dual $\bbZ$-module of $\Lambda$. The tensors $s$ can also be viewed as tensors over $\Lambda^*_{\bbZ_{(p)}}$ in a canonical way. We let $\calG$ act on $\Lambda^*_{\Zp}$ via the contragredient representation $\GL(\Lambda)\to\GL(\Lambda^*),\ g\mapsto g^{\vee}:=(g^{-1})^*$. 

Now let $x$ be an $\Fbar$-valued point of $\scrS_K(G,X)$, and let $(\bbD_x,F,V):=\bbD(\calA_x[p^{\infty}])$ be the associated contravariant Dieudonn{\'e} module over $\calO$, then the following hold (Corollary \ref{CrysTensors}, Lemma \ref{LinearizationLem}):
\begin{enumerate}[(1)]
  \item The tensors $s_{\mathrm{dR}}^{\circ}$ induce $F$-invariant tensors $s_{\mathrm{cris},x}$ on $\bbD_x$, and there is an isomorphism of $\calO$-modules $\Lambda^*_{\calO}\simeq\bbD_x$ which identifies $s_{\calO}$ with $s_{\mathrm{cris},x}$.
  \item If we identify $\bbD_x$ with $\Lambda^*_{\calO}$ using an isomorphism as in (1), then $F=g^{\vee}(1\otimes\sigma)$ for some $\calG(\calO)\mu(p)\calG(\calO)$, and this element is independent of the choice of the isomorphism up to $\sigma$-conjugation by an element of $\calG(\calO)$.
\end{enumerate}
Thus, writing $\calC(\calG,\mu)$ for the set of $\calG(\calO)$-$\sigma$-conjugacy classes of the double coset $\calG(\calO)\mu(p)\calG(\calO)\subseteq G(L)$, we obtain a well-defined map 
\[
  \gamma\colon \scrS_K(G,X)(\Fbar)\longrightarrow\calC(\calG,\mu)
\]
by sending $x$ to the conjugacy class of the element $g$.

Properties (1) and (2) are direct consequences of the results in \cite{Ki1}, though they are not explicitly stated there. We refer the reader also to \S 1 of the recent preprint \cite{Ki2}. The dual lattice appears here due to the fact that we use contravariant Dieudonn{\'e} theory, which is also the reason for our definition of $\mu$.\\

The Newton stratification is easily described in this context: We have a natural map $\tilde{\theta}\colon\calC(\calG,\mu)\to B(G)$, where $B(G)$ denotes the set of $\sigma$-conjugacy classes in $G(L)$. The Newton strata are then given as the fibers $\calN^b=\theta^{-1}(\{b\})$ of the composite map
\[
  \theta\colon\scrS_K(G,X)(\Fbar)\stackrel{\gamma}{\longrightarrow}\calC(\calG,\mu)\stackrel{\tilde{\theta}}{\longrightarrow} B(G).
\]
Equivalently, two points $x,x'\in\scrS_K(G,X)(\Fbar)$ lie in the same Newton stratum if and only if there is an isomorphism of isocrystals $\bbD_x\otimes_{\calO}L\simeq\bbD_{x'}\otimes_{\calO}L$ which respects the tensors $s_{\mathrm{cris},x}$ and $s_{\mathrm{cris},x'}$. These strata $\calN^b$ are in fact already defined over $\kappa(v)$ (see Section \ref{NewtStratSec} for the precise definition), and a result of Vasiu (\cite{Va1}, 5.3.1.) shows that they are locally closed subsets of $\scrS_K(G,X)\otimes\kappa(v)$. The image of $\tilde{\theta}$ is the subset $B(G,\mu)\subseteq B(G)$ which has already been considered in the context of PEL-type Shimura varieties and affine Deligne-Lusztig sets. It is endowed with a partial order $\preceq$, and contains a unique maximal element $b_{\mathrm{max}}\in B(G,\mu)$ with respect to this order. We define the $\mu$-\emph{ordinary locus} in $\scrS_K(G,X)\otimes\kappa(v)$ as the Newton stratum $\calN^{b_{\mathrm{max}}}$.\\

We can now state our first main theorem:
\begin{theorem}
\label{MainThm1}
  The $\mu$-ordinary locus is open and dense in $\scrS_K(G,X)\otimes\kappa(v)$.
\end{theorem}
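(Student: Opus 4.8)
The plan is to reduce the topological statement about the $\mu$-ordinary locus to a group-theoretic statement about the map $\gamma$ and the combinatorics of $\calC(\calG,\mu)$, together with known results on the Ekedahl--Oort stratification. Concretely, I would proceed as follows.

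\textbf{Step 1: Openness.} Since the Newton strata $\calN^b$ are locally closed by Vasiu's theorem, and since $B(G,\mu)$ carries the partial order $\preceq$ with maximal element $b_{\mathrm{max}}$, it suffices to show that $\calN^{b_{\mathrm{max}}}$ is open, equivalently that its complement $\bigcup_{b \prec b_{\mathrm{max}}} \calN^b$ is closed. The standard input here is the semicontinuity of the Newton point: for the universal $p$-divisible group with $\calG$-structure, the Newton polygon goes up under specialization. This is where I would invoke the compatibility of $\theta$ with specialization — a point of the special fiber specializing to another has a Newton class that is $\preceq$ the generic one in the closure — so the union of strata with Newton class $\preceq$ any fixed $b$ is closed. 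Hence each $\calN^b$ is open in the union of the strata lying below it, and in particular $\calN^{b_{\mathrm{max}}}$, being the stratum attached to the unique maximal element, is open in $\scrS_K(G,X)\otimes\kappa(v)$.

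\textbf{Step 2: Identification with the generic Ekedahl--Oort stratum.} The heart of the argument is to show $\calN^{b_{\mathrm{max}}}$ coincides with the generic (open, dense) EO-stratum, which generalizes Moonen's theorem. The key point (from properties (1)--(2) in the excerpt) is that both stratifications are governed by the single invariant $g \in \calG(\calO)\mu(p)\calG(\calO)$ up to $\sigma$-conjugacy: the EO-stratum is recovered from $g$ modulo $\calG(\calO)$ on both sides (the ``mod $p$'' information), while the Newton stratum is recovered from the $\sigma$-conjugacy class of $g$ in $G(L)$. I would set up a purely group-theoretic lemma: inside the double coset $\calG(\calO)\mu(p)\calG(\calO)$, the $\calG(\Fbar)$-conjugacy class (over the residue field, i.e.\ the EO-type) of the reduction of $g$ that is ``generic'' is the one whose $\sigma$-conjugacy class in $G(L)$ is $b_{\mathrm{max}}$. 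This can be made precise via the parametrization of EO-strata by a subset of the Weyl group (or by $\calG(\Fbar)$-conjugacy classes of certain group-theoretic data) and by matching up the dominant/generic element on the EO-side with the $\mu$-ordinary element $b_{\mathrm{max}} = \mu^{\diamond}$ (the Newton class of $\mu(p)(1\otimes\sigma)$ itself, appropriately averaged). One then checks: (a) the generic EO-stratum is nonempty and its closure is everything (known, \cite{We2}); (b) on the generic EO-stratum the Dieudonné module with tensors is, up to isomorphism over $\calO$, the ``standard'' one $F = \mu(p)(1\otimes\sigma)$ (this is the linearization/rigidity statement, that two geometric points in the generic stratum have isomorphic Dieudonné modules with structure); hence (c) its image under $\theta$ is the single class $b_{\mathrm{max}}$, so the generic EO-stratum is contained in $\calN^{b_{\mathrm{max}}}$; and conversely, since $\calN^{b_{\mathrm{max}}}$ is open (Step 1) and the generic EO-stratum is dense, the two open sets meet, and by the containment they must be equal.

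\textbf{Step 3: Density.} Once $\calN^{b_{\mathrm{max}}}$ equals the generic EO-stratum, density is immediate: the generic EO-stratum is dense in $\scrS_K(G,X)\otimes\overline{\kappa(v)}$ by \cite{We2} (via \cite{VW}-type arguments that all EO-strata are nonempty and the poset has a unique maximal element whose stratum is open), and density descends to $\scrS_K(G,X)\otimes\kappa(v)$. Alternatively one can argue density directly: $\calN^{b_{\mathrm{max}}}$ is open and nonempty, and since $\scrS_K(G,X)\otimes\kappa(v)$ is connected (or: each connected component meets the $\mu$-ordinary locus, which follows from the fact that $b_{\mathrm{max}}$ is attained in each component because the boundary behavior forces an open stratum), a nonempty open set in an irreducible-component-wise setting is dense once we know it meets every component — and nonemptiness per component again comes from the EO-side.

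\textbf{Main obstacle.} The crux is Step 2, and within it the group-theoretic matching: proving that the ``generic'' reduction type inside $\calG(\calO)\mu(p)\calG(\calO)$ has $\sigma$-conjugacy class exactly $b_{\mathrm{max}}$, and, dually, that the $\mu$-ordinary Newton stratum is exactly the locus where the Dieudonné module with $\calG$-structure is isomorphic to the standard one. This requires the careful bookkeeping of the dual lattice and the twist by $\sigma^{-1}$ in the definition of $\mu$ (so that everything is compatible with \emph{contravariant} Dieudonné theory), and it requires knowing that on a dense open the invariant $\gamma(x)$ is the unique maximal/generic one — which is really the assertion that the map $\gamma$, restricted to a suitable open, is constant. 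I expect this rigidity (constancy of $\gamma$ on the generic stratum) to be the technically demanding point; everything else is assembling known semicontinuity and density inputs.
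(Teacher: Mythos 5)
Your high-level strategy — identify $\calN^{b_{\mathrm{max}}}$ with the generic Ekedahl--Oort stratum $\calS^{w_{\mathrm{max}}}$ and reduce everything to a group-theoretic statement about $\calC(\calG,\mu)$ via the commutative diagram through $\gamma$ — is exactly the paper's route, and you correctly isolate both halves of the rigidity assertion (the generic EO-type sits in a single $K$-$\sigma$-conjugacy class, and the $\mu$-ordinary Newton class intersected with $K\mu(p)K$ is a single $K$-$\sigma$-conjugacy class). However, two of your steps have genuine gaps.

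\textbf{Step 1 fails as written.} You invoke semicontinuity (Grothendieck specialization) for the Newton stratification to get openness of $\calN^{b_{\mathrm{max}}}$ directly. But the paper explicitly remarks, after Definition \ref{NewtonStratDef}, that this specialization theorem is \emph{not known} for general Hodge type Shimura varieties — only local closedness of the $\calN^b$ is available, via Vasiu's comparison with the classical Newton polygon stratification (Prop.\ \ref{NewtonStratProp}), and that comparison does not make $\calN^{b_{\mathrm{max}}}$ open, because $\Delta(b_{\mathrm{max}})$ need not be the ordinary Newton polygon. In the paper, openness of $\calN^{b_{\mathrm{max}}}$ is a \emph{consequence} of the equality with $\calS^{w_{\mathrm{max}}}$ (which is open by Zhang's theorem), not an independent input, so you cannot feed it into Step 2.

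\textbf{Step 2's ``conversely'' is a logical error.} From $\calS^{w_{\mathrm{max}}}\subseteq\calN^{b_{\mathrm{max}}}$, both open, and $\calS^{w_{\mathrm{max}}}$ dense, it does not follow that they are equal (take two nested dense open sets). You need a direct proof of the reverse inclusion $\calN^{b_{\mathrm{max}}}\subseteq\calS^{w_{\mathrm{max}}}$, which in the group-theoretic translation is the implication $K\mu(p)K\cap[\mu(p)]\subseteq\langle\mu(p)\rangle$: an element $\sigma$-conjugate to $\mu(p)$ by some $h\in G(L)$ and lying in $K\mu(p)K$ is actually $\sigma$-conjugate to $\mu(p)$ by an element of $K$. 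This is the nontrivial direction of Prop.\ \ref{OrdEqProp1}/\ref{OrdEqProp2}, and the paper proves it using the Hodge--Newton decomposition for affine Deligne--Lusztig sets (Mantovan--Viehmann), combined with a Lang's theorem argument for the centralizer Levi $\calM = \Cent_{\calG}(\bar\mu)$. Your proposal never names a mechanism for this step; without it the argument does not close, and without the equality you also cannot recover openness. The inclusion you \emph{do} establish, $\calS^{w_{\mathrm{max}}}\subseteq\calN^{b_{\mathrm{max}}}$, together with density of the EO-stratum, would give density of $\calN^{b_{\mathrm{max}}}$, but openness remains unproved by your route.
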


To prove this, we relate the Newton stratification to the Ekedahl-Oort stratification on $\scrS_K(G,X)\otimes\Fbar$, which has been defined by C. Zhang in \cite{Zh1}. Just as in the case of a PEL-type Shimura variety the definition of this stratification relies on the theory of $\calG_{\Fp}$-zips. As the precise construction is somewhat involved, we only state the main results here, see Section \ref{EOStratSec} for details. 
Let $(W,S)$ be the Weyl group of $\calG$ with respect to $(\calB,\calT)$, and let $J\subseteq S$ be the type of the cocharacter $\sigma(\mu)$. In analogy to the PEL-case the Ekedahl-Oort stratification is then parametrized by the set ${^J}W$ of shortest left-coset representatives for $W_J$ in $W$. This set carries a partial order $\preceq$, which refines the Bruhat order, and there is again a unique maximal element $w_{\mathrm{max}}\in{^J}W$ with respect to $\preceq$. By the main results of \cite{Zh1}, each stratum $\calS^w\subseteq\scrS_K(G,X)\otimes\Fbar$ is locally closed, and the closure of $\calS^w$ is precisely the union of the strata $\calS^{w'}$ with $w'\preceq w$. Since this in particular implies that $\calS^{w_{\mathrm{max}}}$ is open and dense in $\scrS_K(G,X)\otimes\Fbar$, Theorem \ref{MainThm1} follows directly from our second main result, which generalizes Moonen's description of the $\mu$-ordinary locus in the PEL-case: 

\begin{theorem}
\label{MainThm2}
  The strata $\calN^{b_{\mathrm{max}}}$ and $\calS^{w_{\mathrm{max}}}$ are equal as subsets of $\scrS_K(G,X)\otimes\Fbar$. Furthermore, for any two $\Fbar$-valued points $x,x'$ in this set there is an isomorphism of Dieudonn{\'e} modules $\bbD_x\simeq\bbD_{x'}$ which identifies $s_{\mathrm{cris},x}$ with $s_{\mathrm{cris},x'}$.
\end{theorem}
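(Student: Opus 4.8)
The plan is to reduce the geometric statement to a purely group-theoretic statement on the level of $\calG(\calO)$-$\sigma$-conjugacy classes in the double coset $\calG(\calO)\mu(p)\calG(\calO)$. Recall from the introduction that every $\Fbar$-point $x$ of $\scrS_K(G,X)$ carries a contravariant Dieudonn\'e module $\bbD_x$ with Frobenius-invariant tensors $s_{\mathrm{cris},x}$, and that after fixing an isomorphism $\Lambda^*_{\calO}\simeq\bbD_x$ matching $s_{\calO}$ with $s_{\mathrm{cris},x}$ the Frobenius is of the form $g^\vee(1\otimes\sigma)$ for some $g\in\calG(\calO)\mu(p)\calG(\calO)$, well-defined up to $\sigma$-conjugation by $\calG(\calO)$. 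On the other hand, the $\calG_{\Fp}$-zip attached to $x$ by Zhang determines the Ekedahl-Oort invariant $w\in{}^JW$; and since the $\calG_{\Fp}$-zip is itself built from the reduction mod $p$ of $(\bbD_x, F, V)$ together with the tensors, both invariants $\gamma(x)\in\calC(\calG,\mu)$ and the EO-invariant are functions of the same linear-algebraic datum. The key observation is therefore that there are well-defined maps $\calC(\calG,\mu)\to {}^JW$ and $\calC(\calG,\mu)\to B(G,\mu)$ factoring $\theta$ and the EO-map through $\gamma$, and Theorem \ref{MainThm2} will follow once I prove the group-theoretic assertion:

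\begin{quote}
the element $g_{\max}:=\mu(p)\in\calG(\calO)\mu(p)\calG(\calO)$ is, up to $\calG(\calO)$-$\sigma$-conjugacy, the unique element mapping to $b_{\max}\in B(G,\mu)$, and it is also the unique element mapping to $w_{\max}\in{}^JW$; in particular these two conditions cut out the same single $\calG(\calO)$-$\sigma$-conjugacy class.
\end{quote}

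First I would set up the dictionary carefully: given $g=h_1\mu(p)h_2$ with $h_i\in\calG(\calO)$, reducing the lattice chain $\bbD_x\supseteq F\bbD_x$ mod $p$ produces a $\calG_{\Fp}$-zip whose type in ${}^JW$ can be read off from the relative position of the Hodge filtration and its conjugate, and I would identify this combinatorially with the image of $g$ under the Bruhat-type decomposition of $\calG(\calO)\backslash\calG(\calO)\mu(p)\calG(\calO)/\calG(\calO)$ — this is exactly the kind of computation carried out in \cite{VW} in the PEL setting, and the argument is group-theoretic so it transfers verbatim. Next, using the Kottwitz classification I would verify that the Newton point of $g=\mu(p)$ is the Newton point of $b_{\max}$, i.e. the $\sigma$-average of the dominant conjugate of $\mu$, and check that $\mu(p)$ is basic-free/central in the appropriate Levi so that it is the unique $\sigma$-conjugacy class in $\calC(\calG,\mu)$ hitting $b_{\max}$; dually, I would check $\mu(p)$ gives the longest element $w_{\max}\in{}^JW$ by showing its associated zip has the open relative position. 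Granting these two uniqueness facts, the fiber $\theta^{-1}(b_{\max})=\gamma^{-1}\big(\{[\mu(p)]\}\big)$ equals the fiber of the EO-map over $w_{\max}$, i.e. $\calN^{b_{\max}}=\calS^{w_{\max}}$ as subsets of $\scrS_K(G,X)\otimes\Fbar$. The final sentence about the isomorphism of Dieudonn\'e modules is then immediate: any two points in this common stratum have $\gamma(x)=\gamma(x')=[\mu(p)]$, so by property (2) of the introduction their Frobenii are $\calG(\calO)$-$\sigma$-conjugate, which is precisely an isomorphism $\bbD_x\simeq\bbD_{x'}$ carrying $s_{\mathrm{cris},x}$ to $s_{\mathrm{cris},x'}$.

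The main obstacle I anticipate is establishing the uniqueness of the $\calG(\calO)$-$\sigma$-conjugacy class in $\calC(\calG,\mu)$ lying over $b_{\max}$ — equivalently, that the $\mu$-ordinary isocrystal-with-$\calG$-structure admits essentially only one integral lattice with the given tensors up to $\calG(\calO)$-conjugacy. In the PEL case this is handled by explicit matrix computations on Dieudonn\'e modules; here I would instead argue intrinsically, using that for $g=\mu(p)$ the twisted centralizer $J_g$ is (the $\Qp$-points of) a reductive group whose reduction controls the lattices, and that $\mu$ being minuscule forces the double coset $\calG(\calO)\mu(p)\calG(\calO)$ to be "small" enough that the $\sigma$-conjugacy class of $\mu(p)$ meets it in a single $\calG(\calO)$-orbit — this is where the bulk of the technical work, and the place where the Hodge-type (as opposed to PEL-type) generality genuinely needs the group-theoretic reformulation rather than a moduli-theoretic shortcut, will be concentrated.
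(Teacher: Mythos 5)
Your high-level strategy matches the paper exactly: factor both the Newton map and the Ekedahl--Oort map through $\gamma\colon\scrS(\Fbar)\to\calC(\calG,\mu)$, and reduce the equality of the two extremal strata to the purely group-theoretic assertion that in $\calC(\calG,\mu)$ the fiber of $\tilde\theta$ over $b_{\max}=[\mu(p)]$ and the fiber of $\tilde\zeta$ over $w_{\max}$ both consist of the single element $\langle\mu(p)\rangle$. The final deduction of the isomorphism $(\bbD_x,s_{\mathrm{cris},x})\simeq(\bbD_{x'},s_{\mathrm{cris},x'})$ from $\gamma(x)=\gamma(x')$ via Corollary~\ref{LinearizationCor} is also correct.

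However, there are two places where you assert the hard content instead of supplying it, and in each case what you sketch is either incomplete or goes in a direction the paper shows to be unnecessary. First, the factorization $\zeta=\tilde\zeta\circ\gamma$ does \emph{not} ``transfer verbatim'' from \cite{VW}: here one must compare Zhang's $\calG_{\Fp}$-zip built from the de Rham cohomology with tensors against the mod-$p$ reduction of the Frobenius matrix $g_\beta$. This requires Oda's identification $\Fil^1\calV^\circ_x=\ker(\overline F)$, $\calD_x=\im(\overline F)$, an explicit computation of the Cartier isomorphisms in terms of $\overline F^{\mathrm{lin}}$ and $\overline V^{\mathrm{lin}}$, and verification that the standard zip one obtains is $\underline{I}_{\bar h}$ when $g_\beta=h\mu(p)$; this is the content of Proposition~\ref{ZetaFacProp}(ii) and has to be done in the Hodge-type setting rather than cited. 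In particular, well-definedness of $\tilde\zeta$ on $K$-$\sigma$-conjugacy classes (Proposition~\ref{ZetaFacProp}(i)) also requires an explicit Iwahori/parahoric factorization argument.

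Second, and more seriously, your sketch for the uniqueness of $\langle\mu(p)\rangle$ over $b_{\max}$ --- invoking the twisted centralizer $J_g$ together with ``$\mu$ minuscule forces the double coset to be small'' --- is not a proof and moreover leans on minusculeness, whereas the paper's Proposition~\ref{OrdEqProp2} explicitly proves the statement without assuming $\mu$ minuscule. The paper's actual argument for $(i)\Rightarrow(ii)$ passes to the Levi $\calM=\Cent_\calG(\bar\mu)$, observes $\mu(\epsilon)$ is central in $\calM$, applies the Hodge--Newton decomposition of Mantovan--Viehmann to get $X_\mu^{\calG}(\mu(\epsilon))\cong X_\mu^{\calM}(\mu(\epsilon))$, and finishes with Lang's theorem for $\calM(\calO)$. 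The companion implication $(iii)\Rightarrow(ii)$ (which you need to identify $\tilde\zeta^{-1}(w_{\max})$) is handled via the fundamental-alcove result of G\"ortz--He--Nie, showing every element of $I\mu(\epsilon)I$ is $I$-$\sigma$-conjugate to $\mu(\epsilon)$; this is entirely absent from your sketch. So while the framework you set up is the right one, the group-theoretic core is announced rather than established, and the one technique you do gesture at (minusculeness) is not what actually makes the argument work.
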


To show this, we work with $\Fbar$-valued points and make use of the map $\gamma\colon\scrS_K(G,X)(\Fbar)\to\calC(\calG,\mu)$ considered above: By definition of the Ekedahl-Oort stratification, the $\Fbar$-valued points of the strata $\calS^w$ are the fibers of a map $\zeta\colon\scrS_K(G,X)(\Fbar)\to{^J}W$. We show that this map factors via $\gamma$, giving rise to a commutative diagram
\[
  \begin{xy}
     \xymatrix{
                                                                       & &                   & B(G,\mu)\\
       \scrS_K(G,X)(\Fbar)\ar[rrru]^{\theta}\ar[rr]^{\gamma}\ar[rrrd]_{\zeta} & & \calC(\calG,\mu) \ar[ru]_{\tilde{\theta}}\ar[rd]^{\tilde{\zeta}} &    \\
                                                                       & &                   & {^JW}
     }
  \end{xy}
\]
which allows to some extend to compare the stratifications inside the set $\calC(\calG,\mu)$. This is in analogy to the method already used in \cite{VW} for the PEL-case. In particular, the diagram shows that Theorem \ref{MainThm2} follows once we know that $\tilde{\theta}^{-1}(\{b_{\mathrm{max}}\})=\tilde{\zeta}^{-1}(\{w_{\mathrm{max}}\})$ in $\calC(\calG,\mu)$, and that this set consists of a single element. Since we can give a precise description of the fibers of $\tilde{\theta}$ and $\tilde{\zeta}$, we are reduced to a purely group theoretic result for the group $\calG$. 

We finally prove this to be true, in a more general context, in the final section of this article, which is logically independent of the rest of the paper. \\

\noindent
{\bf Acknowledgements.}
I wish to thank my advisor T. Wedhorn deeply for his continuous encouragement and interest in this work. Further I am grateful to E. Lau for helpful discussions on Dieudonn{\'e} theory and much useful advice, and to J.-S. Koskivirta for many helpful comments.

\section{General notations and conventions}

\begin{blank}
\label{sigmaPrep}
  For a perfect field $k$ of positive characteristic $p$ we write $W(k)$ for the Witt ring over $k$, and $L(k)$ for its quotient field. We generally denote by $\sigma$ the Frobenius automorphism $a\mapsto a^p$ of $k$ (with the exception of the last section, where $\sigma$ will denote a finite power of this map), and also its lift to $W(k)$ and $L(k)$. 
  
  Let $k$ be a perfect field of characteristic $p$. Let $R$ be either $k$ or $W(k)$, and let $R_0\subseteq R$ be the subring of elements which are fixed by $\sigma$ (i.e., either $R_0=\Fp$ or $R_0=\Zp$). For any $R$-module $M$ let $M^{(\sigma)}:=M\otimes_{R,\sigma}R$, and for a homomorphism $\beta\colon M\to N$ of $R$-modules write $\beta^{(\sigma)}:=\beta\otimes 1\colon M^{(\sigma)}\to N^{(\sigma)}$. If $f\colon M\to N$ is a $\sigma$-linear map of $R$-modules then
  \[
    M^{(\sigma)}\longrightarrow N,\quad m\otimes a\longmapsto af(m)
  \]
  is $R$-linear, and if $f$ is $\sigma^{-1}$-linear then
  \[
    M\longrightarrow N^{(\sigma)},\quad m\longmapsto f(m)\otimes 1  
  \]
  is $R$-linear. In both cases we call the resulting homomorphism the \emph{linearization} of $f$ and denote it by $f^{\mathrm{lin}}$.
  
  Now let $M_0$ be an $R_0$-module, and let $M=M_0\otimes_{R_0}R$. Then $\sigma$ and $\sigma^{-1}$ act on $M$ via $1\otimes\sigma$ and $1\otimes\sigma^{-1}$ respectively. Further, there is a canonical isomorphism 
  \[
    M=M_0\otimes_{R_0}R\stackrel{\sim}{\longrightarrow} M\otimes_{R_0}R\otimes_{R,\sigma}R=M^{(\sigma)},\quad m\otimes a\mapsto m\otimes 1\otimes a.
  \]
  We will often use this isomorphism to identify $M$ with $M^{(\sigma)}$. For example, if $f\colon M\to N$ is $\sigma$-linear, we also write $f^{\mathrm{lin}}\colon M\cong M^{(\sigma)}\to N$, with this notation we then have that $f=f^{\mathrm{lin}}\circ(1\otimes \sigma)$. 
  
  If $M_0$ is a finitely generated free $R_0$-module, then $\sigma$ also acts on $\GL(M)$ and on the group of cocharacters $\Hom_R(\bbG_{m,R},\GL(M))$. For $g\in\GL(M)$ we have $\sigma(g)=(1\otimes\sigma)\circ g\circ (1\otimes\sigma^{-1})$, and for a cocharacter $\lambda\colon\bbG_{m,R}\to\GL(M)$ we find that $\sigma(\lambda)(a)=\sigma(\lambda(a))$ for all $a\in R$.  
\end{blank}

\begin{blank}
\label{TensorPrep}
  Let $R$ be any ring. If $M$ is a finitely generated free module over $R$, we denote by $M^{\otimes}$ the direct sum of all $R$-modules that arise from $M$ by applying the operations of taking duals, tensor products, symmetric powers and exterior powers a finite number of times. An element of $M^{\otimes}$ will be called a \emph{tensor} over $M$. We have an obvious notion of base change for tensors. Let $M^*$ be the dual $R$-module of $M$. Since there is a canonical identificaton of $M^{\otimes}$ with $(M^*)^{\otimes}$ we can view tensors over $M$ as tensors over $M^*$ as well.
  
  Let $M$ and $M'$ be finitely generated free $R$-modules and let $s=(s_i)_{i\in I}$ and $s'=(s_i')_{i\in I}$ be families of tensors over $M$ and $M'$ respectively. Every isomorphism $f\colon M\to M'$ gives an isomorphism $(f^{-1})^*\colon (M)^*\to (M')^*$ and thus $f^{\otimes}\colon M^{\otimes}\to(M')^{\otimes}$. We will write $f\colon (M,s)\to (M',s')$ if and only if $f^{\otimes}$ takes $s_i$ to $s_i'$ for all $i\in I$. We say that a family of tensors $(s_i)_{i\in I}$ over $M$ \emph{defines} the subgroup $G\subseteq \GL(M)$ if
  \[
    G(R')=\{g\in\GL(M_{R'})\mid g^{\otimes}((s_i)_{R'})=(s_i)_{R'}\text{ for all }i\in I\}
  \]
  for every $R$-algebra $R'$. We have the contragredient representation
  \[
    (\cdot)^{\vee}\colon\GL(M)\longrightarrow\GL(M^*),\quad g\longmapsto g^{\vee}:=(g^{-1})^*,
  \]
  which is in fact an isomorphism of group schemes over $R$. Let $(s_i)_{i\in I}$ be a family of tensors over $M$, defining a subgroup $G\subseteq\GL(M)$. Then these tensors $(s_i)_{i\in I}$, when we consider them as tensors over $M^*$, define the subgroup $\{g^{\vee}\mid g\in G\}\subseteq\GL(M^*)$.
\end{blank}

\section{Shimura data of Hodge type and the tower of Shimura varieties}
\label{ShimVarSec}

Let $G$ be a connected reductive group over $\bbQ$ and let $X$ be a $G(\bbR)$-conjugacy class of algebraic morphisms $\bbS\to G_{\bbR}$ such that $(G,X)$ is a Shimura datum of Hodge type. By definition, this means that there is an embedding $(G,X)\hookrightarrow(\GSp(V,\psi),S^{\pm})$ into a symplectic Shimura datum, which we fix once and for all. We will often simply write $\GSp(V)$ for $\GSp(V,\psi)$ with the symplectic pairing implied.

The datum $(G,X)$ defines conjugacy classes of cocharacters for $G$ as follows: Every element $h\in X$ defines a Hodge decomposition $V_{\bbC}=V^{(-1,0)}\oplus V^{(0,-1)}$ via the embedding $X\hookrightarrow S^{\pm}$.
\begin{definition}
\label{CocharDef}
  \begin{enumerate}[(i)]
    \item We define $\nu_h$ to be the cocharacter of $G_{\bbC}$ such that $\nu_h(z)$ acts on $V^{(-1,0)}$ through multiplication by $z$ and on $V^{(0,-1)}$ as the identity. 
    \item We denote by $[\nu]$ the unique $G(\bbC)$-conjugacy class which contains all the cocharacters $\nu_h$, and by $[\nu^{-1}]$ the conjugacy class which contains the $\nu_h^{-1}$.
  \end{enumerate}
\end{definition}
The reflex field $E$ of $(G,X)$ is defined as the field of definition of $[\nu]$ (or equivalently of $[\nu^{-1}]$), this is known to be a finite extention of $\bbQ$.

We fix a prime number $p$ such that $G$ is of good reduction at $p$. Let $K_p\subseteq G(\Qp)$ be a hyperspecial subgroup. Consider subgroups of the type $K=K_pK^p\subseteq G(\bbA_f)$, where $K^p\subseteq G(\bbA_f^p)$ is open and compact. If $K^p$ is sufficiently small, then the double quotient
\[
  \Sh_K(G,X):=G(\bbQ)\setminus X\times G(\bbA_f)/K
\]
(where $G(\bbQ)$ acts diagonally and $K$ acts on the right factor) has a natural structure as a smooth quasi-projective variety over $\bbC$, and further this variety has a canonical model over $E$. In the sequel we will always view $\Sh_K(G,X)$ as an algebraic variety over $E$.\\

The projective limit
\[
  \Sh_{K_p}(G,X):=\varprojlim_{K^p} \Sh_{K_pK^p}(G,X),
\]
taken over the set of open and compact subgoups of $G(\bbA_f^p)$, carries a contiuous right action of $G(\bbA_f^p)$ in the sense of Deligne (see \cite{Mi1}, 2.1.): Elements $g\in G(\bbA_f^p)$ act by isomorphisms $\Sh_{K_p(gK^pg^{-1})}(G,X)\to\Sh_{K_pK^p}$ such that every $g\in K^p$ gives the identity map on $\Sh_{K_pK^p}(G,X)$, and such that for every normal subgroup $K'^p\subseteq K^p$ the natural covering map induces an isomorphism $\Sh_{K_pK'^p}(G,X)/(K^p/K'^p)\simeq\Sh_{K_pK^p}(G,X)$. In particular, we have an equality $\Sh_{K_pK^p}(G,X)=\Sh_{K_p}(G,X)/K^p$ for every open and compact $K^p\subseteq G(\bbA_f^p)$.\\

We fix a place $v$ of $E$ over $p$. The existence of the hyperspecial subgroup $K_p$ implies that $E$ is unramified at $p$ (\cite{Mi2}, 4.7.). Let $o_E$ be the ring of integers in $E$, and let $o_{E,(v)}$ be its localization at $v$. 

\begin{definition}[\cite{Mi1}, \S 2]
  \label{CanModelDef}
  An \emph{integral canonical model} of $\Sh_{K_p}(G,X)$ over $o_{E,(v)}$ is a projective system $\scrS_{K_p}(G,X)=\varprojlim_{K^p}\scrS_{K_pK^p}(G,X)$ of schemes over $o_{E,(v)}$, indexed by the set of open and compact subgroups of $G(\bbA_f^p)$, together with a continuous right action of $G(\bbA_f^p)$ such that:
  \begin{enumerate}[(i)]
    \item If $K^p$ is sufficiently small, then $\scrS_{K_pK^p}(G,X)$ is smooth over $o_{E,(v)}$ and $\scrS_{K_pK'^p}(G,X)\to \scrS_{K_pK^p}(G,X)$ is {\'e}tale for every $K'^p\subseteq K^p$.
    \item $\scrS_{K_p}(G,X)\otimes_{o_{E,(v)}} E$ is $G(\bbA_f^p)$-equivariantly isomorphic to $\Sh_{K_p}(G,X)$.
    \item Let $Y$ be a regular, formally smooth $o_{E,(v)}$-scheme. Then every morphism $Y\otimes_{o_{E,(v)}} E\to \scrS_{K_p}(G,X)\otimes_{o_{E,(v)}} E$ extends to a morphism $Y\to \scrS_{K_p}(G,X)$.
  \end{enumerate}
\end{definition}

Note that in the situation of (iii) the extension $Y\to \scrS_{K_p}(G,X)$ is automatically unique, since $Y$ is reduced and $Y\otimes_{o_{E,(v)}} E$ is dense in $Y$. Hence a model in the sense of Def. \ref{CanModelDef} is unique up to canonical isomorphism which justifies the name "canonical model". In \cite{Mi1} Milne conjectured that an integral canonical model of $\Sh_{K_p}(G,X)$ always exists (for a general Shimura datum, not necessarily of Hodge type), see also the treatment in (\cite{Mo1}, \S 3).

\begin{example}
  \label{ExPEL1}
  Consider a Shimura datum $(G,X)\hookrightarrow(\GSp(V,\psi),S^{\pm})$ of PEL-type (here $G$ is not connected in general): Let $(B,*)$ be a finite dimensional semi-simple $\bbQ$-algebra with involution which acts on $V$ such that $\psi(bv,w)=\psi(v,b^*w)$ for all $b\in B$ and all $v,w\in V$ and such that
  \[
    G(R)=\{g\in\GSp(V_R,\psi_R)\mid g(bv)=bg(v)\text{ for }b\in B_R, v\in V_R\}
  \]
  for any $\bbQ$-algebra $R$. Then $p$ is a prime of good reduction for $G$ if and only if $B_{\Qp}$ is unramified. In this case, it is shown in (\cite{Ko2}, \S 5 and \S 6) that a canonical integral model exists if $p\geq3$ or if $p=2$ and $G^{\mathrm{ad}}$ has no factor of Dynkin type $D$. The schemes $\scrS_{K_pK^p}(G,X)$ then have an explicit description as a moduli space of abelian schemes with additional structures over $o_{E,(v)}$. 
\end{example}

\section{Integral canonical models for Shimura varieties of Hodge type}

In this section we briefly describe the construction of the canonical integral model for $\Sh_{K_p}(G,X)$, following Kisin's proof in \cite{Ki1}, and introduce the objects which are fundamental for the study of the closed fiber which follows. In the case $p=2$ two restrictions arise in order for the construction to work.

\subsection{Construction of the integral models}
\label{IntModConstrSec}

Let $\calG$ be a reductive model of $G$ over $\bbZ_p$ such that $K_p=\calG(\bbZ_p)$. If $p=2$, we assume that $G^{\mathrm{ad}}$ has no factor of Dynkin type $B$. Then there is a lattice $\Lambda\subseteq V$ and a finite set of tensors $s:=(s_i)\subset\Lambda_{\bbZ_{(p)}}^{\otimes}$ such that $\calG$ is identified with the subgroup of $\GL(\Lambda_{\Zp})$ defined by $s_{\bbZ_p}\subset\Lambda_{\bbZ_p}^{\otimes}$ (\cite{Ki1}, 2.3.1., 2.3.2.) via our chosen embedding $G\hookrightarrow\GSp(V)$. Possibly passing to a homothetic lattice, we may and will further assume that the symplectic pairing $\psi$ on $V$ restricts to a pairing $\Lambda\times\Lambda\to\bbZ$. Note however that $\Lambda$ will not be self-dual with respect to $\psi$ in general. 

Let $\tilde{K}_p$ be the stabilizer of $\Lambda_{\bbZ_p}$ in $\GSp(V)(\bbQ_p)$. Then $K_p=\tilde{K}_p\cap G(\bbQ_p)$. Let $K^p\subseteq G(\bbA_f^p)$ be an open and compact subgroup such that $K:=K_pK^p$ leaves $\Lambda_{\hat{\bbZ}}$ stable (which is the case for all sufficiently small $K^p$). It can be shown that there is an open and compact subgroup  $\tilde{K}^p\subseteq\GSp(V)(\bbA_f^p)$ which contains $K^p$, such that $\tilde{K}:=\tilde{K}_p\tilde{K}^p$ also leaves $\Lambda_{\hat{\bbZ}}$ stable and such that the natural map $\Sh_K(G,X)\rightarrow\Sh_{\tilde{K}}(\GSp(V),S^{\pm})\otimes_{\bbQ}E$ is a closed embedding (\cite{Ki1}, 2.1.2., 2.3.2.). We call a subgroup $\tilde{K}^p$ with these properties \emph{admissible} for $K^p$. If $K'^p\subseteq K^p$ then there is an open and compact subgroup $\tilde{K}'^p\subseteq\tilde{K}^p$ which is admissible for $K'^p$ and we obtain a commutative diagram
\[
  \label{CoupleCD}
  \begin{xy}
  \xymatrix{
  \Sh_{K'}(G,X)\ar@^{(->}[r]\ar[d] & \Sh_{\tilde{K}'}(\GSp(V),S^{\pm})\otimes_{\bbQ}E\ar[d]\\
  \Sh_K(G,X)\ar@^{(->}[r]          & \Sh_{\tilde{K}}(\GSp(V), S^{\pm})\otimes_{\bbQ}E
  }
  \end{xy}
\]
where the horizontal arrows are closed embeddings.


\begin{construction}
\label{ModuliConstr}

We denote by $\Lambda'$ the dual lattice of $\Lambda$ with respect to $\psi$. Let $|\Lambda'/\Lambda|=d$, and let $\dim(V)=2n$. Let $K^p\subseteq G(\bbA_f^p)$ be an open and compact subgroup, and let $\tilde{K}^p$ be admissible for $K^p$. With respect to $\Lambda$, we consider the moduli space $\scrM_{n,d,\tilde{K}^p}$ over $\bbZ_{(p)}$ which parametrizes abelian schemes with a polarization of degree $d$ and a mod-$\tilde{K}^p$ level structure up to isomorphism (see \cite{Ki1}, 2.3.3.). By the classical result of Mumford, $\scrM_{n,d,\tilde{K}^p}$ is representable by a quasi-projective scheme over $\bbZ_{(p)}$ if $\tilde{K}^p$ is sufficiently small. 

Let again $\tilde{K}=\tilde{K}_p\tilde{K}^p$. Due to the moduli interpretation of Shimura varieties of Siegel type, there is an embedding 
\[
  \label{ModuliMap}
  \Sh_{\tilde{K}}(\GSp(V),S^{\pm})\hookrightarrow \scrM_{n,d,\tilde{K}^p}
\]
of $\bbZ_{(p)}$-schemes. We give a description of this map on $\bbC$-valued points, cf. (\cite{Va1}, 4.1.): Let 
\[
  [h,g]\in\Sh_{\tilde{K}}(\GSp(V),S^{\pm})(\bbC)=\GSp(\bbQ)\setminus S^{\pm}\times\GSp(\bbA^f)/\tilde{K}.
\]
Let $V_{\bbC}=V^{(-1,0)}\oplus V^{(0,-1)}$ be the Hodge decomposition induced by $h$. There is a unique $\bbZ$-lattice $\Lambda_g\subset V$ such that $(\Lambda_g)_{\hat{\bbZ}}=g(\Lambda_{\hat{\bbZ}})$ and a unique $\bbQ^{\times}$-multiple $\psi_{h,g}$ of $\psi$ such that $g(\Lambda'_{\hat{\bbZ}})$ is the dual lattice of $g(\Lambda_{\hat{\bbZ}})$ with respect to $\psi_{h,g}$ and such that the form $(v,w)\mapsto\psi_{h,g}(v,h(i)w)$ is positive definite on $V_{\bbR}$. Then $[h,g]$ is mapped to the isomorphism class of $(A, \lambda,\eta)$, where $A:=V^{(-1,0)}/\Lambda_g$, endowed with the polarization $\lambda$ induced by $\psi_{h,g}$, is the polarized complex abelian variety associated to $(V,\psi_{h,g},\Lambda_g,h)$ via Riemann's theorem (see \cite{De1}, 4.7.), and $\eta$ is the right $\tilde{K}^p$-coset of
\[
  \Lambda_{\hat{\bbZ}^p}\stackrel{g^p}{\longrightarrow}g^p(\Lambda_{\hat{\bbZ}^p})=(\Lambda_g)_{\hat{\bbZ}^p}\cong H_1(A,\bbZ)_{\hat{\bbZ}^p}\cong\prod_{l\neq p}T_l(A).
\]
\end{construction}

Recall that $v$ denotes a place of $E$ over $p$, and $o_{E,(v)}$ the localization of $o_E$ at $v$.

\begin{definition}
  \label{IntModelDef}
  Let $K^p\subseteq G(\bbA_f^p)$ be an open and compact subgroup, and let $\tilde{K}^p$ be admissible for $K^p$ such that $\scrM_{n,d,\tilde{K}^p}$ exists as a scheme. Let $K=K_pK^p$ and $\tilde{K}=\tilde{K}_p\tilde{K}^p$. We define $\scrS_K(G,X)$ as the normalization of the closure of $\Sh_K(G,X)$ in $\scrM_{n,d,\tilde{K}^p}\otimes_{\bbZ_{(p)}} o_{E,(v)}$ with respect to the embedding
  \[
    \Sh_K(G,X)\hookrightarrow\Sh_{\tilde{K}}(\GSp(V),S^{\pm})\otimes_{\bbQ}E\hookrightarrow \scrM_{n,d,\tilde{K}^p}\otimes_{\bbZ_{(p)}} o_{E,(v)}.
  \]
\end{definition}

\begin{remark}
\label{ModelIndepRem}
  This definition is indeed independent of the choice of $\tilde{K}^p$: Let $\tilde{K}'^p\subseteq\tilde{K}^p$ be an open and compact subgroup which contains $K^p$ (it is then automatically admissible for $K^p$), then the natural map $\scrM_{n,d,\tilde{K}'^p}\to\scrM_{n,d,\tilde{K}^p}$ is finite and there is a commutative diagram
  \[
    \begin{xy}
    \xymatrix{
      \Sh_K(G,X)\ar@^{(->}[r] \ar@^{(->}[rd] & \scrM_{n,d,\tilde{K}'^p}\otimes_{\bbZ_{(p)}}o_{E,(v)} \ar[d]\\
                                             & \scrM_{n,d,\tilde{K}^p}\otimes_{\bbZ_{(p)}}o_{E,(v)}\ .
    }
    \end{xy}
  \]
   Let $Z$ be a component of $\Sh_K(G,X)$, and denote by $\overline{Z}'$ and $\overline{Z}$ the closures in the $o_{E,(v)}$-schemes on the right hand side of the diagram respectively. The induced map $\overline{Z}'\to\overline{Z}$ is finite and dominant, and is an isomorphism at the generic points. Hence the corresponding map of the respective normalizations is an isomorphism.
\end{remark}

By definition, for every $K^p$ the choice of an admissible $\tilde{K}^p$ gives a natural map $\scrS_K(G,X)\to \scrM_{n,d,\tilde{K}}\otimes_{\bbZ_{(p)}} o_{E,(v)}$, this defines an abelian scheme over $\scrS_K(G,X)$ which is independent of the choice of $\tilde{K}^p$ up to isomorphism by the preceeding remark. If $K'^p\subseteq K^p$ then we have a natural map $\scrS_{K'^p,\tilde{K}'^p}(G,X)\to\scrS_{K^p,\tilde{K}^p}(G,X)$ which is obtained by the choice of suitable admissible subgroups $\tilde{K}'^p\subseteq\tilde{K}^p$.

\begin{theorem}[Kisin, \cite{Ki1} Thm. 2.3.8.]
  \label{CanModelThm}
  If $p=2$, assume that $G^{\mathrm{ad}}$ has no factor of Dynkin type $B$, and that, for each $K^p$, the dual of each abelian variety associated to a point on the special fiber of $\scrS_K(G,X)$ has a connected $p$-divisible group.\\
  Then the following hold:
  \begin{enumerate}[(i)]
    \item $\scrS_K(G,X)$ is a smooth $o_{E,(v)}$-scheme for each $K^p$.
    \item The projective limit $\scrS_{K_p}(G,X):=\varprojlim_{K^p}\scrS_K(G,X)$ is an integral canonical model of $\Sh_{K_p}(G,X)$ over $o_{E,(v)}$ in the sense of Def. \ref{CanModelDef}.
  \end{enumerate}
\end{theorem}

In particular, $\scrS_{K^p}(G,X)$ and hence also $\scrS_K(G,X)=\scrS_{K_p}(G,X)/K^p$ (for $K^p$ sufficiently small) does not depend on the choice of the embedding $(G,X)\hookrightarrow(\GSp(V),S^{\pm})$, nor on the choices made during the construcion.

\subsection{Tensors on the de Rham cohomology}
\label{dRTensorSec}

Although in general we do not have an interpretation of the integral models $\scrS_K(G,X)$ as moduli spaces of abelian schemes with additional structures, each model is by construction naturally endowed with an abelian scheme on it, and the tensors $s$ which define the group $\calG$ induce tensors on de Rham cohomology of this abelian scheme. In this subsection we desribe the construction of these tensors and their relation to $s$, still following \cite{Ki1}. 

We will systematically consider the tensors $s\subset\Lambda_{\bbZ_{(p)}}^{\otimes}$ chosen in the last subsection as tensors over $\Lambda^*_{\bbZ_{(p)}}$ and use the contragredient representation 
\[
  (\cdot)^{\vee}\colon\GL(\Lambda)\stackrel{\sim}{\longrightarrow}\GL(\Lambda^*),
\]
as discussed in Section \ref{TensorPrep}. 

\begin{notation}
In the sequel we will work with a fixed model $\scrS:=\scrS_K(G,X)$ associated to some sufficiently small subgroup $K^p\subseteq G(\bbA_f^p)$ as in Def. \ref{IntModelDef}. We fix an open compact $\tilde{K}^p\subseteq \GSp(V)(\bbA_f^p)$ which is admissible for $K^p$ in the sense of the last subsection. Note that all the constructions below are in fact \emph{independent} of the choice of $\tilde{K}^p$. In the case $p=2$ we assume that the assumptions of Theorem \ref{CanModelThm} hold, so that $\scrS$ is smooth. 

Let $\calA\stackrel{\pi}{\longrightarrow}\scrS$ be the abelian scheme defined by the natural map $\scrS\to\scrM_{n,d,\tilde{K}^p}\otimes_{\bbZ_{(p)}}\calO_{E,(v)}$. Let 
\[
  \calV^{\circ}:=H_{\mathrm{dR}}^1(\calA/\scrS)\quad \text{and}\quad \calV:=H_{\mathrm{dR}}^1(\calA\otimes E/\Sh_K(G,X)).
\]
Then $\calV^{\circ}$ and $\calV$ are locally free modules over $\scrS$ and $\scrS\otimes E=\Sh_K(G,X)$ respectively, and $\calV=\calV^{\circ}\otimes E$. Let $\nabla$ denote the Gau{\ss}-Manin connection on $\calV^{\circ}$ resp. $\calV$. It is known that the Hodge spectral sequence $E_1^{p,q}=R^q\pi_*(\Omega^p_{\calA/\scrS})\ \Longrightarrow\ H^{p+q}_{\mathrm{dR}}(\calA/\scrS)$ degenerates at $E_1$ (\cite{BBM}, 2.5.2.), giving rise to a filtration
\[
  \calV^{\circ}=H_{\mathrm{dR}}^1(\calA/\scrS)\supset\pi_*\Omega^1_{\calA/\scrS}=:\Fil^1\calV^{\circ},
\]
the \emph{Hodge filtration} on $\calV^{\circ}$.
\end{notation}

Let $E'|E$ be any field extension which admits an embedding into $\bbC$, and let $\xi\in\scrS(E')$. Let $\overline{E'}$ be an algebraic closure of $E'$, choose an embedding $\overline{E'}\hookrightarrow\bbC$. We denote by $\bar{\xi}$ and $\xi_{\bbC}$ the $\overline{E'}$-valued and $\bbC$-valued points corresponding to $\xi$. From the embedding $\Sh_K(G,X)\hookrightarrow\scrM_{n,d,\tilde{K}^p}$ used in Constr. \ref{ModuliConstr} we get a natural isomorphism $V\simeq H_1(\calA_{\xi_{\bbC}},\bbQ)$. The dual of this isomorphism maps $s_{\bbQ}\subset(V^*)^{\otimes}$ to a set of tensors over $H^1(\calA_{\xi_{\bbC}},\bbQ)$, and using the comparison isomorphisms
\[
  H^1(\calA_{\xi_{\bbC}},\bbQ)_{\bbC}\cong H^1_{\mathrm{dR}}(\calA_{\xi_{\bbC}}/\bbC),\quad H^1(\calA_{\xi_{\bbC}},\bbQ)_{\bbQ_l}\cong H^1_{\acute{e}t}(\calA_{\xi_{\bbC}},\bbQ_l)\cong H^1_{\acute{e}t}(\calA_{\bar{\xi}},\bbQ_l)
\]
we obtain tensors $s_{\mathrm{dR},\xi}$ on the algebraic de Rham cohomology of $\calA_{\xi_{\bbC}}$ and $s_{\acute{e}t,l,\xi}$ on the $l$-adic {\'e}tale cohomology of $\calA_{\bar{\xi}}$ for every prime number $l$. By a result of Deligne (\cite{De2}, 2.11.), the family $(s_{\mathrm{dR},\xi}, (s_{\acute{e}t,l,\xi}))$ is an absolute Hodge cycle (see loc.cit. \S 2 for the definition of Hodge cycles and absolute Hodge cycles).

\begin{proposition}[\cite{Ki1}, 2.2.1., 2.2.2.]
  \label{dRgeneric}
  \begin{enumerate}[(i)]
    \item For every $\xi\in\scrS(E')$ as above the tensors $s_{\mathrm{dR},\xi}$ are defined over $H^1_{\mathrm{dR}}(\calA_{\xi}/E')$ and the tensors $s_{\acute{e}t,l,\xi}$ are $\mathrm{Gal}(\overline{E'}|E')$-invariant for each $l$.
    \item There exist global sections $s_{\mathrm{dR}}\subset\calV^{\otimes}$ defined over $E$, which are horizontal with respect to the Gau{\ss}-Manin connection $\nabla$, such that the pullback of $s_{\mathrm{dR}}$ to any $\xi\in\scrS(E')$ as above equals the tensors $s_{\mathrm{dR},\xi}\subset(H^1_{\mathrm{dR}}(\calA_{\xi}/E'))^{\otimes}$.
  \end{enumerate}
\end{proposition}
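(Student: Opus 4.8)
The statement is attributed to Kisin (\cite{Ki1}, 2.2.1 and 2.2.2), so the plan is essentially to reproduce the argument from loc.\ cit.\ in the language set up above. The key input is Deligne's theorem (\cite{De2}, 2.11), already cited in the excerpt, which says that for an abelian variety arising from a Hodge-type Shimura datum the Hodge cycle $(s_{\mathrm{dR},\xi},(s_{\acute et,l,\xi}))$ cut out by $s$ is an \emph{absolute} Hodge cycle. The point of part (i) is to convert the a priori analytic statements (that $s_{\mathrm{dR},\xi}$ lives in de Rham cohomology over $\bbC$, that $s_{\acute et,l,\xi}$ is $\mathrm{Gal}(\overline{E'}|\bbC)$-trivial) into statements over the field $E'$ of definition of the point.

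\emph{Part (i).} First I would treat the $l$-adic tensors. Fix $\xi\in\scrS(E')$. The comparison $H^1(\calA_{\xi_\bbC},\bbQ)_{\bbQ_l}\cong H^1_{\acute et}(\calA_{\bar\xi},\bbQ_l)$ together with the fact that the étale cohomology of $\calA_{\bar\xi}$ already carries a continuous action of $\mathrm{Gal}(\overline{E'}|E')$ reduces the claim to showing that $s_{\acute et,l,\xi}$ is fixed by this larger Galois group, not just by the geometric part. This is exactly the content of the ``absolute Hodge'' property: the class is defined over the reflex-type field and its Galois conjugates agree with the corresponding Hodge cycles on the conjugate varieties, which over a field admitting an embedding into $\bbC$ forces $\mathrm{Gal}(\overline{E'}|E')$-invariance (cf.\ \cite{De2}, \S2; this is the step where one uses that $\calA$ is of abelian type and that the tensor comes from the Shimura datum, not a random cohomology class). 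For the de Rham tensors, the absolute-Hodge property says precisely that under the de Rham--Betti comparison over $\bbC$ the tensor $s_{\mathrm{dR},\xi}$ is the base change of a class defined over $E'$; concretely, one uses that $H^1_{\mathrm{dR}}(\calA_{\xi_\bbC}/\bbC)=H^1_{\mathrm{dR}}(\calA_\xi/E')\otimes_{E'}\bbC$ and that a tensor in the left-hand space which is ``absolutely Hodge'' descends to the $E'$-form. So (i) is a direct unpacking of Deligne's theorem applied to the universal family.

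\emph{Part (ii).} Here the plan is to globalize. Apply (i) to the generic point $\eta$ of each connected component of $\Sh_K(G,X)$, whose residue field $E'=\kappa(\eta)$ is a finitely generated extension of $\bbQ$ which (after choosing an embedding of an algebraic closure into $\bbC$) satisfies the hypotheses of (i). This produces tensors $s_{\mathrm{dR}}\subset(\calV\otimes_{\mathcal O_{\Sh}}\kappa(\eta))^{\otimes}$ over the generic point. To spread these out to global sections over $\Sh_K(G,X)$, one uses that $\calV^{\otimes}$ is a locally free sheaf with an integrable connection $\nabla$ on a smooth $E$-variety: the subsheaf of $\nabla$-horizontal sections is a local system (in the analytic or pro-étale sense), so a horizontal section defined at the generic point extends uniquely to a horizontal global section — equivalently, one observes that $s_{\mathrm{dR}}$ at the generic point is invariant under monodromy (because it comes from the $\pi_1$-invariant Betti tensor $s_{\bbQ}\subset H^1(\calA_{\xi_\bbC},\bbQ)^\otimes$, which is monodromy-invariant by construction of the Shimura variety) and hence defines a global horizontal section. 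Horizontality with respect to $\nabla$ is then automatic, and the compatibility with the pointwise $s_{\mathrm{dR},\xi}$ for arbitrary $\xi\in\scrS(E')$ follows because both sides are obtained from the same Betti tensor $s_{\bbQ}$ via the comparison isomorphisms, which are functorial in the point.

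\emph{Main obstacle.} The genuinely hard part is not in the excerpt's reach at all: it is Deligne's theorem that these particular cycles are absolutely Hodge, which rests on the full theory of absolute Hodge cycles on abelian varieties (the principle ``Hodge implies absolutely Hodge for abelian varieties''). Granting that, the only real work left is the descent-and-spread-out bookkeeping in part (ii): checking that the horizontal sections constructed at generic points patch to honest global sections of $\calV^{\otimes}$ over all of $\Sh_K(G,X)$ and that they restrict correctly at every (not necessarily generic) point. This is where I would be most careful — in particular, that ``horizontal at the generic point'' plus ``monodromy-invariant'' really does yield a global section, which uses smoothness of $\Sh_K(G,X)$ over $E$ and the Riemann--Hilbert correspondence for the flat bundle $(\calV,\nabla)$.
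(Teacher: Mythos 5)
The paper itself does not prove Proposition~\ref{dRgeneric}; it is stated as an imported result with the citation \cite{Ki1}, 2.2.1 and 2.2.2, and the construction that follows simply uses it as a black box. So there is no in-paper argument to compare against word for word, and the relevant question is whether your reconstruction of Kisin's argument is sound.

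Your outline is essentially the right one and matches the structure of Kisin's proof. For part (i) the real content is indeed Deligne's theorem (\cite{De2}, 2.11) that Hodge cycles on abelian varieties are absolute; once one knows that $(s_{\mathrm{dR},\xi},(s_{\acute et,l,\xi}))$ is absolutely Hodge, the $\mathrm{Gal}(\overline{E'}|E')$-invariance of the $\ell$-adic components and the descent of the de Rham component to $H^1_{\mathrm{dR}}(\calA_\xi/E')$ are exactly what the definition of ``absolutely Hodge'' plus the comparison isomorphism over $\bbC$ give you, just as you say. For part (ii) the key observation that $s_{\mathrm{dR}}$ is the image under Riemann--Hilbert of the $\pi_1$-invariant (because $G$-invariant) Betti tensor $s_{\bbQ}$, hence extends to a global $\nabla$-horizontal algebraic section over $\bbC$, is also correct and is the heart of the matter. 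The one place you should be a bit more explicit is the final descent from $\bbC$ down to $E$: ``defined at the generic point over $\kappa(\eta)$'' by itself does not immediately yield an $E$-rational global section. The cleanest way to close this is to note that the global horizontal section over $\bbC$ lies in $H^0(\Sh_{K,E},\calV^\otimes)\otimes_E\bbC$, and by part (i) applied to sufficiently many $\xi\in\Sh_K(E')$ with $E'/E$ finite its specializations are $E'$-rational and compatible with Galois conjugation on the canonical model, which forces the section to lie in the $E$-rational subspace $H^0(\Sh_{K,E},\calV^\otimes)$. With that gap filled, the argument is a faithful account of the proof in \cite{Ki1}, and you correctly identify Deligne's absolute-Hodge theorem as the ingredient that this paper is not in a position to reprove.
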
  

The extension of the tensors $s_{\mathrm{dR}}$ to sections of $(\calV^{\circ})^{\otimes}$ relies on the following pointwise construction: Let $k$ be a perfect field of finite trancendence degree over $\bbF_p$. Let $W(k)$ be the Witt ring over $k$ and let $L(k):=\mathrm{Frac}(W(k))$. We consider a triple $(\tilde{x},\xi,x)$, where $\tilde{x}$ is a $W(k)$-valued point $\scrS$ and $\xi\in\scrS(L(k))$, $x\in\scrS(k)$ are the corresponding induced points.

Let $\bbD_x$ be the contravariant Dieudonn{\'e} module of the $p$-divisible group of $\calA_x$. Recall that $\bbD_x$ is a free $W(k)$-module together with a $\sigma$-linear map $F$ and a $\sigma^{-1}$-linear map $V$ such that $FV=p=VF$. We have canonical isomorphisms 
\[
  H^1_{\mathrm{dR}}(\calA_{\tilde{x}}/W(k))\cong H^1_{\mathrm{cris}}(\calA_x/W(k))\cong\bbD_x.
\]
By our assumption on $k$, the field $L(k)$ can be embedded into $\bbC$. The choice of an embedding $\overline{L(k)}\hookrightarrow\bbC$ hence yields an absolute Hodge cycle $(s_{\mathrm{dR},\xi}, (s_{\acute{e}t,l,\xi}))$ as above. 

On the other hand, there is also an isomorphism  
\[
  \Lambda_{\Zp}\stackrel{\sim}{\longrightarrow}H_1(\calA_{\xi_{\bbC}},\bbZ)_{\Zp}\cong T_p(\calA_{\xi_{\bbC}})\cong T_p(\calA_{\bar{\xi}}):
\]
With the notations of Constr. \ref{ModuliConstr}, if the $\bbC$-valued point $\xi_{\bbC}$ corresponds to the element $[h,g]\in\Sh_{\tilde{K}}(\GSp(V),S^{\pm})(\bbC)$, then the first arrow is given by $g_p$. Dualizing this isomorphism, and paying respect to the $\mathrm{Gal}(\overline{L(k)}|L(k))$-operation on the right hand side, yields
\[
  \Lambda^*_{\Zp}\simeq T(\calA_{\bar{\xi}})^*(-1)\cong H_{\acute{e}t}^1(\calA_{\bar{\xi}},\Zp),
\]
which sends the tensors $s_{\Zp}\subset(\Lambda^*_{\Zp})^{\otimes}$ to tensors $s_{\acute{e}t,\xi}^{\circ}$ over $H_{\acute{e}t}^1(\calA_{\bar{\xi}},\Zp)$. Since all the isomorphisms involved are compatible, the base change of $s_{\acute{e}t,\xi}^{\circ}$ to tensors over $H_{\acute{e}t}^1(\calA_{\bar{\xi}},\Qp)$ is exactly the $p$-adic component $s_{\acute{e}t,p,\xi}$ of the absolute Hodge cycle defined above. So Prop. \ref{dRgeneric}(i) implies that the $s_{\acute{e}t,\xi}^{\circ}$ are invariant under the action of $\mathrm{Gal}(\overline{L(k)}|L(k))$. Now it follows from Kisin's theory of crystalline representations and $\frkS$-modules that the images of these tensors under the $p$-adic comparison isomorphism
\[
  H_{\acute{e}t}^1(\calA_{\bar{\xi}},\Zp)\otimes_{\Zp}B_{\mathrm{cris}}\stackrel{\sim}{\longrightarrow} H^1_{\mathrm{cris}}(\calA_x/W(k))\otimes_{W(k)}B_{\mathrm{cris}} \cong \bbD_x\otimes_{W(k)}B_{\mathrm{cris}}
\]
are $F$-invariant and are already defined over $\bbD_x$ (\cite{Ki1}, 1.3.6.(1), 1.4.3.(1)). Using the identification $\bbD_x\cong H_{\mathrm{dR}}^1(\calA_{\tilde{x}}/W(k))$ we thus obtain tensors $s_{\mathrm{dR},\tilde{x}}^{\circ}$ over $\calV^{\circ}_{\tilde{x}}$.

\begin{proposition}
  \label{dRExtProp}
  \begin{enumerate}[(i)]
    \item The tensors $s_{\mathrm{dR}}$ of Prop. \ref{dRgeneric} extend (uniquely) to global sections $s_{\mathrm{dR}}^{\circ}\subset(\calV^{\circ})^{\otimes}$ which are horizontal with respect to $\nabla$.
    \item Let $(\tilde{x},\xi,x)$ be a triple as considered above. Then the tensors $s_{\mathrm{dR},\tilde{x}}^{\circ}\subset(\calV^{\circ}_{\tilde{x}})^{\otimes}$ which we obtained via the $p$-adic comparison isomorphism in the above construction are equal to the pullback of $s_{\mathrm{dR}}^{\circ}$ to $\tilde{x}$. 
    \item In the situation of (ii), assume in addition that $k$ is finite or algebraically closed. Then there is a $W(k)$-linear isomorphism
    \[
      (\Lambda^*_{W(k)},s_{W(k)})\stackrel{\sim}{\longrightarrow}(\calV_{\tilde{x}}^{\circ},s_{\mathrm{dR},\tilde{x}}^{\circ})
    \]
    Further, if $\beta$ is any such isomorphism, then there is a cocharacter $\lambda$ of $\calG_{W(k)}$ such that the filtration $\Lambda^*_{W(k)}\supset\beta^{-1}(\Fil^1\calV_{\tilde{x}}^{\circ})$ is induced by $(\cdot)^{\vee}\circ\lambda$.
  \end{enumerate}
\end{proposition}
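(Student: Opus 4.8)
The plan is to establish (i) and (ii) simultaneously, then each assertion of (iii) in turn.

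\emph{Parts (i) and (ii).} I would first exploit that $\scrS$ is regular with reduced special fibre (being smooth over $o_{E,(v)}$), so that $\scrS\otimes E$ is dense open in $\scrS$ with complement of pure codimension $1$, and that $(\calV^{\circ})^{\otimes}$ is locally free, hence reflexive. Thus a section of $(\calV^{\circ})^{\otimes}$ over $\scrS$ is determined by its restriction to $\scrS\otimes E$, and $s_{\mathrm{dR}}$ extends to a section $s_{\mathrm{dR}}^{\circ}$ over $\scrS$ (necessarily unique, and $\nabla$-horizontal since $\nabla(s_{\mathrm{dR}}^{\circ})$ vanishes on the dense open $\scrS\otimes E$ inside a torsion-free sheaf) as soon as it extends over the local ring $\calO_{\scrS,\eta}$ at each generic point $\eta$ of the special fibre. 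Each such $\calO_{\scrS,\eta}$ is a discrete valuation ring with uniformiser $p$ and residue field $\kappa(\eta)$ of finite transcendence degree over $\Fp$; passing to the complete local ring with residue field $\kappa(\eta)^{\mathrm{perf}}$ (still of finite transcendence degree, and perfect, so its fraction field embeds into $\bbC$) puts one exactly in the setting of the pointwise construction preceding the proposition, which produces the extension over that ring, and this descends back to $\calO_{\scrS,\eta}$ because the ring map is faithfully flat, the relevant modules are $p$-torsion-free, and the extension is pinned down by its restriction to the generic fibre. This is (i). For (ii), both the pullback of $s_{\mathrm{dR}}^{\circ}$ to $\tilde x$ and the tensor $s_{\mathrm{dR},\tilde x}^{\circ}$ obtained from the $p$-adic comparison isomorphism lie in the free $W(k)$-module $(\calV_{\tilde x}^{\circ})^{\otimes}=\bbD_x^{\otimes}$ and have the same image $s_{\mathrm{dR},\xi}$ in $(\bbD_x\otimes_{W(k)}L(k))^{\otimes}$ — the former by construction of $s_{\mathrm{dR}}^{\circ}$ together with Prop.\ \ref{dRgeneric}, the latter by the compatibility of the de Rham and $p$-adic comparison isomorphisms recalled before the proposition — so they coincide since $\bbD_x^{\otimes}$ injects into its base change.

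\emph{Part (iii), the isomorphism.} Here I would invoke Kisin's theory of Breuil--Kisin modules with $\calG$-structure (\cite{Ki1}, 1.3.\ and 1.4.3; see also \cite{Ki2}, \S 1): the Breuil--Kisin module $\frkM$ of $\calA_x[p^{\infty}]$ over $\frkS=W(k)[[u]]$ carries tensors $s_{\frkM}$ with $(\frkM,s_{\frkM})\simeq(\Lambda^*_{\frkS},s_{\frkS})$, and reducing modulo $u$ identifies this with $(\bbD_x,s_{\mathrm{cris},x})\simeq(\calV_{\tilde x}^{\circ},s_{\mathrm{dR},\tilde x}^{\circ})$. (Alternatively: once one knows that the $s_{\mathrm{dR},\tilde x}^{\circ}$ cut out a reductive subgroup of $\GL(\calV_{\tilde x}^{\circ})$ isomorphic to $\calG_{W(k)}$ and that such a tensor-isomorphism exists fppf-locally on $W(k)$, the scheme of tensor-isomorphisms is a $\calG_{W(k)}$-torsor, which is trivial because $W(k)$ is henselian with finite or algebraically closed residue field.)

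\emph{Part (iii), the cocharacter.} Given such a $\beta$, set $\Fil:=\beta^{-1}(\Fil^1\calV_{\tilde x}^{\circ})$, a rank-$n$ direct summand of $\Lambda^*_{W(k)}$. Over $L(k)$, comparing $\beta$ with the Betti comparison isomorphism (which is also a tensor-isomorphism $(\Lambda^*,s)\simeq(H^1_{\mathrm{dR}}(\calA_{\xi}),s_{\mathrm{dR},\xi})$) shows that $\Fil\otimes L(k)$ is, as $(G,X)$ is of Hodge type and by the shape of the Hodge decomposition in Definition \ref{CocharDef}, the filtration induced by $(\cdot)^{\vee}\circ\mu_0$ for some cocharacter $\mu_0$ of $\calG_{L(k)}$ in the conjugacy class $[\nu^{-1}]$; in particular $\mathrm{Stab}_{\calG_{L(k)}}(\Fil\otimes L(k))$ is a parabolic subgroup of a type $\tau$ (Galois-invariant since $[\nu^{-1}]$ is defined over $E$) independent of $x$. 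Since $\calG$ is quasi-split over $\bbZ_p$ it has a standard parabolic $Q$ of type $\tau$, hence $\calG_{W(k)}/Q$ is projective over $W(k)$, so by the valuative criterion the $L(k)$-point of $\calG_{L(k)}/Q_{L(k)}$ given by $\Fil\otimes L(k)$ extends to a $W(k)$-point, whose image under the closed immersion $\calG_{W(k)}/Q\hookrightarrow\mathrm{Grass}_n(\Lambda^*_{W(k)})$ agrees with $\Fil$ generically and hence equals $\Fil$. Thus $\Fil$ lies in the $\calG_{W(k)}$-orbit of the standard flag and $P:=\mathrm{Stab}_{\calG_{W(k)}}(\Fil)$ is a parabolic subgroup of type $\tau$. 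Choosing a Levi $M\subseteq P$ over $W(k)$, the summand $\Fil$ is $M$-stable; over $L(k)$ one conjugates $\mu_0$ into $M_{L(k)}$, where it is a cocharacter of the connected centre, and since for a torus over the henselian $W(k)$ the cocharacter group is unchanged by passing to $L(k)$, this yields a cocharacter $\lambda$ of $\calG_{W(k)}$ whose $(\cdot)^{\vee}$-twisted weight filtration on $\Lambda^*_{W(k)}$ has a step which is a direct summand agreeing with $\Fil$ over $L(k)$, hence equal to $\Fil$.

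\emph{Main obstacle.} The analytically serious input — Kisin's extension of the de Rham tensors across the special fibre and the Breuil--Kisin comparison underlying the isomorphism in (iii) — is imported. The genuine work is the globalisation in (i) (the reflexivity/codimension-one reduction and the descent along $\kappa(\eta)\hookrightarrow\kappa(\eta)^{\mathrm{perf}}$) and, above all, the second half of (iii): one must propagate the Hodge-type shape of the filtration from $L(k)$ to all of $W(k)$, and the crux is that $\mathrm{Stab}_{\calG_{W(k)}}(\Fil)$ is not merely a subgroup scheme with parabolic generic fibre but an honest parabolic subgroup of $\calG_{W(k)}$, which comes out of the properness of the partial flag variety; granting that, producing $\lambda$ is a routine descent of a central cocharacter of a Levi.
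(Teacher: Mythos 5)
Your proposal takes a genuinely different and more self-contained route than the paper, which is almost entirely by citation of \cite{Ki1}: part (i) to the proof of Kisin's Thm.\ 2.3.9, the first half of (iii) to Kisin's Prop.\ 1.4.3(2)+(3), and the second half of (iii) to the proof of Kisin's Prop.\ 1.4.3(4). Your reflexivity/codimension-one reduction for (i) and especially your independent derivation of the cocharacter in (iii) — propagating the Hodge-type shape of the filtration from $L(k)$ to $W(k)$ by observing that the stabilizer in $\calG_{W(k)}$ is an honest parabolic, via the valuative criterion on $\calG_{W(k)}/Q \hookrightarrow \mathrm{Grass}$ and triviality of $Q$-torsors over the henselian base — are legitimate alternatives with real content. (For the latter, you should say explicitly that the $W(k)$-point of $\calG_{W(k)}/Q$ lifts to $\calG(W(k))$, which uses Hensel's lemma plus Lang's theorem when $k$ is finite; this is implicit but worth naming.)

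There is, however, a genuine gap in your treatment of (ii). You reduce the claim to showing that the pullback of $s_{\mathrm{dR}}^{\circ}$ and the tensor $s_{\mathrm{dR},\tilde x}^{\circ}$ both have image $s_{\mathrm{dR},\xi}$ in $(\bbD_x\otimes_{W(k)}L(k))^{\otimes}$, and you dispose of the latter ``by the compatibility of the de Rham and $p$-adic comparison isomorphisms recalled before the proposition.'' Nothing of the sort was recalled: what the construction before the proposition establishes (via Kisin, \cite{Ki1}\ 1.3.6, 1.4.3(1)) is only that the image of $s_{\acute{e}t,\xi}^{\circ}$ under the crystalline comparison is $F$-invariant and integral, i.e.\ lies in $\bbD_x$. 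The identification of that image with $s_{\mathrm{dR},\xi}$ over $L(k)$ is precisely the assertion that the $p$-adic comparison isomorphism carries the $p$-adic \'etale component of the absolute Hodge cycle $(s_{\mathrm{dR},\xi},(s_{\acute{e}t,l,\xi}))$ to its de Rham component. Since the \'etale and de Rham realizations of a Hodge cycle are defined by independent routes through the Betti realization, this is a nontrivial theorem: it is Blasius--Wintenberger (\cite{Bl1}, 0.3) in the number-field case, and Vasiu's extension (\cite{Va1}, 5.2.16) in the generality needed here. Your reduction is correct and is the same one the paper performs, but the crucial input must be named; as written, the key step of (ii) is assumed rather than proved.
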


\begin{proof}
  (i) The existence of $s_{\mathrm{dR}}^{\circ}$ is shown in the proof of (\cite{Ki1}, 2.3.9.). These extensions are automatically unique, since $\scrS$ is in particular an integral scheme and $\calV^{\circ}$ is locally free. By the same reasoning it follows that the $s_{\mathrm{dR}}^{\circ}$ are horizontal with respect to $\nabla$, as they are so over $\scrS\otimes E$.\\
  (ii) If $x$ is a closed point of $\scrS$, then this is immediately clear from the definition of $s_{\mathrm{dR}}^{\circ}$ in (\cite{Ki1}, 2.3.9.). In general, as the equality of tensors in $(\calV^{\circ}_{\tilde{x}})^{\otimes}$ may be tested over $\xi$, the statement amounts to the fact that the $p$-adic comparison isomorphism $H_{\acute{e}t}^1(\calA_{\bar{\xi}},\Qp)\otimes_{\Qp}B_{\mathrm{dR}}\simeq H^1_{\mathrm{dR}}(\calA_{\xi}/L(k)) \otimes_{L(k)}B_{\mathrm{dR}}$ maps the $p$-adic {\'e}tale component of the absolute Hodge cycle $(s_{\mathrm{dR},\xi}, (s_{\acute{e}t,l,\xi}))$ to its de Rham component. If $\calA_{\xi}$ can be defined over a number field, this is a theorem of Blasius and Wintenberger (\cite{Bl1}, 0.3.), and Vasiu (\cite{Va1}, 5.2.16.) observed that their result can also be extended to our more general situation.\\
  (iii) Let $\widetilde{\bbD}$ be the contravariant crystal of the $p$-divisible group $\calA_{\tilde{x}}[p^{\infty}]$ over $W(k)$. Then we have the natural identification
  \[
    \widetilde{\bbD}(W(k))\cong\bbD_x\cong \calV_{\tilde{x}}^{\circ}
  \]
  which is compatible with the Hodge filtrations on both sides, and by (ii) the tensors $s_{\mathrm{dR},\tilde{x}}^{\circ}$ get identified with the images of $s_{\acute{e}t,\xi}^{\circ}$ under the $p$-adic comparison isomorphism. So the first statement of (iii) follows directly from (\cite{Ki1}, 1.4.3. (2)+(3)), applied to the $p$-divisible group $\calA_{\tilde{x}}[p^{\infty}]$ and the tensors $s_{\acute{e}t,\xi}^{\circ}$. Likewise, the proof of (4) in loc.cit. (which proves more than what is claimed) shows that the filtration $\Lambda^*_{W(k)}\supset\beta^{-1}(\Fil^1\calV_{\tilde{x}}^{\circ})$ is induced by a cocharacter of the subgroup of $\GL(\Lambda^*_{W(k)})$ which is defined by the tensors $s_{W(k)}\subseteq(\Lambda^*_{W(k)})^{\otimes}$. As this subgroup is exactly the image of $\calG_{W(k)}$ under $(\cdot)^{\vee}$, the last claim follows.
\end{proof}

We remark that the existence of the tensors $s_{\mathrm{dR}}^{\circ}$ is closely related to the proof of Theorem \ref{CanModelThm}: In fact, the proof of the smoothness of $\scrS$ in (\cite{Ki1}, 2.3.5.) uses a variant of Prop. \ref{dRExtProp}(iii) as a main ingredience, and in turn the arguments given in that proof allow the construction of $s_{\mathrm{dR}}^{\circ}$ in (\cite{Ki1}, 2.3.9.).

\begin{corollary}
  \label{CrysTensors}
  Let $x\in\scrS(k)$, where $k$ is either a finite extension of $\Fp$ or algebraically closed of finite transcendence degree over $\Fp$, and let $\bbD_x$ be the contravariant Dieudonne module of the $p$-divisible group $\calA_x[p^{\infty}]$. Let $\tilde{x}\in\scrS(W(k))$ be a lift of $x$.\\
  Then the images $s_{\mathrm{cris},x}\subset(\bbD_x)^{\otimes}$ of $s_{\mathrm{dR},\tilde{x}}^{\circ}$ via the identification $H_{\mathrm{dR}}^1(\calA_{\tilde{x}}/W(k))\cong\bbD_x$ are independent of the choice of $\tilde{x}$. Further, the tensors $s_{\mathrm{cris},x}$ are $F$-invariant, and there is a $W(k)$-linear isomorphism $(\Lambda_{W(k)}^*,s_{W(k)})\simeq(\bbD_x,s_{\mathrm{cris},x})$.
\end{corollary}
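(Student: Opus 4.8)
The plan is to derive the Corollary from Proposition~\ref{dRExtProp} together with the crystalline interpretation of the Gau{\ss}-Manin connection.

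First I would record that lifts $\tilde{x}\in\scrS(W(k))$ exist. Since $x$ is a point of characteristic $p$ it factors through $\scrS\otimes\kappa(v)$, so $k$ is a $\kappa(v)$-algebra and hence $W(k)$ is naturally an $o_{E,(v)}$-algebra via $o_{E,(v)}\hookrightarrow\widehat{o_{E,(v)}}=W(\kappa(v))\to W(k)$. As $\scrS$ is smooth over $o_{E,(v)}$, we may successively lift $x$ over the square-zero surjections $W(k)/p^{n+1}\twoheadrightarrow W(k)/p^{n}$; these lifts all factor through a fixed affine open containing $x$, and passing to the limit yields $\tilde{x}$.

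For a fixed choice of $\tilde{x}$, the assertions about $F$-invariance and the isomorphism with $(\Lambda^{*}_{W(k)},s_{W(k)})$ are essentially immediate from the results already at hand. By Proposition~\ref{dRExtProp}(ii) the pulled-back tensors $\tilde{x}^{*}s_{\mathrm{dR}}^{\circ}=s_{\mathrm{dR},\tilde{x}}^{\circ}$ coincide with the tensors obtained by transporting $s_{\acute{e}t,\xi}^{\circ}$ through the $p$-adic comparison isomorphism; by the cited results of Kisin (\cite{Ki1}, 1.3.6., 1.4.3.) these lie in $\bbD_{x}$ and are $F$-invariant, so under $H_{\mathrm{dR}}^{1}(\calA_{\tilde{x}}/W(k))\cong\bbD_{x}$ the tensors $s_{\mathrm{cris},x}$ are $F$-invariant. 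The desired isomorphism is then the composite of the isomorphism of Proposition~\ref{dRExtProp}(iii) (whose hypotheses on $k$ are exactly those of the Corollary) with the identification $\calV_{\tilde{x}}^{\circ}=H_{\mathrm{dR}}^{1}(\calA_{\tilde{x}}/W(k))\cong\bbD_{x}$.

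The remaining, and essential, point is independence of the choice of $\tilde{x}$, and here the key input is that $s_{\mathrm{dR}}^{\circ}$ is horizontal for $\nabla$. After $p$-adic completion, $(\calV^{\circ},\nabla)$ is the value on the PD-thickening $\scrS\otimes\kappa(v)\hookrightarrow\widehat{\scrS}$ of the relative crystalline cohomology crystal $R^{1}f_{\mathrm{cris},*}\mathcal{O}$ of $\calA$, and a $\nabla$-horizontal global section such as $s_{\mathrm{dR}}^{\circ}$ is nothing but a global section of the corresponding tensor crystal. Given two lifts $\tilde{x}_{1},\tilde{x}_{2}$ of $x$, the datum $\bigl(\Spec k\xrightarrow{x}\scrS\otimes\kappa(v),\ \Spec k\hookrightarrow\Spec W(k)\bigr)$ defines a single object of the crystalline site, and each $\tilde{x}_{i}$ realizes it as lying over $(\scrS\otimes\kappa(v)\hookrightarrow\widehat{\scrS})$; the crystal axiom therefore identifies $\tilde{x}_{i}^{*}s_{\mathrm{dR}}^{\circ}$, viewed inside $\bbD_{x}=H_{\mathrm{cris}}^{1}(\calA_{x}/W(k))$ via the canonical comparison $H_{\mathrm{dR}}^{1}(\calA_{\tilde{x}_{i}}/W(k))\cong\bbD_{x}$, with one and the same element, namely the value of that global crystal section at this object. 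Concretely, the statement to verify is that the stratification on $\calV^{\circ}$ coming from $\nabla$ over the $p$-adically completed PD-envelope of the diagonal of $\scrS\times_{o_{E,(v)}}\scrS$, pulled back along $(\tilde{x}_{1},\tilde{x}_{2})$ (which makes sense because $\tilde{x}_{1}\equiv\tilde{x}_{2}\bmod p$), induces the composite of the two crystalline comparisons $H_{\mathrm{dR}}^{1}(\calA_{\tilde{x}_{2}}/W(k))\cong\bbD_{x}\cong H_{\mathrm{dR}}^{1}(\calA_{\tilde{x}_{1}}/W(k))$, and that horizontality of $s_{\mathrm{dR}}^{\circ}$ then forces $\tilde{x}_{2}^{*}s_{\mathrm{dR}}^{\circ}$ and $\tilde{x}_{1}^{*}s_{\mathrm{dR}}^{\circ}$ to correspond under it. This is the main obstacle: it is a standard consequence of the Berthelot--Ogus comparison isomorphism, but it requires some care with the bookkeeping of PD structures (with the usual mild extra attention when $p=2$, harmless under the standing assumptions). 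Once it is established, $s_{\mathrm{cris},x}$ does not depend on $\tilde{x}$ and the Corollary follows.
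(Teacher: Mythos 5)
Your proof is correct and follows essentially the same route as the paper: the paper's own proof is the single sentence ``This follows immediately from (i) and (ii) of the last proposition,'' and your argument is precisely the unpacking of that assertion — horizontality of $s_{\mathrm{dR}}^{\circ}$ (Prop.~\ref{dRExtProp}(i)) makes it a section of the associated crystal, so that via Berthelot--Ogus the pullbacks along any two lifts agree inside $\bbD_x^{\otimes}$, while (ii) identifies $s_{\mathrm{dR},\tilde{x}}^{\circ}$ with the pullback of this global section, and (iii) supplies the $W(k)$-linear isomorphism (one small point: the last assertion of the Corollary really uses part (iii), not only (i) and (ii), so the paper's reference is a slight slip that you quietly correct).
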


\begin{proof}
  This follows immediately from (i) and (ii) of the last proposition.
\end{proof}

\begin{remark}
  \label{PELRem2}
  In the special case of a PEL-type Shimura variety it is known that one can choose the lattice $\Lambda\subset V$ in a way such that $\Lambda_{\Zp}$ is selfdual with respect to $\psi$ and such that there is a maximal order $o_B$ of $(B_{\Qp},*)$ which acts on $\Lambda_{\Zp}$, and the tensors $s\subset\Lambda_{\Zp}^{\otimes}$ then encode the action of $o_B$ on $\Lambda_{\Zp}$. Due to the interpretation of $\scrS$ as a moduli space of abelian schemes with additional structure, for every $x\in\scrS(k)$ as in Cor. \ref{CrysTensors} there is an action of $o_B$ on the $p$-divisible group $\calA_x[p^{\infty}]$. So in this case Cor. \ref{CrysTensors} is an analogon to the results in (\cite{VW}, \S2) on $p$-divisible groups with PEL-structure. Note however that the authors use covariant Dieudonn{\'e} theory in that article.
\end{remark}

\section{Stratifications of the special fiber}

  Let $\kappa(v)$ be the residue class field of $\calO_{E,(v)}$, and let $\Fbar$ be a fixed algebraic closure of $\Fp$. In this section we define the Newton stratification and the Ekedahl-Oort stratification on the special fiber $\scrS\otimes\kappa(v)$ of $\scrS$ resp. on $\scrS\otimes\Fbar$. These stratifications arise by considering the isocristals resp. Dieudonn{\'e} spaces associated to $\calA_x$ for points $x$ as in Cor. \ref{CrysTensors}, while paying respect to the tensor structure. Just as in the Siegel case and the PEL case, the stratifications are parametrized by combinatorial data which only depends on the Shimura datum $(G,X)$.\\
  
  We start by introducing some group theoretic notions: The reductive group scheme $\calG$ over $\Zp$ is quasisplit and split over a finite {\'e}tale extension of $\Zp$. We fix a Borel subgroup $\calB\subseteq \calG$ and a maximal torus $\calT\subseteq \calB$ which are both defined over $\Zp$. Let $(X^*(\calT),\Phi,X_*(\calT),\Phi^{\vee})$ be the root datum associated to $(\calG,\calT)$ over $\calO$, and let $W$ be the associated Weyl group. The choice of $\calB$ determines a set $\Phi^+\subset\Phi$ of positive roots and a set $S\subset W$ of simple reflections which give $(W,S)$ the structure of a finite Coxeter group. As usual, we call a cocharacter $\lambda\in X_*(\calT)$ dominant, if $\langle \alpha,\lambda\rangle\geq0$ for all $\alpha\in\Phi^+$ (here $\langle\cdot,\cdot\rangle$ is the natural pairing between $X^*(\calT)$ and $X_*(\calT)$). The group $W$ naturally acts on $X_*(\calT)$, and the dominant cocharacters form a full set of representatives for the orbits $W\setminus X_*(\calT)$.
  
  For any local, strictly henselian $\Zp$-algebra $R$ we have a realization of this data with respect to $\calG_R$. In particular $W\cong N_{\calG}(\calT)(R)/\calT(R)$ and the inclusion $X_*(\calT)\cong\Hom_R(\bbG_{m,R},\calT_R)\subseteq\Hom_R(\bbG_{m,R},\calG_R)$ induces a bijection between the quotient $W\setminus X_*(\calT)$ and the set of conjugacy classes of cocharacters for $\calG_R$. If $R\to R'$ is a homomorphism of local and strictly henselian $\Zp$-algebras, then base change to $R'$ yields a bijection between the sets of conjugacy classes of cocharacters for $\calG_R$ and $\calG_{R'}$.\\
  Putting $R=\bbC$, we see that the conjugacy class $[\nu^{-1}]$ from Def. \ref{CocharDef} determines an element of $W\setminus X_*(\calT)$. On the other hand, putting $R=W(k)$ for some algebraically closed field $k$ of characteristic $p$, we obtain an action of the Frobenius $\sigma$ on $X_*(\calT)$, $W$ and $\Phi$. Since $\calB$ and $\calT$ are defined over $\Zp$, this action leaves $S$, $\Phi^+$ and the set of dominant cocharacters stable. 

\begin{definition}
  \label{muDef}
  We define $\mu\in X_*(\calT)$ as the unique dominant element such that $\sigma^{-1}(\mu)\in[\nu^{-1}]$.
\end{definition}

\subsection{The Dieudonn{\'e} module at a geometric point}
  \label{StratPrepSec}
  
  We will use the notations and considerations of Section \ref{sigmaPrep}, and especially apply them to the case that $M_0=\Lambda_{\Zp}$ or $M_0=\Lambda_{\Fp}$. Recall that we use the contragredient representation $(\cdot)^{\vee}\colon \GL(\Lambda)\to\GL(\Lambda^*)$ to let $\calG(R)$ act on $\Lambda^*_R$. 

\begin{lemma}
  \label{CocharLem1}
  Let $k$ be a perfect field of finite transcendence degree over $\Fp$, let $\bar{k}$ be an algebraic closure of $k$. Let $\tilde{x}\in\scrS(W(k))$, and suppose that there exists an isomorphism 
  \[
    \beta\colon(\Lambda^*_{W(k)},s_{W(k)})\stackrel{\sim}{\longrightarrow}(\calV_{\tilde{x}}^{\circ},s_{\mathrm{dR},\tilde{x}}^{\circ}).
  \]
  If $\lambda$ is a cocharacter of $\calG_{W(k)}$ which such that $(\cdot)^{\vee}\circ\lambda$ induces on $\Lambda^*_{W(k)}$ the filtration $\Lambda^*_{W(k)}\supset\beta^{-1}(\Fil^1\calV_{\tilde{x}}^{\circ})$, then $\lambda_{W(\bar{k})}\in[\nu^{-1}]$.
\end{lemma}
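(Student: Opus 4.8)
The plan is to descend the statement to $\bbC$, where the Hodge filtration on the de Rham cohomology of the universal abelian scheme is exactly the one coming from the complex Hodge structures parametrised by $X$, and then to apply a rigidity statement for cocharacters inducing a fixed filtration on a faithful representation.

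For the descent I would fix an embedding $\overline{L(k)}\hookrightarrow\bbC$ and let $\xi\in\scrS(L(k))$, $\xi_{\bbC}\in\scrS(\bbC)$ be the points induced by $\tilde x$, with underlying $h\in X$. Base change of de Rham cohomology identifies $\calV_{\tilde x}^{\circ}\otimes_{W(k)}\bbC$ with $H^1_{\mathrm{dR}}(\calA_{\xi_{\bbC}}/\bbC)$ compatibly with Hodge filtrations, so $\beta$ base-changes to an isomorphism $\beta_{\bbC}\colon(\Lambda^*_{\bbC},s)\xrightarrow{\sim}(H^1_{\mathrm{dR}}(\calA_{\xi_{\bbC}}/\bbC),s_{\mathrm{dR},\xi})$ carrying $\beta^{-1}(\Fil^1\calV_{\tilde x}^{\circ})\otimes\bbC$ onto $\Fil^1 H^1_{\mathrm{dR}}(\calA_{\xi_{\bbC}}/\bbC)$. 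On the other hand, by the very construction of $s_{\mathrm{dR}}$ in Prop.\ \ref{dRgeneric} the Betti comparison isomorphism together with the dual of $V\cong H_1(\calA_{\xi_{\bbC}},\bbQ)$ gives an isomorphism $c\colon(V^*_{\bbC},s)\xrightarrow{\sim}(H^1_{\mathrm{dR}}(\calA_{\xi_{\bbC}}/\bbC),s_{\mathrm{dR},\xi})$ under which $\Fil^1 H^1_{\mathrm{dR}}$ is the Hodge filtration $(V^{(-1,0)})^{*}\subseteq V^*$ attached to $h$; a routine check with the conventions of Constr.\ \ref{ModuliConstr} shows this filtration is induced by the cocharacter $(\cdot)^{\vee}\circ\nu_h^{-1}$ of $\GL(\Lambda^*_{\bbC})$ (the appearance of the inverse is precisely the duality phenomenon behind Def.\ \ref{muDef}). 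Since $\Lambda^*_{\bbC}=V^*_{\bbC}$ canonically and, by Section \ref{TensorPrep}, $s$ defines the subgroup $\{g^{\vee}\mid g\in\calG\}\subseteq\GL(\Lambda^*)$, the automorphism $c^{-1}\circ\beta_{\bbC}$ of $(\Lambda^*_{\bbC},s)$ equals $g^{\vee}$ for a unique $g\in\calG(\bbC)$, and therefore $\beta^{-1}(\Fil^1\calV_{\tilde x}^{\circ})\otimes\bbC$ is the filtration of $\Lambda^*_{\bbC}$ induced by $(\cdot)^{\vee}\circ(g^{-1}\nu_h g)^{-1}$, with $g^{-1}\nu_h g\in[\nu]$.

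Comparing with the hypothesis, the two cocharacters $(\cdot)^{\vee}\circ\lambda_{\bbC}$ and $(\cdot)^{\vee}\circ(g^{-1}\nu_h g)^{-1}$ of $\GL(\Lambda^*_{\bbC})$, both of which factor through the faithful representation $(\cdot)^{\vee}$ of $\calG_{\bbC}$, induce the same filtration on $\Lambda^*_{\bbC}$. Here I would invoke the standard fact that for a connected reductive group $H$ over an algebraically closed field, a faithful representation $\rho\colon H\hookrightarrow\GL(M)$, and cocharacters $\mu_1,\mu_2$ of $H$ with $\rho\circ\mu_1$ and $\rho\circ\mu_2$ inducing the same filtration on $M$, the cocharacters $\mu_1$ and $\mu_2$ are $H$-conjugate: they define the same parabolic $P=\rho^{-1}(\mathrm{Stab}_{\GL(M)}(\Fil^{\bullet}))$, their images in the Levi quotient $P/U_P$ coincide since $P/U_P$ acts faithfully on $\mathrm{gr}^{\bullet}M$ and both act there by the prescribed weights, and any two lifts of a cocharacter along $P\to P/U_P$ differ by conjugation by an element of $U_P$. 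Applying this with $H=\calG_{\bbC}$ yields that $\lambda_{\bbC}$ is $\calG(\bbC)$-conjugate to $(g^{-1}\nu_h g)^{-1}$, hence $\lambda_{\bbC}\in[\nu^{-1}]$. Finally, by the discussion preceding Def.\ \ref{muDef}, conjugacy classes of cocharacters of $\calG$ over a strictly henselian $\bbZ_p$-algebra, in particular over $W(\bar k)$ and over $\bbC$, are all identified with $W\backslash X_*(\calT)$ compatibly with base change; since $\lambda_{W(\bar k)}$ and $\lambda_{\bbC}$ are both obtained from $\lambda$ by base change, they correspond to the same element, which we have just shown to be the one determined by $[\nu^{-1}]$. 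This gives $\lambda_{W(\bar k)}\in[\nu^{-1}]$.

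The step requiring the most care is the descent: one must verify that base-changing $\beta$ to $\bbC$ is genuinely compatible with the $p$-adic and Betti comparison isomorphisms through which $s_{\mathrm{dR}}^{\circ}$ was produced from $s$, so that $c^{-1}\circ\beta_{\bbC}$ indeed preserves $s$, and one must follow the contragredient bookkeeping carefully so as to land in $[\nu^{-1}]$ rather than in $[\nu]$. Independence of the chosen embedding $\overline{L(k)}\hookrightarrow\bbC$ is then automatic, as everything in sight computes the same, base-change-invariant, element of $W\backslash X_*(\calT)$.
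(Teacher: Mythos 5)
Your proof follows essentially the same route as the paper's: base change to $\bbC$ via an embedding of $\overline{L(k)}$, compare $\beta_{\bbC}$ against the Riemann/Betti isomorphism (the paper's $\alpha_{\bbC}$, your $c$) to produce an element of $\calG(\bbC)$ conjugating the two cocharacters into a common filtration, compute that $(\cdot)^{\vee}\circ\nu_h^{-1}$ induces the Hodge filtration on $V^*_{\bbC}$, and finish with the rigidity of cocharacters inducing a given filtration. The only cosmetic difference is in the last step, where the paper conjugates both cocharacters into a common maximal torus of the parabolic $P$ while you compare images in the Levi quotient and lift along $P\to P/U_P$; these are equivalent formulations of the same standard fact.
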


\begin{proof}
  Let $\xi\in\scrS(L(k))$ be the generic point of $\tilde{x}$. Let $\overline{L(k)}$ be an algebraic closure of $L(k)$, then we have $W(\bar{k})\hookrightarrow\overline{L(k)}$. So if we choose an embedding $\overline{L(k)}\hookrightarrow\bbC$, it suffices to show that $\lambda_{\bbC}\in[\nu^{-1}]$.
  
  Let $A:=\calA_{\xi_{\bbC}}$. There is a pair $(h,g)\in X\times G(\bbA_f)$ such that
  \[
    \xi_{\bbC}=[h,g]\in\Sh_K(G,X)=G(\bbQ)\setminus X\times G(\bbA_f)/K.
  \]
  If $V_{\bbC}=V^{(-1,0)}\oplus V^{(0,-1)}$ is the Hodge decomposition given by $h$ then, using the notation from Constr. \ref{ModuliConstr}, we have $A\simeq V^{(-1,0)}/\Lambda_g$, and in turn there is an isomorphism $H_1(A,\bbC)\simeq(\Lambda_g)_{\bbC} =V_{\bbC}$. It follows from the construction of the Riemann correspondence for complex abelian varieties that the dual isomorphism
  \[
    \alpha_{\bbC}\colon V^*_{\bbC}\stackrel{\sim}{\longrightarrow}H^1(A,\bbC)\cong H^1_{\mathrm{dR}}(A/\bbC)=\calV_{\xi_{\bbC}}
  \]
  identifies the Hodge decomposition $H^1_{\mathrm{dR}}(A/\bbC)=H^{(1,0)}\oplus H^{(0,1)}$ with the decomposition $V^*_{\bbC}=(V^{(-1,0)})^*\oplus (V^{(0,-1)})^*$ (see e.g. \cite{Mi3}, 6.10., 7.5.), and a direct computation shows that the cocharacter $(\cdot)^{\vee}\circ\nu_h^{-1}$ (with $\nu_h$ as in Def. \ref{CocharDef}) acts on $(V^{(-1,0)})^*$ with weight $1$ and on $(V^{(0,-1)})^*$ with weight $0$, in other words,
  \[
    (\nu_h(z)^{-1})^{\vee}|_{(V^{(-1,0)})^*}=z,\qquad (\nu_h(z)^{-1})^{\vee}|_{(V^{(0,-1)})^*}=1.
  \]
  This means that $(\cdot)^{\vee}\circ\nu_h^{-1}$ induces on $V^*_{\bbC}$ the filtration   
  \[
    V^*_{\bbC}\supset(V^{(-1,0)})^*=\alpha_{\bbC}^{-1}(H^{(1,0)})=\alpha_{\bbC}^{-1}(\Fil^1\calV_{\xi_{\bbC}}),
  \] 
  and further by construction of $s_{\mathrm{dR}}$ the isomorphism $\alpha_{\bbC}$ identifies $s_{\bbC}$ with $s_{\mathrm{dR},\xi_{\bbC}}$. 
  
  Now the isomorphism $\alpha_{\bbC}^{-1}\circ\beta_{\bbC}\colon V^*_{\bbC}\to V^*_{\bbC}$ fixes the tensors $s_{\bbC}$, which means that $\alpha_{\bbC}^{-1}\circ\beta_{\bbC}=g_{\bbC}^{\vee}$ for some $g_{\bbC}\in G(\bbC)$. Note that we have $g_{\bbC}^{\vee}(\beta_{\bbC}^{-1}(\Fil^1\calV_{\xi_{\bbC}}))=\alpha_{\bbC}^{-1}(\Fil^1\calV_{\xi_{\bbC}})$. Conjugating $\lambda_{\bbC}$ with $g_{\bbC}$, we may therefore assume that $(\cdot)^{\vee}\circ\lambda_{\bbC}$ and $(\cdot)^{\vee}\circ\nu_h^{-1}$ both induce the same filtration on $V^*_{\bbC}$. Let $P$ be the stabilizer of this filtration in $G_{\bbC}$, that is, the subgroup of all $\tilde{g}\in G_{\bbC}$ such that $\tilde{g}^{\vee}$ leaves the filtration stable. Then $P\subseteq G_{\bbC}$ is a parabolic subgroup, and both $\lambda_{\bbC}$ and $\nu_h^{-1}$ factor via $P$. Since all maximal tori of $P$ are conjugate over $\bbC$, after conjugation by an element of $P(\bbC)$ we may further assume that both cocharacters factor via the same (automatically split) maximal torus of $P$. But this implies that $\lambda_{\bbC}$ and $\nu_h^{-1}$ also induce (via $(\cdot)^{\vee}$) the same grading on $V^*_{\bbC}$, and hence that they are equal.
\end{proof}

\begin{construction}
  \label{LinConstr}
  Let $k$ be algebraically closed of finite transcendence degree over $\bbF_p$. Let $x\in\scrS(k)$. By Cor. \ref{CrysTensors} the tensors $s_{\mathrm{dR}}^{\circ}$ induce $F$-invariant tensors $s_{\mathrm{cris},x}\subset\bbD_x^{\otimes}$, and we find an isomorphism $\beta\colon (\Lambda^*_{W(k)},s_{W(k)})\stackrel{\sim}{\rightarrow}(\bbD_x,s_{\mathrm{cris},x})$. To $\beta$ we attach an element $g_{\beta}\in G(L(k))$ as follows:
  
   Transporting $F$ via $\beta$, we obtain an injective $\sigma$-linear map $F_{\beta}:=\beta^{-1}\circ F\circ\beta$ on $\Lambda^*_{W(k)}=(\Lambda^*_{\Zp})\otimes_{\Zp}W(k)$. We can write it uniquely as $F_{\beta}=F_{\beta}^{\mathrm{lin}}\circ (1\otimes\sigma)$, where $F_{\beta}^{\mathrm{lin}}$ is by definition an automorphism of $\Lambda^*_{L(k)}$ which fixes the tensors $s_{L(k)}$. Therefore we have $F_{\beta}^{\mathrm{lin}}=g_{\beta}^{\vee}$ for a unique element $g_{\beta}$ of $G(L(k))$.\\
  We can also summarize the construction differently: For each $\beta$ the associated $g_{\beta}\in G(L(k))$ is the unique element such that the diagram
  \[
    \begin{xy}
      \xymatrix{
        \bbD_x^{(\sigma)}\ar[rr]^{F^{\mathrm{lin}}} & & \bbD_x\\
        \Lambda^*_{W(k)}\ar[rr]^{g_{\beta}^{\vee}} \ar[u]^{\beta^{(\sigma)}} & & \Lambda^*_{W(k)}\ar[u]_{\beta}
      }
    \end{xy}
  \]
  commutes (here we make the usual identification $(\Lambda^*_{W(k)})^{(\sigma)}\cong\Lambda^*_{W(k)}$).
\end{construction}

\begin{lemma}
  \label{LinearizationLem}
  Let $k$ be as in Constr. \ref{LinConstr}.
  \begin{enumerate}[(i)]
    \item Let $x\in\scrS(k)$, let $\beta\colon(\Lambda^*_{W(k)},s_{W(k)})\stackrel{\sim}{\longrightarrow}(\bbD_x,s_{\mathrm{cris},x})$ be an isomorphism. Then the isomorphisms between $(\Lambda^*_{W(k)},s_{W(k)})$ and $(\bbD_x,s_{\mathrm{cris},x})$, are exactly the ones of the form $\beta'=\beta\circ h^{\vee}$ for $h\in\calG(W(k))$, further in this case $h$ is uniquely determined and we have $g_{\beta'}=h^{-1}g_{\beta}\sigma(h)$.
    \item For every $x\in\scrS(k)$ and for every isomorphism $\beta\colon (\Lambda^*_{W(k)},s_{W(k)})\stackrel{\sim}{\longrightarrow}(\bbD_x,s_{\mathrm{cris},x})$ we have that $g_{\beta}\in\calG(W(k))\mu(p)\calG(W(k))$.
    
  \end{enumerate}
\end{lemma}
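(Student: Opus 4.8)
The plan is to settle (i) by a direct computation with the contragredient representation, and then reduce (ii) to exhibiting a single convenient isomorphism $\beta$, the key input being the relation between the crystalline Frobenius of the lifted $p$-divisible group and its Hodge filtration.

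\textbf{Part (i).} If $\beta,\beta'\colon(\Lambda^*_{W(k)},s_{W(k)})\to(\bbD_x,s_{\mathrm{cris},x})$ are two such isomorphisms, then $\beta^{-1}\circ\beta'$ is an automorphism of $\Lambda^*_{W(k)}$ fixing $s_{W(k)}$. Since the tensors $s$, viewed over $\Lambda^*$, define the subgroup $(\cdot)^\vee(\calG)\subseteq\GL(\Lambda^*)$ (Section \ref{TensorPrep}) and $(\cdot)^\vee$ is an isomorphism of group schemes, there is a unique $h\in\calG(W(k))$ with $\beta^{-1}\circ\beta'=h^\vee$, i.e. $\beta'=\beta\circ h^\vee$; conversely each $\beta\circ h^\vee$ is an isomorphism of pairs. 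For the transformation of $g_\beta$ one substitutes this into Construction \ref{LinConstr}: from $F_{\beta'}=(h^{-1})^\vee\circ F_\beta\circ h^\vee$, $F_\beta=g_\beta^\vee\circ(1\otimes\sigma)$, and $(1\otimes\sigma)\circ h^\vee=\sigma(h)^\vee\circ(1\otimes\sigma)$ (as $\sigma$ acts on $\GL(\Lambda^*_{W(k)})$ by conjugation with $1\otimes\sigma$, commutes with $(\cdot)^\vee$, and preserves $\calG(W(k))$), together with the multiplicativity of $(\cdot)^\vee$, one reads off $F_{\beta'}^{\mathrm{lin}}=(h^{-1}g_\beta\sigma(h))^\vee$, hence $g_{\beta'}=h^{-1}g_\beta\sigma(h)$.

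\textbf{Part (ii).} By (i) the element $g_\beta$ is, up to $\sigma$-conjugation by $\calG(W(k))$, independent of $\beta$, and $\calG(W(k))\mu(p)\calG(W(k))$ is visibly stable under such $\sigma$-conjugation; so it suffices to find one $\beta$ with $g_\beta\in\calG(W(k))\mu(p)\calG(W(k))$. Fix a lift $\tilde{x}\in\scrS(W(k))$ of $x$ and choose $\beta\colon(\Lambda^*_{W(k)},s_{W(k)})\to(\calV^\circ_{\tilde{x}},s^\circ_{\mathrm{dR},\tilde{x}})\cong(\bbD_x,s_{\mathrm{cris},x})$ as in Proposition \ref{dRExtProp}(iii); let $\lambda$ be a cocharacter of $\calG_{W(k)}$ such that $\Fil^1_\beta:=\beta^{-1}(\Fil^1\calV^\circ_{\tilde{x}})$ is the filtration induced by $\tilde\lambda:=(\cdot)^\vee\circ\lambda$. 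By Lemma \ref{CocharLem1} and the description of conjugacy classes of cocharacters of $\calG_{W(k)}$, $\lambda$ is $\calG(W(k))$-conjugate to $\sigma^{-1}(\mu)$ (Definition \ref{muDef}), hence $\sigma(\lambda)$ is $\calG(W(k))$-conjugate to $\mu$. Writing $\Lambda^*_{W(k)}=N_0\oplus N_1$ for the weight decomposition under $\tilde\lambda$, we have $\Fil^1_\beta=N_1$, and $\tilde\lambda(p)$ acts as the identity on $N_0$ and as $p$ on $N_1$. The heart of the argument is the following ``window'' relation, which transports to $\Lambda^*_{W(k)}$ the compatibility of $\calV^\circ_{\tilde{x}}\cong\bbD_x$ with the Hodge filtrations together with the display structure of the Dieudonn\'e module of $\calA_{\tilde{x}}[p^\infty]$ over $W(k)$, where $F_\beta:=\beta^{-1}\circ F\circ\beta$:
\[
  F_\beta(\Fil^1_\beta)\subseteq p\Lambda^*_{W(k)}\qquad\text{and}\qquad F_\beta(N_0)+\tfrac1p F_\beta(\Fil^1_\beta)=\Lambda^*_{W(k)}.
\]
The first inclusion reflects that in contravariant Dieudonn\'e theory the reduction of $\Fil^1$ modulo $p$ is the kernel of $F\bmod p$; the second is the surjectivity of the divided Frobenius, i.e. the theory of displays/windows of $p$-divisible groups over $W(k)$ (this is also where the hypotheses on $p=2$ in Theorem \ref{CanModelThm} enter). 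Together they say exactly that the $\sigma$-linear map $\Psi:=F_\beta\circ\tilde\lambda(p)^{-1}$ carries $\Lambda^*_{W(k)}$ bijectively onto itself, so $\Psi^{\mathrm{lin}}\in\GL(\Lambda^*)(W(k))$. Moreover $\Psi$ fixes $s_{L(k)}$, since $F_\beta$ does and $\tilde\lambda(p)^{-1}=(\cdot)^\vee(\lambda(p^{-1}))$ with $\lambda(p^{-1})\in\calG(L(k))$; hence $\Psi^{\mathrm{lin}}$ fixes $s_{W(k)}$, and as $(\cdot)^\vee(\calG)$ is the closed subgroup of $\GL(\Lambda^*)$ cut out by $s$ we get $\Psi^{\mathrm{lin}}=u^\vee$ for some $u\in\calG(W(k))$. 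Finally, from $F_\beta=\Psi\circ\tilde\lambda(p)=\Psi^{\mathrm{lin}}\circ(1\otimes\sigma)\circ\tilde\lambda(p)$ and $(1\otimes\sigma)\circ\tilde\lambda(p)=((\cdot)^\vee\circ\sigma(\lambda))(p)\circ(1\otimes\sigma)$ (again using that $\sigma$ fixes $p$ and commutes with $(\cdot)^\vee$) one obtains $g_\beta^\vee=F_\beta^{\mathrm{lin}}=(u\cdot\sigma(\lambda)(p))^\vee$, hence $g_\beta=u\cdot\sigma(\lambda)(p)$, which lies in $\calG(W(k))\mu(p)\calG(W(k))$ because $\sigma(\lambda)$ is $\calG(W(k))$-conjugate to $\mu$.

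\textbf{Expected obstacle.} The routine content is the bookkeeping with the contragredient representation, $\sigma$-linearizations, and the identification $(M_0\otimes R)^{(\sigma)}\cong M_0\otimes R$. The delicate step is the window relation displayed above: one must know that the Hodge filtration on $\calV^\circ_{\tilde{x}}$, transported to $\bbD_x$, is precisely the one controlling the divided Frobenius of $\calA_{\tilde{x}}[p^\infty]$, and must invoke the surjectivity of that divided Frobenius. It is exactly this relation that converts the Hodge-theoretic fact ``$\Fil^1_\beta$ is of type $\sigma^{-1}(\mu)$'' into the group-theoretic conclusion, the Frobenius twist $\sigma^{-1}(\mu)\rightsquigarrow\mu$ being produced by the $\sigma$-linearity of $F$; this is also why $\mu$ was defined via $\sigma^{-1}(\mu)\in[\nu^{-1}]$ rather than directly.
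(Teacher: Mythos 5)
Your proof of part (i) is correct; the paper omits the computation with a "this is clear."

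For part (ii), you take a genuinely different route from the paper. The paper first invokes the Cartan decomposition to write $g_\beta\in\calG(W(k))\eta(p)\calG(W(k))$ for some dominant $\eta$, uses part (i) to reduce to $g_\beta\in\calG(W(k))\eta(p)$, deduces from the inclusions $p\Lambda^*_{W(k)}\subseteq\im(F_\beta)\subseteq\Lambda^*_{W(k)}$ (coming from $F_\beta V_\beta=p$) that $\eta$ is minuscule, and then compares the filtrations induced by $\overline{\sigma^{-1}(\eta)}$ and $\bar\lambda$ modulo $p$ using Oda's theorem and the parabolic argument from Lemma \ref{CocharLem1}. Only the mod-$p$ relation $\Fil^1\calV^\circ_x=\ker(\overline{F})$ and the fact $FV=p$ are needed. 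Your argument instead constructs the factorization $g_\beta=u\,\sigma(\lambda)(p)$ directly, and for this you need the full "window relation" $F_\beta(N_0)+\tfrac1p F_\beta(N_1)=\Lambda^*_{W(k)}$, i.e.\ the surjectivity of the divided Frobenius of the lifted $p$-divisible group. This is a stronger integral input — it is the admissibility of the Hodge filtration of a lift over $W(k)$ in the sense of Zink's displays/windows, not just Oda's mod-$p$ statement — and it is something the paper manages to avoid entirely. Your approach buys you an explicit element $u\in\calG(W(k))$ and the exact factorization, and it makes transparent why the Frobenius twist $\sigma^{-1}(\mu)\rightsquigarrow\mu$ occurs; the paper's approach is more elementary (Cartan decomposition plus a mod-$p$ comparison) and sidesteps the question of whether the display theory you invoke applies cleanly, which matters especially at $p=2$ where Zink's theory is delicate and Kisin's hypotheses are tuned to a different part of the argument. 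You should either cite a precise reference for the admissibility of the filtration from $\tilde{x}$ (e.g.\ (\cite{Ki1}, 1.4.3) in the form it is used there, or Zink's window formalism), or note that the conclusion can also be reached by the Cartan-decomposition argument which requires only Oda's theorem.
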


\begin{proof}
  (i) This is clear.
  
  (ii) By the Cartan decomposition for $G(L(k))$ (see \cite{Ti1}, 3.3.3.) we know that $g_{\beta}$ lies in a double coset $\calG(W(k))\eta(p)\calG(W(k))$ for a unique dominant cocharacter $\eta\in X_*(\calT)$. By (i) we may further w.l.o.g. replace $\beta$ by $\beta\circ h^{\vee}$ for a suitable $h\in\calG(W(k))$ to achieve that $g_{\beta}\in\calG(W(k))\eta(p)$. In order to show that $\eta=\mu$, it suffices to check that the base change of $\sigma^{-1}(\eta)$ to $k$ lies in $[\nu^{-1}]$.
  
  Let $\tilde{x}\in\scrS(W(k))$ be a lift of $x$, and identify $\bbD_x\cong\calV_{\tilde{x}}^{\circ}$. Let 
  \[
    \Lambda^*_{W(k)}\supset\beta^{-1}(\Fil^1\calV_{\tilde{x}}^{\circ})
  \]
  be the pullback of the Hodge filtration on $\calV_{\tilde{x}}^{\circ}$. By Prop. \ref{dRExtProp}(iii) there is a cocharacter $\lambda$ of $\calG_{W(k)}$ which induces this filtration, and by Lemma \ref{CocharLem1} we know that $\lambda\in[\nu^{-1}]$.\\
  Reducing the whole situation modulo $p$ we obtain the contravariant Dieudonne space $(\overline{\bbD_x}, \overline{F}, \overline{V})$ associated to the $p$-torsion $\calA_x[p]$ and the isomorphism
  \[
    \bar{\beta}\colon\Lambda^*_k\stackrel{\sim}{\longrightarrow}\overline{\bbD_x}\cong\overline{\calV_{\tilde{x}}^{\circ}}=\calV_x^{\circ}=H_{\mathrm{dR}}^1(\calA_x/k).
  \] 
  By a result of Oda (\cite{Od1}, 5.11.), we have the equality $\Fil^1\calV_x^{\circ}=\ker(\overline{F})$, which implies that 
  \[
    \overline{\beta^{-1}(\Fil^1\calV_{\tilde{x}}^{\circ})}=\bar{\beta}^{-1}(\ker(\overline{F}))=\ker(\bar{\beta}^{-1}\circ\overline{F}\circ\bar{\beta})=\ker(\overline{F_{\beta}}),
  \]
  and this filtration is induced via $(\cdot)^{\vee}$ by the reduction $\bar{\lambda}$ of $\lambda$.\\
  On the other hand, we may write $g_{\beta}=g_0\eta(p)$ for some $g_0\in\calG(W(k))$, therefore
  \[
    F_{\beta}=g_0^{\vee}\circ\eta(p)^{\vee}\circ(1\otimes\sigma)=(1\otimes\sigma)\circ\sigma^{-1}(g_0)^{\vee}\circ\sigma^{-1}(\eta)(p)^{\vee}.
  \]
  Let $\Lambda^*_{W(k)}=\bigoplus_{m\in\bbZ}\Lambda_m^*$ be the grading which is induced by the cocharacter $(\cdot)^{\vee}\circ\sigma^{-1}(\eta)$ on $\Lambda^*_{W(k)}$. The inclusions $p\cdot\Lambda^*_{W(k)}\subseteq\im(F_{\beta})\subseteq\Lambda^*_{W(k)}$ show that we must have $\Lambda^*_m=(0)$ for $m\neq 0,1$, and thus reducing modulo $p$ we find that 
  \[
    \ker(\overline{F_{\beta}})=\ker(\overline{\sigma^{-1}(\eta)(p)^{\vee}})=\overline{\Lambda^*_1}.
  \]
  This implies that the two cocharacters $(\cdot)^{\vee}\circ\overline{\sigma^{-1}(\eta)}$ and $(\cdot)^{\vee}\circ\bar{\lambda}$ induce the same filtration on $\Lambda^*_k$. Now it follows by the same argument as in the proof of Lemma \ref{CocharLem1} that $\overline{\sigma^{-1}(\eta)}$ and $\bar{\lambda}$ are $\calG(k)$-conjugate, which concludes the proof.
\end{proof}

\begin{corollary}
  \label{LinearizationCor}
  Let $k$ be as in Constr. \ref{LinConstr}, let $x,x'\in\scrS(k)$. Let $g_{\beta}, g_{\beta'}\in\calG(W(k))$ be associated to isomorphisms $\beta$ and $\beta'$ between $(\Lambda^*_{W(k)},s_{W(k)})$ and $(\bbD_x,s_{\mathrm{cris},x})$ resp. $(\bbD_{x'},s_{\mathrm{cris},x'})$. Then there is an isomorphism of Dieudonn{\'e} modules $\bbD_x\simeq\bbD_{x'}$ which identifies $s_{\mathrm{cris},x}$ with $s_{\mathrm{cris},x'}$ if and only if $g_{\beta'}=hg_{\beta}\sigma(h)^{-1}$ for some $h\in\calG(W(k))$.
\end{corollary}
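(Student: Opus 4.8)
The plan is to transport everything to the fixed lattice $\Lambda^*_{W(k)}$ via $\beta$ and $\beta'$, where by Construction \ref{LinConstr} the Frobenius is encoded by $g_\beta$, resp. $g_{\beta'}$, and then simply unwind the definitions; the statement should then drop out formally from Lemma \ref{LinearizationLem}(i) and the two commuting squares of Construction \ref{LinConstr}.

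For the ``only if'' direction, suppose $\varphi\colon\bbD_x\to\bbD_{x'}$ is an isomorphism of Dieudonné modules with $\varphi^{\otimes}(s_{\mathrm{cris},x})=s_{\mathrm{cris},x'}$. Then $\beta'^{-1}\circ\varphi\circ\beta$ is a $W(k)$-linear automorphism of $\Lambda^*_{W(k)}$ fixing the tensors $s_{W(k)}$, and since these tensors define the subgroup $\{g^{\vee}\mid g\in\calG(W(k))\}\subseteq\GL(\Lambda^*_{W(k)})$ (Section \ref{TensorPrep}), I can write $\varphi=\beta'\circ h^{\vee}\circ\beta^{-1}$ for a unique $h\in\calG(W(k))$. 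I would then substitute this into the linearized compatibility relation $\varphi\circ F^{\mathrm{lin}}=F'^{\mathrm{lin}}\circ\varphi^{(\sigma)}$, using the squares $F^{\mathrm{lin}}\circ\beta^{(\sigma)}=\beta\circ g_\beta^{\vee}$ and $F'^{\mathrm{lin}}\circ\beta'^{(\sigma)}=\beta'\circ g_{\beta'}^{\vee}$ from Construction \ref{LinConstr}, together with the identity $(h^{\vee})^{(\sigma)}=\sigma(h)^{\vee}$ (valid under the identification $(\Lambda^*_{W(k)})^{(\sigma)}\cong\Lambda^*_{W(k)}$ because $(\cdot)^{\vee}$ is defined over $\Zp$, hence $\sigma$-equivariant). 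After cancelling the isomorphisms $\beta$, $\beta'$ and $\beta^{(\sigma)}$ and using that $(\cdot)^{\vee}$ is an injective group homomorphism, the relation collapses to $hg_\beta=g_{\beta'}\sigma(h)$, i.e. $g_{\beta'}=hg_\beta\sigma(h)^{-1}$.

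Conversely, given $h\in\calG(W(k))$ with $g_{\beta'}=hg_\beta\sigma(h)^{-1}$, I would set $\varphi:=\beta'\circ h^{\vee}\circ\beta^{-1}$. It preserves the crystalline tensors, since $\beta$, $h^{\vee}$ and $\beta'^{-1}$ all do, and running the previous computation backwards shows $\varphi\circ F^{\mathrm{lin}}=F'^{\mathrm{lin}}\circ\varphi^{(\sigma)}$, i.e. $\varphi\circ F=F'\circ\varphi$. It then remains to observe that a $W(k)$-linear isomorphism commuting with $F$ automatically commutes with $V$: as $W(k)$ is a domain and $F$ is injective with $FV=VF=p$, one has $V=pF^{-1}$ and $V'=p(F')^{-1}$ after inverting $p$, so $\varphi\circ F=F'\circ\varphi$ forces $\varphi\circ V=V'\circ\varphi$. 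Hence $\varphi$ is the required isomorphism of Dieudonné modules.

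I do not expect a genuine obstacle here, as the argument is essentially bookkeeping once Construction \ref{LinConstr} and Lemma \ref{LinearizationLem}(i) are available; the only points that need care are the Frobenius/duality bookkeeping for the linearizations — in particular that $(\cdot)^{\vee}$ is a group homomorphism rather than an anti-homomorphism, and the identity $(h^{\vee})^{(\sigma)}=\sigma(h)^{\vee}$ — together with the small remark that compatibility with $V$ is forced by compatibility with $F$.
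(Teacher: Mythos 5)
Your proof is correct and follows essentially the same route as the paper: transport the putative isomorphism $\varphi$ to an automorphism $\delta=\beta'^{-1}\circ\varphi\circ\beta$ of $(\Lambda^*_{W(k)},s_{W(k)})$, write $\delta=h^{\vee}$ for a unique $h\in\calG(W(k))$, and unwind the Frobenius-compatibility relation $F_{\beta'}\circ\delta=\delta\circ F_\beta$ into $g_{\beta'}=hg_\beta\sigma(h)^{-1}$. Your extra remark that compatibility with $V$ is automatic once one has compatibility with $F$ is a small but welcome detail that the paper leaves implicit.
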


\begin{proof}
  By Constr. \ref{LinConstr} the existence of an isomorphism $\bbD_x\simeq\bbD_{x'}$ which respects the tensors on both sides is equivalent to the existence of an automorphism $\delta$ of $(\Lambda^*_{W(k)},s_{W(k)})$ such that 
  \[
    g_{\beta'}^{\vee}\circ(1\otimes\sigma)\circ\delta=\delta\circ g_{\beta}^{\vee}\circ(1\otimes\sigma)\tag{*}
  \]
  Every such automorphism must be of the form $\delta=h^{\vee}$ for a unique $h\in\calG(W(k))$, and an easy calculation shows that the property (*) is equivalent to $g_{\beta'}=hg_{\beta}\sigma(h)^{-1}$. 
\end{proof}

\subsection{The Newton stratification}
\label{NewtStratSec}

  In order to define the Newton stratification on $\scrS\otimes\kappa(v)$ we recall some facts on $\sigma$-conjugacy classes: 
  
  Let $k$ be algebraically closed of characteristic $p$. We denote by $[g]$ the $\sigma$-conjugacy class of an element $g\in G(L(k))$, and by $B(G)$ the set of all $\sigma$-conjugacy classes in $G(L(k))$. This definition is in fact independent of $k$ in the following sense: If $k'$ is any algebraically closed field of characteristic $p$, then every inclusion $k\subseteq k'$ induces a bijection between the $\sigma$-conjugacy classes of $G(L(k))$ and those of $G(L(k'))$ (see \cite{RR}, 1.3.).
  
  To describe the set $B(G)$ one uses the two maps
  \[
    \nu_G\colon B(G)\longrightarrow \left(W\setminus X_*(\calT)_{\bbQ}\right)^{\langle\sigma\rangle},\qquad \kappa_G\colon B(G)\longrightarrow\pi_1(G)_{\langle\sigma\rangle},
  \]
  usually called the Newton map and the Kottwitz map of $G$ (see \cite{Ko1}, \cite{RR}). As explained in (\cite{RR}, \S2), the set $(W\setminus X_*(\calT)_{\bbQ})^{\langle\sigma\rangle}$ is endowed with a partial order $\preceq$ which generalizes the "lying above" order for Newton polygons. 
  
  We define 
  \[
    \bar{\mu}:=\frac{1}{r}\sum_{i=0}^{r-1}\sigma^i(\mu)\in (X_*(\calT)_{\bbQ})^{\langle\sigma\rangle},
  \]
  where $r$ is some integer such that $\sigma^r(\mu)=\mu$ (obviously this does not depend on the choice of $r$), and also identify $\bar{\mu}$ with its image in $(W\setminus X_*(\calT)_{\bbQ})^{\langle\sigma\rangle}$. Let $\mu^{\natural}\in\pi_1(G)_{\langle\sigma\rangle}$ be the image of $\mu$ under the natural projection  
  \[
    X_*(\calT)\to\pi_1(G)_{\langle\sigma\rangle}=(X_*(\calT)/\langle \alpha^{\vee}\mid\alpha^{\vee}\in\Phi^{\vee}\rangle)_{\langle\sigma\rangle}.
  \]

  \begin{definition}
    Let $B(G,\mu):=\big\{b\in B(G)\mid \kappa_G(b)=\mu^{\natural},\ \nu_G(b)\preceq\bar{\mu}\big\}$. We endow $B(G,\mu)\subset B(G)$ with the induced partial order $\preceq$.
  \end{definition}
  
  By work of Kottwitz-Rapoport (\cite{KR}), Lucarelli (\cite{Lu}) and Gashi (\cite{Ga}) we know that $B(G,\mu)$ is exactly the image of the double coset $\calG(W(k))\mu(p)\calG(W(k))$ in $B(G)$ (for any algebraically closed field $k$ of char. $p$). We summarize some combinatorial properties of this set in the following remark:
    
\begin{remark}
  \label{NewtParRem}
  \begin{enumerate}[(1)]
    \item $B(G,\mu)$ is a finite set (see \cite{RR}, 2.4.).   
    \item The set $B(G,\mu)$ contains a unique maximal element $b_{\mathrm{max}}$ with respect to $\preceq$, namely the $\sigma$-conjugacy class $[\mu(p)]$: In fact, the characterization of the Newton map in (\cite{Ko1}, 4.3.) shows that $\bar{\mu}$ is nothing but the image of $[\mu(p)]$ under $\nu_G$. On the other hand, it follows directly from the definition of $\kappa_G$ that $\mu^{\natural}$ is the image of $[\mu(p)]$ under $\kappa_G$. Therefore we have $[\mu(p)]\in B(G,\mu)$, and clearly the inequality $b\preceq [\mu(p)]$ holds for all $b\in B(G,\mu)$.
    \item $B(G,\mu)$ contains a unique basic element $b_{\mathrm{bas}}$ (it corresponds to $\mu^{\natural}$ under the bijection between basic $\sigma$-conjugacy classes and $\pi_1(G)_{\langle\sigma\rangle}$, see \cite{RR}, 1.15.), which is also the unique minimal element with respect to $\preceq$ in $B(G,\mu)$.  
  \end{enumerate}
\end{remark}
  
  Let us now define the Newton stratification on $\scrS\otimes\kappa(v)$: Consider a point $x\in\scrS\otimes\kappa(v)$, let $k(x)$ be the residue class field of $\scrS$ in $x$. Let $k$ be some algebraic closure of $k(x)$, and let $\hat{x}$ be the associated geometric point. Constr. \ref{LinConstr} associates to each isomorphism $\beta\colon (\Lambda^*_{W(k)},s_{W(k)})\simeq(\bbD_{\hat{x}},s_{\mathrm{cris},\hat{x}})$ an element $g_{\beta}\in G(L(k))$, and Lemma \ref{LinearizationLem} shows that the $\sigma$-conjugacy class $[g_{\beta}]$ is independent of the choice of $\beta$ and lies in $B(G,\mu)$. Further, this element only depends on $x$ and not on the choice of the algebraic closure of $k(x)$ in the sense explained at the beginning of this subsection. Thus the assignment $x\mapsto[g_{\beta}]$ gives a well-defined map
\[
  \mathrm{Newt}\colon\scrS\otimes\kappa(v)\longrightarrow B(G,\mu).
\]

\begin{definition}
  \label{NewtonStratDef}
  \begin{enumerate}[(i)]
    \item For an element $b\in B(G,\mu)$ we set $\calN^b:=\mathrm{Newt}^{-1}(\{b\})\subseteq\scrS\otimes\kappa(v)$. We call $\calN^b$ the \emph{Newton stratum} of $b$.
    \item We call the stratum $\calN^{b_{\mathrm{max}}}$ the $\mu$-\emph{ordinary locus} in $\scrS\otimes\kappa(v)$. 
  \end{enumerate}
\end{definition}

A priori, the $\calN^b$ are just subsets of $\scrS_0$, but we will see below that they are in fact locally closed, which justifies the name "strata".

\begin{remark}
  \begin{enumerate}[(1)]
    \item In view of Cor. \ref{LinearizationCor} we see that two points $x_1,x_2\in\scrS\otimes\kappa(v)$ lie in the same Newton stratum if and only if the following holds: If $k$ is any algebraically closed field such that $k(x_1)$ and $k(x_2)$ both embed into $k$, with associated points $\hat{x}_1,\hat{x}_2\in\scrS(k)$, then there is an isomorphism of isocrystals $(\bbD_{\hat{x}_1})_{\bbQ}\simeq(\bbD_{\hat{x}_2})_{\bbQ}$ which identifies the tensors $s_{\mathrm{cris},\hat{x}_1}$ with $s_{\mathrm{cris},\hat{x}_2}$. 
    \item In the case of a PEL-type Shimura datum, at each geometric point $\hat{x}$ of $\scrS\otimes\kappa(v)$ the tensors $s_{\mathrm{cris},\hat{x}}$ describe the additonal structure on $\bbD_{\hat{x}}$ (cf. Rem. \ref{PELRem2}). Hence in this case the Newton strata from Def. \ref{NewtonStratDef} agree with those considered in \cite{RR}.
  \end{enumerate}
\end{remark}

It is natural to conjecture that the Grothendieck specialization theorem holds for the $\calN^b$. That is, if $x_1,x_2\in\scrS\otimes\kappa(v)$ such that $x_2$ is a specialization of $x_1$, then we expect that $\mathrm{Newt}(x_2)\preceq\mathrm{Newt}(x_1)$. To our knowledge, this has not yet been established for a general Shimura variety of Hodge type. In the PEL-case, it follows from the fact that the isocrystal over $\scrS\otimes\kappa(v)$ associated to the $p$-divisible group $\calA\otimes\kappa(v)[p^{\infty}]$, with induced additional structure, can be understood as an isocrystal with $G$-strucure in the sense of (\cite{RR}, \S3). 

There is, however, the following result of Vasiu. Since $\calA\otimes\kappa(v)$ is a polarized abelian scheme over $\scrS\otimes\kappa(v)$, we have the classical stratification of $\scrS\otimes\kappa(v)$ by Newton polygons, as defined by Oort. For a symmetric Newton polygon $\Delta\in B(\GSp(V))$, denote the corresponding stratum by $\calN_{\mathrm{NP}}^{\Delta}$. Then every $\calN^b$ lies in a unique stratum $\calN_{\mathrm{NP}}^{\Delta(b)}$, this defines a map $B(G,\mu)\to B(\GSp(V)),\ b\mapsto\Delta(b)$, which should be thought of as "forgetting the tensor structure".

\begin{proposition}[\cite{Va2}, 5.3.1.(ii)]
  \label{NewtonStratProp}
  For every $b\in B(G,\mu)$ the stratum $\calN^b$ is an open and closed subset of  $\calN_{\mathrm{NP}}^{\Delta(b)}$. 
\end{proposition}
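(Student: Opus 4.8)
The plan is to reduce the statement to the assertion that the Newton map $\mathrm{Newt}$ is locally constant on each plain Newton stratum $\calN_{\mathrm{NP}}^{\Delta}$, and then to prove this via the theory of $F$-isocrystals with $G$-structure. For the first reduction, observe that $\calN^{b}\subseteq\calN_{\mathrm{NP}}^{\Delta(b)}$ holds by the very definition of $\Delta(b)$, and that since $B(G,\mu)$ is finite we obtain a decomposition $\calN_{\mathrm{NP}}^{\Delta}=\bigsqcup_{b\,:\,\Delta(b)=\Delta}\calN^{b}$ into finitely many pairwise disjoint subsets. Hence it suffices to show that each $\calN^{b}$ is \emph{open} in $\calN_{\mathrm{NP}}^{\Delta}$ --- a decomposition of a topological space into finitely many open subsets has each member closed as well --- that is, that $\mathrm{Newt}$ is locally constant on $\calN_{\mathrm{NP}}^{\Delta}$. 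I also record that $\kappa_{G}$ is constant on $B(G,\mu)$ with value $\mu^{\natural}$, so an element of $B(G,\mu)$ is pinned down by its image under $\nu_{G}$.

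The second step is to bring in the crystalline realisation. By Prop.~\ref{dRExtProp} and Cor.~\ref{CrysTensors}, the $F$-isocrystal $\calV:=H^{1}_{\mathrm{cris}}(\calA/(\scrS\otimes\kappa(v)))[1/p]$ carries the tensors $s_{\mathrm{cris}}$, which are horizontal for the Gau{\ss}--Manin connection, fibrewise $F$-invariant, and cut out the group $\calG$ (this last being part of Kisin's construction \cite{Ki1}, as recalled before Cor.~\ref{CrysTensors}). Following Vasiu, I would argue that these data constitute an $F$-isocrystal with $\calG$-structure over $\scrS\otimes\kappa(v)$ in the sense of \cite{RR}, \S3, whose Newton point at a geometric point $\hat{x}$ is precisely $\mathrm{Newt}(x)\in B(G,\mu)$ (via the description of $\mathrm{Newt}$ through the element $g_{\beta}$ of Constr.~\ref{LinConstr}). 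The Rapoport--Richartz formalism then already yields that the $G$-Newton strata are locally closed and satisfy Grothendieck's specialisation property.

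The heart of the argument is to show that $\mathrm{Newt}$ is constant on the connected components of $S:=\calN_{\mathrm{NP}}^{\Delta}$, on which $\calA[p^{\infty}]$ has constant Newton polygon $\Delta$. Here I would, after passing to a suitable {\'e}tale covering of $S$, invoke the theory of $p$-divisible groups with constant Newton polygon (slope filtrations; cf.~\cite{dJO} and Oort--Zink) to exhibit $\calA[p^{\infty}]|_{S}$ as isogenous to a completely slope-divisible $p$-divisible group carrying its slope filtration $0=\calH_{0}\subset\dots\subset\calH_{m}=\calH$, with $\calH_{i}/\calH_{i-1}$ isoclinic of a slope $\lambda_{i}$ determined by $\Delta$. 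On the $F$-isocrystal $\calV$ this slope filtration is the canonical Harder--Narasimhan-type filtration; being canonically defined, it ought to be a filtration by $\calG$-subobjects, so the $\calG$-structure reduces to a parabolic subgroup scheme $\calP\subseteq\calG_{S}$. Then at every geometric point of $S$ the Newton point is the dominant cocharacter $\nu$ with $P_{\nu}$ conjugate to $\calP$ and with slope multiset on $V$ prescribed by $\Delta$; since $X_{*}(G)_{\bbQ}\hookrightarrow X_{*}(\GL(V))_{\bbQ}$, such a $\nu$ is uniquely determined by the conjugacy type of $\calP$ together with $\Delta$. As the type of the parabolic subgroup scheme $\calP$ is constant on connected components of $S$, so is $\mathrm{Newt}$, and the proposition follows.

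The main obstacle is exactly the compatibility of the horizontal crystalline tensors with the slope filtration over the base $S$, i.e. that the $\calG$-structure genuinely reduces to $\calP$: over a possibly non-reduced and non-normal base the slope filtration, and its interaction with the crystal and with the tensors, is considerably more delicate than over a single geometric point. This is precisely the input provided by Vasiu's crystalline boundedness principle, the engine behind \cite{Va2}, 5.3.1, which says that the $G$-isogeny type cannot vary over a base on which the plain Newton polygon is constant. A secondary, more routine point is to check that the Hodge-type (non-PEL) construction of Prop.~\ref{dRExtProp} really produces an $F$-isocrystal with $G$-structure in the technical sense demanded by the Rapoport--Richartz machinery, and not merely the pointwise statement of Cor.~\ref{CrysTensors}; this is also addressed in \cite{Va1}.
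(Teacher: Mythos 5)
The paper itself offers no proof of this proposition; it is cited verbatim from Vasiu [Va2, 5.3.1.(ii)]. So what is at stake is whether your sketch is a viable outline of an actual argument.

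Your reduction to showing that $\mathrm{Newt}$ is locally constant on each plain Newton stratum $\calN_{\mathrm{NP}}^{\Delta}$ is correct and economical, and you are right to flag the compatibility of the horizontal tensors with the slope filtration over a possibly non-reduced, non-normal base as the serious technical difficulty, and to defer it to Vasiu's crystalline boundedness principle. The issue is with the step you present as routine. You assert that the Newton cocharacter $\nu$ at a geometric point is ``uniquely determined by the conjugacy type of $\calP$ together with $\Delta$'', invoking the injection $X_*(G)_{\bbQ}\hookrightarrow X_*(\GL(V))_{\bbQ}$. This does not follow: injectivity of cocharacter spaces does not give injectivity of the induced map
\[
  W\backslash X_*(\calT)_{\bbQ}\ \longrightarrow\ W_{\GL(V)}\backslash X_*(T_{\GL(V)})_{\bbQ}
\]
on Weyl-orbit quotients, even after fixing the type of the parabolic. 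A transparent illustration: take $G=\SL_2\times\SL_2\hookrightarrow\GL_4$ via the sum of the standard representations, and consider the dominant cocharacters $\nu=(a,-a\,;\,b,-b)$ and $\nu'=(b,-b\,;\,a,-a)$ with $a>b>0$. Both have parabolic $B_1\times B_2\subset G$ of the same (Borel) type, and both induce the same multiset of slopes $\{a,-a,b,-b\}$ on $V$ --- the same $\Delta$ --- yet they lie in distinct $W$-orbits since $W=(\bbZ/2)\times(\bbZ/2)$ only flips signs within each factor. So two points of $\calN_{\mathrm{NP}}^{\Delta}$ could, a priori, have distinct $\mathrm{Newt}$-values without any change detectable by your two invariants. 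The substance of [Va2, 5.3.1] is precisely that this jump does \emph{not} happen in a connected family, and it requires the full crystalline boundedness machinery rather than the linear-algebraic uniqueness you invoke. As the paper likewise cites Vasiu for this result, the honest formulation of your last paragraph would be to replace the claimed uniqueness of $\nu$ by a direct appeal to [Va2], acknowledging that the local constancy is itself the theorem being used, not a consequence of fixing $(\text{type of }\calP,\Delta)$.
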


As a consequence, since the strata $\calN_{\mathrm{NP}}^{\Delta}$ are locally closed subsets of $\scrS\otimes\kappa(v)$, the same holds true for the strata $\calN^b$. We endow them with the structure of a reduced subscheme of $\scrS\otimes\kappa(v)$.

\subsection{The Ekedahl-Oort stratification}
\label{EOStratSec}

Recall that $\Fbar$ denotes a fixed algebraic closure of $\Fp$. We now describe the Ekedahl-Oort stratification on $\scrS\otimes\Fbar$ which has been constructed and studied by Zhang in \cite{Zh1}. However, we give a slightly different definition, as we feel that one should work with $\Lambda^*$ rather than with $\Lambda$, further we make the definition independent of the choice of a cocharacter. The main results of \cite{Zh1} remain true with the obvious changes.\\ 

The definition of the Ekedahl-Oort stratification is based on the theory of $\calG_{\Fp}$-Zips which has been developed in \cite{PWZ1}, \cite{PWZ2}, and which we will apply to a cocharacter in the conjugacy class $[\nu^{-1}]$. The stratification is parametrized by the subset $^JW$ of the Weyl group $(W,S)$, where $J\subseteq S$ is the type of the conjugacy class $[\nu^{-1}]$. Let us recall how this data is defined:  We view $[\nu^{-1}]$ as an element of $W\setminus X_*(\calT)$, as explained at the beginning of this section. Let $\chi_{\mathrm{dom}}\in X_*(\calT)$ be the unique dominant element lying in $[\nu^{-1}]$, then $J:=\{s\in S\mid s(\chi_{\mathrm{dom}})=\chi_{\mathrm{dom}}\}$. The subset $J$ generates a subgroup $W_J$ of $W$, and $(W_J,J)$ is again a coxeter group. Every left coset of $W_J$ in $W$ contains a unique minimal element with respect to the length function on $(W,S)$, and the set $^JW$ set of these elements forms a full set of representatives for the left cosets of $W_J$ in $W$ (see for example \cite{BB}, \S 2.4.).  
  
  Let $w_0$ and $w_{0,J}$ be the longest elements of $W$ and $W_J$ respectively and let $x_J:=w_0w_{0,J}$. We have a partial order $\preceq$ on ${^JW}$ (\cite{PWZ1}, 6.3.) given as
  \[
    w'\preceq w\ :\Longleftrightarrow\ yw'\sigma(x_Jyx_J^{-1})\leq w\ \text{ for some }y\in W_J.
  \]
  This partial order induces a topology on $^JW$ such that a subset $U\subset{^JW}$ is open if and only if for any $w'\in U$ and any $w\in{^JW}$ with $w'\preceq w$ one also has $w\in U$.\\

  If $X$ is a scheme or a sheaf over an $\Fp$-scheme $S$, denote by $X^{(\sigma)}$ its pullback by the absolute Frobenius $x\mapsto x^p$ on $S$, and likewise for morphisms of objects over $S$. Let $\kappa$ be an algebraic extension of $\Fp$. Since $\calG_{\kappa}=\calG\otimes\kappa$ is defined over $\Fp$, we have $\calG_{\kappa}^{(\sigma)}\cong\calG_{\kappa}$ canonically (compare Section \ref{sigmaPrep}). In particular, for every subgroup $H\subseteq \calG_{\kappa}$ the pullback $H^{(\sigma)}$ is again a subgroup of $\calG_{\kappa}$. Further, the composition $\calG_{\kappa}\stackrel{\mathrm{Frob}_p}{\longrightarrow}\calG_{\kappa}^{(\sigma)}\cong\calG_{\kappa}$ of the relative Frobenius morphism of $\calG_{\kappa}$ with this canonical isomorphism is an isogeny of the algebraic group $\calG_{\kappa}$. By abuse of notation we denote this isogeny again by $\sigma$. 
  
   Let $\chi$ be a cocharacter of $\calG_{\kappa}$ such that $\chi_{\Fbar}\in[\nu^{-1}]$. Using the identification $\calG_{\kappa}^{(\sigma)}\cong\calG_{\kappa}$ we can view $\chi^{(\sigma)}$ as a cocharacter of $\calG_{\kappa}$ as well. Let $P_+, P_-\subseteq\calG_{\kappa}$ be the parabolic subgroups which are characerized by the property that $\Lie(P_{\pm})$ is the sum of the non-negative (non-positive) weight spaces with respect to the adjoint operation of $\chi$ on $\Lie(\calG_{\kappa})$. Denote by $U_+$ and $U_-$ the corresponding unipotent radicals and by $M$ the common Levi subgroup of $P_+$ and $P_-$.\\

\begin{definition}[\cite{PWZ2}, 3.1.]
  \label{GZipDef}
  Let $S$ be a scheme over $\kappa$. A $\calG_{\Fp}$\emph{-zip} of type $\chi$ over $S$ is a quadruple $\underline{I}=(I,I_+,I_-,\iota)$, where
  \begin{enumerate}[(1)]
    \item $I$ is a right $\calG_{\kappa}$-torsor for the fpqc-topology on $S$,
    \item $I_+\subseteq I$ and $I_-\subseteq I$ are subsheaves such that $I_+$ is a $P_+$-torsor and $I_-$ is a $P_-^{(\sigma)}$-torsor,
    \item $\iota\colon I_+^{(\sigma)}/U_+^{(\sigma)}\stackrel{\sim}{\longrightarrow}I_-/U_-^{(\sigma)}$ is an isomorphism of $M^{(\sigma)}$-torsors.
  \end{enumerate}
  A morphism $\underline{I}\longrightarrow\underline{I}'$ of $\calG_{\Fp}$-zips over $S$ is a $\calG_{\kappa}$-equivariant map $I\to I'$ which maps $I_+$ to $I_+'$ and $I_-$ to $I_-'$ and is compatible with $\iota$ and $\iota'$.
\end{definition}

With the natural notion of pullback the $\calG_{\Fp}$-zips of type $\chi$ form a stack $\calG_{\Fp}\mathtt{-Zip}_{\kappa}^{\chi}$ over $(\mathbf{Sch}/\kappa)$ (\cite{PWZ2}, 3.2.).

\begin{proposition}[\cite{PWZ2}, 3.12., 3.20., 3.21.]
  \label{StackTopSpace}
  $\calG_{\Fp}\mathtt{-Zip}_{\kappa}^{\chi}$ is a smooth algebraic stack of dimension $0$ over $\kappa$, and there is a homeomorphism of topological spaces
  \[
    \calG_{\Fp}\mathtt{-Zip}_{\kappa}^{\chi}(\Fbar)\simeq{^J}W
  \]
  where $^JW$ is endowed with the topology given by $\preceq$. 
\end{proposition}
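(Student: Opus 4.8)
The plan is to present $\calG_{\Fp}\mathtt{-Zip}_{\kappa}^{\chi}$ as a quotient stack and then translate every assertion into a statement about a single algebraic group acting on $\calG_{\kappa}$. The first step is to introduce the \emph{zip group} attached to $\chi$,
\[
  \bfE_{\chi}:=\{(a,b)\in P_+\times P_-^{(\sigma)}\mid \sigma(\bar a)=\bar b\},
\]
where $\bar a\in M$ and $\bar b\in M^{(\sigma)}$ denote the images of $a,b$ under the projections onto the common Levi quotients and $\sigma\colon M\to M^{(\sigma)}$ is the Frobenius isogeny; it acts on $\calG_{\kappa}$ through $(a,b)\cdot g:=agb^{-1}$. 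Following \cite{PWZ2}, \S3, I would show that there is an isomorphism of stacks $\calG_{\Fp}\mathtt{-Zip}_{\kappa}^{\chi}\cong[\bfE_{\chi}\backslash\calG_{\kappa}]$: starting from a $\calG_{\kappa}$-zip $\underline I=(I,I_+,I_-,\iota)$ one trivializes $I$ fpqc-locally, records $I_+$ and $I_-$ by sections of $P_+$ and $P_-^{(\sigma)}$ and $\iota$ by a section of $\calG_{\kappa}$, and checks that the only ambiguity in these choices is an $\bfE_{\chi}$-translation, so that the glueing data is precisely a descent datum for the $\bfE_{\chi}$-action.

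Granting this presentation, smoothness and the dimension count follow formally. The projection $(a,b)\mapsto\bar b$ realizes $\bfE_{\chi}$ as an extension of the connected reductive group $M^{(\sigma)}$ by the connected unipotent group $U_+\times U_-^{(\sigma)}$, so $\bfE_{\chi}$ is smooth and connected; and since the big cell identifies $U_-\times P_+$ with a dense open subscheme of $\calG_{\kappa}$ while the weights of $\chi$ on $\Lie(\calG_{\kappa})$ occur in opposite pairs, one obtains $\dim\bfE_{\chi}=\dim P_++\dim U_-^{(\sigma)}=\dim P_++\dim U_-=\dim\calG$. As $\calG_{\kappa}$ is smooth, $[\bfE_{\chi}\backslash\calG_{\kappa}]$ is a smooth algebraic stack over $\kappa$ of dimension $\dim\calG-\dim\bfE_{\chi}=0$.

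For the topological statement I would first note that, since $\bfE_{\chi}$ is smooth and connected, every $\bfE_{\chi}$-torsor over $\Fbar$ is trivial, so the isomorphism classes in $\calG_{\Fp}\mathtt{-Zip}_{\kappa}^{\chi}(\Fbar)$ are the orbits in $\bfE_{\chi}(\Fbar)\backslash\calG(\Fbar)$; more precisely, open (resp.\ closed) substacks of $[\bfE_{\chi}\backslash\calG_{\kappa}]$ correspond to $\bfE_{\chi}$-stable open (resp.\ closed) subschemes of $\calG_{\kappa}$, so $|\calG_{\Fp}\mathtt{-Zip}_{\kappa}^{\chi}|$ is the orbit space $|\calG_{\kappa}|/\bfE_{\chi}$ with the quotient topology. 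Everything then reduces to two facts about this action: (a) the $\bfE_{\chi}$-orbits on $\calG_{\kappa}$ are in canonical bijection with ${}^JW$; and (b) for $w\in{}^JW$ the closure of the corresponding orbit $O_w$ equals $\bigsqcup_{w'\preceq w}O_{w'}$. Granting (a) and (b), a short check shows that in the order topology on ${}^JW$ described above one has $\overline{\{w\}}=\{w':w'\preceq w\}$, so a subset of ${}^JW$ is open exactly when the union of the corresponding orbits is, which is the asserted homeomorphism. For (a) I would fix a pinning of $(\calG,\calB,\calT)$, choose for each $w\in{}^JW$ a representative $\dot w\in N_{\calG}(\calT)(\kappa)$, and show that every orbit meets the finite set $\{\,\dot w g_0\ :\ w\in{}^JW\,\}$ in exactly one point, where $g_0$ is a fixed element relating $P_+$ and $P_-^{(\sigma)}$; this is a Bruhat-decomposition computation, the type $J$ entering because $M$ is generated by $\calT$ together with the simple reflections in $J$.

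The main obstacle is part (b): the orbit-closure relations are genuinely subtle and generalize the results of Lusztig and He on $G$-stable pieces in flag varieties. Here I would follow the strategy of \cite{PWZ1}, \S\S5--6, and \cite{PWZ2}, \S3, establishing the two inclusions separately: proving that $O_w$ has the expected dimension $\dim P_++\ell(w)$ and degenerating explicitly inside Bruhat cells to obtain $O_{w'}\subseteq\overline{O_w}$ for $w'\preceq w$, while a dimension argument combined with an induction on $\dim\calG$ (passing to the Levi $M$ and to maximal parabolics) rules out any $O_{w'}$ with $w'\not\preceq w$ from lying in $\overline{O_w}$. Combining (a) and (b) with the quotient-stack presentation yields all three assertions of the proposition.
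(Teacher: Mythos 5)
Your proposal is correct and is essentially the argument that the cited reference [PWZ2] (with its companion [PWZ1]) carries out; the paper itself does not reprove the proposition but later recaps the key ingredients --- the zip group $E_{\chi}$, the presentation $\calG_{\Fp}\mathtt{-Zip}_{\kappa}^{\chi}\simeq[E_{\chi}\backslash\calG_{\Fbar}]$, the standard zips $\underline{I}_a$, and the parametrization of $E_{\chi}$-orbits by ${}^JW$ together with the closure order --- in the passage labeled \ref{ZipClass}, all of which match your outline (including the dimension count $\dim E_{\chi}=\dim\calG$). The only cosmetic divergence is your choice of orbit representatives $\dot w g_0$ versus the paper's $\bar{\dot w}_{0,J}\bar{\dot w}\bar{\dot w}_0$, which differ only by a harmless normalization of the same Bruhat-type parametrization.
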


So in particular, there is a bijection between the set of isomorphism classes of $\calG_{\Fp}$-zips of type $\chi$ over $\Fbar$ and the set $^JW$. We will give a precise description of this bijection in the following section.

\begin{construction}
  \label{EOConstr}
  Let $\overline{\calV^{\circ}}:=H_{\mathrm{dR}}^1(\calA\otimes\kappa(v)/\scrS\otimes\kappa(v))$, which is the reduction mod $p$ of $\calV^{\circ}$. Let $\calC:=\overline{\Fil^1\calV^{\circ}}=\Fil^1\overline{\calV^{\circ}}$ be the Hodge filtration, this is a locally direct summand of $\overline{\calV^{\circ}}$. As explained in (\cite{MW}, \S7), the conjugate Hodge spectral sequence also gives rise to a locally direct summand $\calD:=R^1\pi_*(\scrH^0(\Omega^{\bullet}_{\calA\otimes\kappa(v)/\scrS\otimes\kappa(v)}))$ of $\overline{\calV^{\circ}}$, and the (inverse) Cartier homomorphism provides isomorphisms
  \[
    \phi_0\colon (\overline{\calV^{\circ}}/\calC)^{(\sigma)}\stackrel{\sim}{\longrightarrow}\calD,\qquad \phi_1\colon \calC^{(\sigma)}\stackrel{\sim}{\longrightarrow}\overline{\calV^{\circ}}/\calD.
  \]
  
  We now fix a cocharacter $\chi$ and a \emph{finite} extension $\kappa$ of $\kappa(v)$ such that $\chi$ is defined over $\kappa$ and such that $\chi_{\Fbar}\in[\nu^{-1}]$. Recall that $\calG_{\kappa}$ and thus $\chi$ and $\chi^{(\sigma)}$ act on $\Lambda^*_{\kappa}$ via the contragredient representation $(\cdot)^{\vee}$. Let 
  \[
    \Lambda^*_{\kappa}=\Fil^0_{\chi}\supset \Fil^1_{\chi}\supset(0),\qquad (0)\subset\Fil_0^{\chi^{(\sigma)}}\subset\Fil_1^{\chi^{(\sigma)}}=\Lambda^*_{\kappa}
  \]
  be the descending resp. ascending filtration given in this way by $\chi$ and by $\chi^{(\sigma)}$. Then $P_+$ is nothing but the stabilizer of $\Fil^{\bullet}_{\chi}$ in $\calG_{\kappa}$, that is,
  \[
    P_+=\{g\in\calG_{\kappa}\mid g^{\vee}(\Fil^1_{\chi})=\Fil^1_{\chi}\},
  \]
  and in the same fashion $P_-^{(\sigma)}$ is the stabilizer of $\Fil_{\bullet}^{\chi^{(\sigma)}}$.

  We denote by $\bar{s}_{\mathrm{dR}}$ the reduction of the tensors $s_{\mathrm{dR}}^{\circ}$ to $\overline{\calV^{\circ}}$, and by $\bar{s}$ the base change of $s\subset(\Lambda^*_{\Zp})^{\otimes}$ to $\Lambda^*_{\kappa}$. Define
\begin{align*}
  I:= & \mathbf{Isom}_{\scrS\otimes\kappa}\big((\Lambda^*_{\kappa},\bar{s})\otimes\calO_{\scrS\otimes\kappa},\, (\overline{\calV^{\circ}},\bar{s}_{\mathrm{dR}})\otimes\calO_{\scrS\otimes\kappa}\big),\\
  I_+:= & \mathbf{Isom}_{\scrS\otimes\kappa}\big((\Lambda^*_{\kappa},\bar{s},\Fil^{\bullet}_{\chi})\otimes\calO_{\scrS\otimes\kappa},\, (\overline{\calV^{\circ}},\bar{s}_{\mathrm{dR}},\overline{\calV^{\circ}}\supset\calC)\otimes\calO_{\scrS\otimes\kappa}\big),\\
  I_-:= & \mathbf{Isom}_{\scrS\otimes\kappa}\big((\Lambda^*_{\kappa},\bar{s},\Fil_{\bullet}^{\chi^{(\sigma)}})\otimes\calO_{\scrS\otimes\kappa},\, (\overline{\calV^{\circ}},\bar{s}_{\mathrm{dR}},\calD\subset\overline{\calV^{\circ}})\otimes\calO_{\scrS\otimes\kappa}\big).
\end{align*}
  We have a natural right action of $\calG_{\kappa}$ on $I$ given by $\beta\cdot g:=\beta\circ g^{\vee}$, and $I_+$ and $I_-$ inherit actions of $P_+$ and $P_-^{(\sigma)}$. 
\end{construction}

\begin{proposition}[\cite{Zh1}, 2.4.1.]
  \label{GzipProp}
  The Cartier isomorphisms induce an isomorphism $\iota\colon I_+^{(\sigma)}/U_+^{(\sigma)}\stackrel{\sim}{\longrightarrow}I_-/U_-^{(\sigma)}$ such that the tuple $\underline{I}=(I,I_+,I_-,\iota)$ is a $\calG_{\Fp}$-zip of type $\chi$ over $\scrS\otimes\kappa$.
\end{proposition}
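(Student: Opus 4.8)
The plan is to follow Zhang's construction in \cite{Zh1}, 2.4.1, with the purely formal modifications coming from our use of $\Lambda^*$ in place of $\Lambda$ and from not having fixed the cocharacter in advance. First I would verify that $I$ is a torsor under $\calG_{\kappa}$ for the fpqc (in fact étale) topology: it is representable, being the closed subscheme of the $\GL$-torsor of $\calO_{\scrS\otimes\kappa}$-linear isomorphisms $\Lambda^*_{\kappa}\otimes\calO_{\scrS\otimes\kappa}\xrightarrow{\sim}\overline{\calV^{\circ}}$ carved out by the condition of matching $\bar s$ with $\bar s_{\mathrm{dR}}$, and it acquires a section over a suitable cover because, by Proposition \ref{dRExtProp}(iii) reduced modulo $p$, such an isomorphism exists over every geometric point of $\scrS\otimes\kappa$; this spreads out to an étale neighbourhood by a standard limit argument using that $\calG_{\kappa}$ and $\scrS\otimes\kappa$ are smooth. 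By the defining property of $P_+$ and $P_-$ recalled in Construction \ref{EOConstr}, the stabilizer in $\calG_{\kappa}$ of $\Fil^{\bullet}_{\chi}$ is $P_+$ and that of $\Fil_{\bullet}^{\chi^{(\sigma)}}$ is $P_-^{(\sigma)}$, so $I_+$ and $I_-$ are pseudo-torsors under these groups; they are genuine torsors once one knows that a local trivialization of $I$ carries $\calC$ (resp.\ $\calD$) to a $\calG_{\kappa}$-conjugate of $\Fil^1_{\chi}$ (resp.\ of $\Fil_0^{\chi^{(\sigma)}}$). For $\calC$ this is once more Proposition \ref{dRExtProp}(iii) modulo $p$; for $\calD$ I would combine it with Oda's description of the conjugate filtration at a geometric point (\cite{Od1}) and with the cocharacter computation already carried out in the proof of Lemma \ref{LinearizationLem}, using that the type of the relevant parabolic is locally constant on the connected scheme $\scrS\otimes\kappa$ and hence everywhere equal to the type $J$ of $[\nu^{-1}]$.

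The heart of the matter is the construction of $\iota$. Passing to associated graded modules, an isomorphism $\beta\in I_+$ over an $\scrS\otimes\kappa$-scheme $T$ induces isomorphisms $\mathrm{gr}^0(\beta)\colon\Lambda^*_T/\Fil^1_{\chi}\xrightarrow{\sim}\overline{\calV^{\circ}}_T/\calC$ and $\mathrm{gr}^1(\beta)\colon\Fil^1_{\chi}\xrightarrow{\sim}\calC$ which depend only on the class of $\beta$ modulo $U_+$. Taking $\sigma$-pullbacks, using the canonical identifications of graded modules $(\Lambda^*_T/\Fil^1_{\chi})^{(\sigma)}\cong\Fil_0^{\chi^{(\sigma)}}$ and $(\Fil^1_{\chi})^{(\sigma)}\cong\Lambda^*_T/\Fil_0^{\chi^{(\sigma)}}$ (the $\sigma$-twist of the grading defined by $\chi$ is the grading defined by $\chi^{(\sigma)}$), and composing with the Cartier isomorphisms $\phi_0$ and $\phi_1$ of Construction \ref{EOConstr}, I obtain isomorphisms
\[
  \Fil_0^{\chi^{(\sigma)}}\xrightarrow{\ \phi_0\circ\mathrm{gr}^0(\beta)^{(\sigma)}\ }\calD,\qquad
  \Lambda^*_T/\Fil_0^{\chi^{(\sigma)}}\xrightarrow{\ \phi_1\circ\mathrm{gr}^1(\beta)^{(\sigma)}\ }\overline{\calV^{\circ}}_T/\calD.
\]
Granting that these respect $\bar s$ and $\bar s_{\mathrm{dR}}$, the pair is precisely an element of $(I_-/U_-^{(\sigma)})(T)$, and I would define $\iota$ on the class of $\beta^{(\sigma)}$ modulo $U_+^{(\sigma)}$ to be this element. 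That $\iota$ is then well defined, $M^{(\sigma)}$-equivariant, and an isomorphism of $M^{(\sigma)}$-torsors is formal, since $U_+$ acts trivially on associated gradeds and the Cartier isomorphisms are canonical; that $\underline I$ is then a $\calG_{\Fp}$-zip of type $\chi$ is immediate from Definition \ref{GZipDef}.

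The step I expect to be the main obstacle is checking that $\phi_0$ and $\phi_1$ are compatible with the tensors, i.e.\ that the graded isomorphism displayed above really does match $\bar s$ with $\bar s_{\mathrm{dR}}$, so that $\iota$ takes values in $I_-/U_-^{(\sigma)}$. Since this is a closed condition on $\scrS\otimes\kappa$ and the geometric points are dense, I would reduce to a geometric point $\hat x$; there, after identifying $\overline{\calV^{\circ}}_{\hat x}$ with $\bbD_{\hat x}\otimes\Fbar$, the Cartier isomorphisms are, on the relevant graded pieces, induced by the reductions of $F$ and $V$ on the Dieudonné module, so the compatibility follows from the $F$-invariance of the crystalline tensors $s_{\mathrm{cris},\hat x}$ (Corollary \ref{CrysTensors}) together with the fact that any trivialization $\beta$ carries $\bar s$ to the reduction of $s_{\mathrm{cris},\hat x}$. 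Equivalently --- and this is the route of \cite{Zh1} --- one packages $(\overline{\calV^{\circ}},\calC,\calD,\phi_0,\phi_1)$ as an $F$-zip and $\bar s_{\mathrm{dR}}$ as a $\calG$-structure on it in the sense of \cite{MW}, \cite{PWZ2}, and invokes the equivalence between such data and $\calG_{\Fp}$-zips; the remaining work is then just to transcribe that argument with $\Lambda^*$ and an unfixed cocharacter.
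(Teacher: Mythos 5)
Your proposal follows essentially the same route as the paper's proof: the paper establishes triviality of the fibers of $I$ and $I_+$ at closed points using Prop.~\ref{dRExtProp}(iii) and Lemma~\ref{CocharLem1}, then delegates the rest (the torsor structure on $I_-$, the construction of $\iota$ via the Cartier isomorphisms, and the tensor compatibility) to the arguments of \cite{Zh1}, Sections 2.2--2.4; your write-up simply unpacks those delegated steps explicitly, using the same inputs (\ref{dRExtProp}(iii), Oda's result, Cor.~\ref{CrysTensors}, and the cocharacter computation from Lemma~\ref{LinearizationLem}). This matches the intent and substance of the paper's proof.
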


\begin{proof}
  Let us show for our definitions that for every closed point $x$ of $\scrS\otimes\kappa$ the fibers $I_x$ and $(I_+)_x$ are trivial torsors for $\calG_{\kappa}$ resp. $P_+$: Let $k(x)$ be the residue class field of $\scrS$ at $x$, which is a finite extension of $\kappa(v)$, let $\tilde{x}\in\scrS(W(k(x)))$ be a lift of $x$. By Prop. \ref{dRExtProp}(iii) and Lemma \ref{CocharLem1} we find an isomorphism $\bar{\beta}\colon(\Lambda^*_{k(x)},s_{k(x)})\stackrel{\sim}{\longrightarrow}(\overline{\calV^{\circ}}_x, \bar{s}_{\mathrm{dR},x})$ and a cocharacter $\bar{\lambda}$ of $\calG_{k(x)}$ which induces the filtration $\Lambda^*_{k(x)}\supset\bar{\beta}^{-1}(\calC_x)$, and further we have $\bar{\lambda}_{\Fbar}\in[\nu^{-1}]$. Then $\bar{\beta}$ lies in $I_x(k(x))$, which shows that $I_x$ is trivial. Moreover, $\bar{\lambda}$ is conjugate to $\chi$ over some finite extension of $k(x)$. This implies that $(I_+)_x$ is a $P_+$-torsor, and as $P_+$ is connected and $k(x)$ is finite this torsor must be trivial.
  
  Now all the arguments in the sections 2.2. - 2.4. and in the proof of 2.4.1. of \cite{Zh1} carry over to our definition of $I$, $I_+$ and $I_-$ with the necessary adjustments. 
  
\end{proof}  
  
For every scheme $S$ over $\scrS\otimes\kappa$ one now obtains a $\calG_{\Fp}$-zip over $S$ by pulling back the $\calG_{\Fp}$-zip $\underline{I}$, in other words, $\underline{I}$ defines a morphism of algebraic stacks
\[
  \zeta\colon\scrS\otimes\kappa\to\calG_{\Fp}\mathtt{-Zip}_{\kappa}^{\chi}.
\]

\begin{theorem}[\cite{Zh1}, 3.1.2.]
  \label{ZhangThm}
  The morphism $\zeta$ is smooth. In particular it induces a continuous and open map of topological spaces 
  \[  
    \zeta(\Fbar)\colon\scrS(\Fbar)\longrightarrow \calG_{\Fp}\mathtt{-Zip}_{\kappa}^{\chi}(\Fbar)\simeq{^JW}.
  \]
\end{theorem}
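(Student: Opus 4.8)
The statement is Theorem~3.1.2 of \cite{Zh1}, reformulated for the conventions of this section, so the plan is to recall that argument and to check that the passage from $\Lambda$ to $\Lambda^*$ and from $[\nu]$ to $[\nu^{-1}]$ does no harm. The first step is to replace the zip stack by a quotient stack: by \cite{PWZ2} there is a canonical isomorphism $\calG_{\Fp}\mathtt{-Zip}_{\kappa}^{\chi}\cong[E_{\chi}\backslash\calG_{\kappa}]$, where $E_{\chi}\subseteq P_+\times P_-^{(\sigma)}$ is the zip group acting on $\calG_{\kappa}$ by $(p_+,p_-)\cdot g:=p_+\,g\,\sigma(p_-)^{-1}$ (using $\calG_{\kappa}^{(\sigma)}\cong\calG_{\kappa}$ as in Section~\ref{sigmaPrep}). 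Pulling back the $E_{\chi}$-torsor $\calG_{\kappa}\to[E_{\chi}\backslash\calG_{\kappa}]$ along $\zeta$ produces an $E_{\chi}$-torsor $\widetilde{\scrS}\to\scrS\otimes\kappa$ carrying an $E_{\chi}$-equivariant morphism $h\colon\widetilde{\scrS}\to\calG_{\kappa}$, and $\zeta$ is smooth if and only if $h$ is. As $\widetilde{\scrS}$ is locally of finite type over $\kappa$ and $\calG_{\kappa}$ is smooth, one reduces to checking that $h$ is formally smooth at every $\Fbar$-point, i.e.\ that for $\hat x\in\scrS(\Fbar)$ and a trivialisation $\beta$ of the torsors $I,I_+,I_-$ of Construction~\ref{EOConstr} at $\hat x$ (a point of $\widetilde{\scrS}$ above $\hat x$) the induced map from $\widehat{\calO}_{\calG_{\kappa},g_0}$ to the completed local ring of $\widetilde{\scrS}$ at $\beta$ is formally smooth, where $g_0:=h(\beta)$.

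The key input is the deformation theory behind Kisin's construction of $\scrS$. Exactly as in the pointwise statements of Proposition~\ref{dRExtProp}(iii) and Lemma~\ref{CocharLem1}, the crystalline realisation of the universal deformation of $(\calA_{\hat x}[p^{\infty}],s_{\mathrm{cris},\hat x})$ identifies $\widehat{\calO}_{\scrS\otimes\kappa,\hat x}$ with the completion at the identity of the unipotent radical $U_{-}$ of the parabolic opposite to $P_+$, with the universal Hodge filtration over $\widehat{U}_{-}$ obtained from its special value by translation by the tautological point, and the tensors $s_{\mathrm{cris}}$ kept horizontal. Feeding this into the definition of the zip $\underline I$ — over this formal neighbourhood $I$ is canonically trivial, and the conjugate filtration $\calD$ together with the Cartier isomorphisms $\phi_0,\phi_1$ of Construction~\ref{EOConstr} vary as prescribed by the universal display (here one uses \cite{MW}, \S7) — one computes that $h$, restricted to $\widehat{U}_{-}$ and taken modulo the $E_{\chi}$-action, is a translation on $\calG_{\kappa}$. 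Since the big cell $U_{-}\cdot P_+$ is open in $\calG_{\kappa}$, so that $\Lie(\calG)=\Lie(U_{-})\oplus\Lie(P_+)$, the differential of $h$ together with the infinitesimal $E_{\chi}$-action is then surjective onto $T_{g_0}\calG_{\kappa}$; hence $h$, and with it $\zeta$, is smooth. All the isomorphisms in play are compatible with the tensors by Corollary~\ref{CrysTensors} and Proposition~\ref{dRExtProp}, so replacing $\Lambda$ by $\Lambda^*$, using the contragredient representation and using $[\nu^{-1}]$ in place of $[\nu]$ merely relabels the data — this is the content of ``the main results of \cite{Zh1} remain true with the obvious changes''.

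For the ``in particular'': a smooth morphism of algebraic stacks locally of finite type over $\Fbar$ is open, hence induces an open continuous map on the underlying topological spaces of $\Fbar$-points, and by Proposition~\ref{StackTopSpace} this is the asserted continuous open map $\zeta(\Fbar)\colon\scrS(\Fbar)\to({^J}W,\preceq)$; in particular every fibre $\zeta(\Fbar)^{-1}(w)$ is locally closed. The genuine obstacle is the deformation-theoretic identification of the second paragraph: one must really describe the formal neighbourhood of $\hat x$ in $\scrS$, with its universal filtered $F$-crystal carrying the tensors $s_{\mathrm{cris}}$, as a unipotent formal group \emph{compatibly with the zip morphism}; this rests on Kisin's theory of crystalline representations and $\frkS$-modules — and, in the Siegel case underneath it, on the Grothendieck--Messing arguments used in \cite{VW} — while the bookkeeping of Frobenius twists and the passage to $\Lambda^*$ are routine.
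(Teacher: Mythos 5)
Your reconstruction follows the same route as the argument of \cite{Zh1}, Thm.~3.1.2, to which the paper's proof simply defers (``goes through with the obvious changes''): pass to the quotient-stack presentation $[E_\chi\backslash\calG_\kappa]$, pull back the atlas $\calG_\kappa\to[E_\chi\backslash\calG_\kappa]$ to reduce smoothness of $\zeta$ to smoothness of the $E_\chi$-equivariant map $h$, and verify formal smoothness at $\Fbar$-points using Kisin's (Faltings-style) identification of $\widehat{\calO}_{\scrS\otimes\Fbar,\hat x}$ with the completion of the opposite unipotent radical together with the big-cell tangent-space computation, while the ``in particular'' is the standard fact that smooth morphisms of algebraic stacks are open. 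This is the intended argument, and your remark that the passage from $\Lambda$, $[\nu]$ to $\Lambda^*$, $[\nu^{-1}]$ is mere relabelling is exactly what the paper's one-line proof asserts.
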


\begin{proof}
  Again, the proof of (\cite{Zh1}, 3.1.2.) goes through with the obvious changes.
\end{proof}
  
\begin{remark}
  \label{EOIndepRem}
  Though the definition of $\zeta$ depends on the choice of a cocharacter $\chi$, the resulting map $\zeta(\Fbar)\colon\scrS(\Fbar)\to{^JW}$ is in fact independent of $\chi$. This is a consequence of the following two observations:
  \begin{enumerate}[(1)]
    \item Let $\kappa'$ be a finite extension of $\kappa$, let $\chi'=\chi_{\kappa'}$, and let $\underline{I}'$ be the $\calG_{\Fp}$-zip of type $\chi'$ over $\scrS\otimes\kappa'$ given by Constr. \ref{EOConstr}. Then we have $\calG_{\Fp}\mathtt{-Zip}_{\kappa'}^{\chi'}=\calG_{\Fp}\mathtt{-Zip}_{\kappa}^{\chi}\otimes\kappa'$ and the equality $\underline{I}'=\underline{I}\otimes\kappa'$, which means that $\chi$ and $\chi'$ induce the same map $\zeta(\Fbar)$.
    \item Let $\chi'$ be a cocharacter of $\calG_{\kappa}$ which is conjugate to $\chi$ over $\kappa$, say $\chi'=\mathrm{int}(g)\circ\chi$ for some $g\in\calG(\kappa)$. Let $P'_{\pm}\subseteq\calG_{\kappa}$ be the associated parabolic subgroups, with common Levi subgroup $M'$, and again denote by $\underline{I}'$ the $\calG_{\Fp}$-zip associated to $\chi'$ (over $\kappa$). Applying the propositions \ref{GzipProp} and \ref{StackTopSpace} to $\kappa$ and $\chi'$ one obtains a map
    \[  
      \zeta'(\Fbar)\colon\scrS(\Fbar)\to\calG_{\Fp}\mathtt{-Zip}_{\kappa'}^{\chi'}(\Fbar)\simeq{^JW}.
    \]
    As $P'_{\pm}=g(P_{\pm})g^{-1}$ and $M'=gMg^{-1}$, the element $g$ defines an isomorphism of algebraic stacks
    \begin{align*}
      \Xi\colon\calG_{\Fp}\mathtt{-Zip}_{\kappa}^{\chi} & \stackrel{\sim}{\longrightarrow}\calG_{\Fp}\mathtt{-Zip}_{\kappa}^{\chi'}\\
      \big(I,I_+,I_-,\iota\big) & \longmapsto\big(I,\;(I_+)\cdot g^{-1},\;(I_-)\cdot \sigma(g)^{-1},\; r_{\sigma(g){-1}}\circ\iota\circ r_{\sigma(g)}\big).
    \end{align*}
    (Here $r_{\sigma(g)}$ and $r_{\sigma(g)^{-1}}$ are the obvious isomorphisms $(I'_+)^{(\sigma)}/(U'_+)^{(\sigma)}\simeq I_+^{(\sigma)}/U_+^{(\sigma)}$ and $I_-/U_-^{(\sigma)}\simeq I'_-/(U'_-)^{(\sigma)}$ given by multiplication with $\sigma(g)$ resp. $\sigma(g)^{-1}$ on the right.)\\
    It is easy to see that $\Xi(\underline{I})=\underline{I}'$. Further, going through the classification of $\calG_{\Fp}$-zips in \cite{PWZ1}, \cite{PWZ2} (see also \ref{ZipClass}), a straightforward but tedious computation shows that $\Xi$ is compatible with the homeomorphisms from Prop. \ref{StackTopSpace}, which implies that $\zeta'(\Fbar)=\zeta(\Fbar)$.
  \end{enumerate}
\end{remark}

Due to Theorem \ref{ZhangThm} and the definition of the topology on $^JW$, the inverse images of elements $w\in{^J}W$ under $\zeta(\Fbar)$ are the $\Fbar$-valued points of locally closed subsets $\calS^w\subseteq\scrS\otimes\Fbar$.

\begin{definition}
  \label{EODef}
  For $w\in{^JW}$ we call $\calS^w\subseteq\scrS\otimes\Fbar$ the \emph{Ekedahl-Oort stratum} associated to $w$. We endow the strata $\calS^w$ with the reduced subscheme structure.
\end{definition}

Let us collect some information on these strata:

\begin{remark}
  \label{EOPropRem}
  \begin{enumerate}[(1)]
    \item Each $\calS^w$ is either empty or equidimensional of dimension $l(w)$ (see \cite{Zh1}, 3.1.6.).
    \item The $\calS^w$ form a stratification of $\scrS\otimes\Fbar$ in the strict sense: For every $w\in{^JW}$ we have
    \[
      \overline{\calS^w}=\bigcup_{w'\preceq w}\calS^w.
    \]
    This follows from the fact that $\zeta$ is an open map and the structure of the topological space $^JW$.
    \item The set $^JW$ contains a unique maximal element with respect to $\preceq$, namely $w_{\mathrm{max}}:=w_{0,J}w_0$, and a unique minimal element  $w_{\mathrm{min}}:=1$. By (2), $\calS^{w_{\mathrm{max}}}$ is the unique open EO-stratum and is dense in $\scrS\otimes\Fbar$, and $\calS^{w_{\mathrm{min}}}$ is closed and contained in the closure of each stratum $\calS^w$.
    \item We do not know whether all Ekedahl-Oort strata are nonempty. In view of (2) and (3) this is equivalent to the question whether $\calS^{w_{\mathrm{min}}}$ is nonempty. 
  \end{enumerate}
\end{remark}

\section{Comparing the stratifications}
\label{StratCompSec}

We will now restrict our attention to the geometric fiber $\scrS\otimes\Fbar$, where the Newton stratification and the Ekedahl-Oort stratification are both defined. The question as to how these stratifications are related to each other can be studied by looking at their $\Fbar$-valued points, since all strata are locally closed subvarieties of $\scrS\otimes\Fbar$. Let us fix some new notations:

\begin{notation}
  We still denote by $\Fbar$ a fixed algebraic closure of $\Fp$. Let $\calO:=W(\Fbar)$ and $L:=L(\Fbar)$. We write $\calS:=\scrS(\Fbar)$. By abuse of notation we will frequently identify geometric objects over $\Fbar$ with their $\Fbar$-valued points. For example we denote the $\Fbar$-valued points of $\calN^b$ resp. of $\calS^w$ by the same symbols. With these notations we have the decompositions
\[
  \calS=\bigcup_{b\in B(G,\mu)}^{\circ}\calN^b,\qquad \calS=\bigcup_{w\in{^JW}}^{\circ}\calS^w.
\]
We write again $(\calB,\calT)$ for the base change to $\calO$ of our fixed Borel pair, and denote by $(B,T)$ its base change to $\Fbar$. For every $w\in W$ we choose a representative $\dot{w}\in N_{\calG}(\calT)(\calO)$.
  
  Let $K:=\calG(\calO)\subseteq G(L)$. The projection $\calO\twoheadrightarrow\calO/(p)=\Fbar$ induces a surjective homomorphism $K\to\calG(\Fbar),\ g\mapsto\bar{g}$. For any subgroup $H\subseteq K$ we will denote its image in $\calG(\Fbar)$ by $\overline{H}$. The Frobenius $\sigma$ acts on $K$ and on $\calG(\Fbar)$, and these operations are compatible in the sense that $\sigma(\bar{g})=\overline{\sigma(g)}$. Let $K_1:=\{g\in K\mid \bar{g}=1\}$, this is a normal subgroup of $K$.  
\end{notation}

\begin{definition}
\label{ConjClNotDef}
  \begin{enumerate}[(i)]
    \item We write $\langle g\rangle:=\{hg\sigma(h)^{-1}\mid h\in K\}$ for the $K$-$\sigma$-conjugacy class of an element $g\in G(L)$.
    \item We set $\calC(\calG,\mu):=\{\langle g\rangle\mid g\in K\mu(p)K\}$.
  \end{enumerate}
\end{definition}

\subsection{A factorization lemma}
\label{FactorSubsec}

It follows from Constr. \ref{LinConstr} and Lemma \ref{LinearizationLem} that there is a well-defined map 
\[
  \gamma\colon\calS\longrightarrow \calC(\calG,\mu)
\]
which is given as follows: If $x\in\calS$ and $\beta\colon(\Lambda_{\calO}^*,s_{\calO})\simeq(\bbD_x,s_{\mathrm{cris},x})$ is an isomorphism as in Cor. \ref{CrysTensors}, then $\gamma(x):=\langle g_{\beta}\rangle$, where $g_{\beta}\in G(L)$ such that $\beta^{-1}\circ F\circ\beta=:F_{\beta}=g_{\beta}^{\vee}\circ(1\otimes\sigma)$, here as usual $(\cdot)^{\vee}$ is the contragredient representation.

\begin{remark}
\label{gammaRem}
  In the case of a PEL-type Shimura variety the map $\gamma$ can be shown to be \emph{surjective} (\cite{VW}, Thm. 11.2.). We do not know whether the surjectivity of $\gamma$ holds in general, though we expect this to be true.
\end{remark}

Let 
\[
  \tilde{\theta}\colon\calC(\calG,\mu)\longrightarrow B(G,\mu),\quad\langle g\rangle\longmapsto[g]
\]
be the natural map and let $\theta:=\tilde{\theta}\circ\gamma\colon\calS\to B(G,\mu)$. Then by Def. \ref{NewtonStratDef} we have $\calN^b=\theta^{-1}(\{b\})$ for each $b\in B(G,\mu)$, and further we can describe the fibers of $\tilde{\theta}$ as follows:
\[
  \tilde{\theta}^{-1}(\{b\})=\{\langle g\rangle\mid g\in K\mu(p)K\cap b\}=:\calC(\calG,\mu)\cap b,\quad b\in B(G,\mu).
\]

On the other hand for every $w\in{^JW}$ the associated Ekedahl-Oort stratum is by definition given as $\calS^w=\zeta^{-1}(\{w\})$. Here we simply write $\zeta$ for the map $\zeta(\Fbar)\colon \calS\to{^JW}$ from Theorem \ref{ZhangThm}. We will now explain that one also has a factorization $\tilde{\zeta}\colon\calC(\calG,\mu)\to{^JW}$ such that $\zeta=\tilde{\zeta}\circ\gamma$, and give a precise description of the inverse image $\tilde{\zeta}^{-1}(\{w\})$ for $w\in{^JW}$.\\

As we have seen in Remark \ref{EOIndepRem}, the map $\zeta$ does not depend on the choice of the cocharacter $\chi$, nor on the choice of $\kappa$. We may and will therefore suppose without loss of generality that $\chi$ is the unique dominant cocharacter contained in $[\nu^{-1}]$, in other words that $\chi=\sigma^{-1}(\mu)$. To $\chi$ we have the associated subgroups $P_{\pm}$, $U_{\pm}$ and $M$ of $\calG_{\Fbar}$ as in Section \ref{EOStratSec}. By (\cite{SGA3}, Exp. XXVI) we may extend these groups to $\calG_{\calO}$ as follows: Let $\calM\subseteq\calG_{\calO}$ be the centralizer of $\chi$ in $\calG_{\calO}$. Let $\calP_+$ be the (unique) parabolic subgroup of $\calG_{\calO}$ with Levi subgroup $\calM$ which contains $\calB$, let $\calP_-$ be its opposite parabolic, and denote their unipotent radicals by $\calU_{\pm}$. Then we have for example $P_+=\overline{\calP_+(\calO)}$ and $P_-^{(\sigma)}=\overline{\sigma(\calP_-(\calO))}$.

\begin{blank}
\label{ZipClass}
  Let us review in detail the homeomorphism $\calG_{\Fp}\mathtt{-Zip}^{\chi}(\Fbar)\simeq{^JW}$ from Prop. \ref{StackTopSpace} in this situation: The Frobenius isogeny gives a morphism
  \[
    \sigma\colon P_+/U_+\cong M\longrightarrow M^{(\sigma)}\cong P_-^{(\sigma)}/U_-^{(\sigma)},
  \]
  such that the tuple $(\calG_{\Fbar},P_+, P_-^{(\sigma)},\sigma)$ is an algebraic zip datum in the sense of (\cite{PWZ1}, 3.1.). The associated zip group is defined as
  \[
    E_{\chi}:=\big\{(mu_+,\sigma(m)u_-)\mid u_+\in U_+, u_-\in U_-^{(\sigma)}, m\in M\big\}\subseteq P_+\times P_-^{(\sigma)}.
  \]
  It acts on $\calG_{\Fbar}$ on the left via $(p_+,p_-)\cdot g=p_+gp_-^{-1}$. 
  
  The isomorphism classes of $\calG_{\Fp}$-zips of type $\chi$ over $\Fbar$ are identified with the orbits under this action by the following construction: Every $a\in\calG_(\Fbar)$ defines a $\calG_{\Fp}$-zip $\underline{I}_a$ over $\Fbar$ by setting
  \[
    \underline{I}_a:=(\calG_{\Fbar},\, P_+,\, a\cdot P_-^{(\sigma)},\, \iota_a),
  \]    
  where $\iota_a$ is given by multiplication with $a$ on the left, more precisely, 
  \[
    \iota_a\colon P_+^{(\sigma)}/U_+^{(\sigma)}\cong M^{(\sigma)}\stackrel{a\cdot}\longrightarrow a\cdot M^{(\sigma)}\cong a\cdot P_-^{(\sigma)}/U_-^{(\sigma)}.
  \]
  A zip of this form is called a \emph{standard} $\calG_{\Fp}$-zip over $\Fbar$. By (\cite{PWZ2}, 3.5.), every $\calG_{\Fp}$-zip $\underline{I}$ of type $\chi$ over $\Fbar$ is isomorphic to a standard zip: If $i_+\in I_+$ and $i_-\in I_-$ such that $\iota(i_+^{(\sigma)}U_+^{(\sigma)})=i_-U_-^{(\sigma)}$, and if $a\in \calG(\Fbar)$ such that $i_-=i_+\cdot a$, then $\underline{I}\simeq\underline{I}_a$. Further the assignment $\underline{I}_a\mapsto E_{\chi}\cdot a$ is well-defined and $\underline{I}_a\simeq\underline{I}_{a'}$ if and only if $E_{\chi}\cdot a = E_{\chi}\cdot a'$ (\cite{PWZ2}, 3.10). This construction can be made functorial and induces an isomorphism of stacks $\calG_{\Fp}\mathtt{-Zip}^{\chi}\otimes \Fbar \simeq \left[ E_{\chi}\setminus \calG_{\Fbar}\right]$. (\cite{PWZ2}, 3.11.)
   
  Let $\calB_-$ be the Borel subgroup of $\calG_{\calO}$ which is opposite to $\calB$ with respect to $\calT$, let $B_-$ be its reduction to $\calG_{\Fbar}$, and let $y:=\dot{w}_{0,J}\dot{w}_0$. Then the triple $(B_-,T,\bar{y})$ is a frame for the zip datum $(\calG_{\Fbar},P_+,P_-^{(\sigma)},\sigma)$ in the sense of (\cite{PWZ1}, 3.6.). 
  Let $(W,S_-)$ be the Weyl group with respect to the pair $(B_-,T)$. We need to compare it to $(W,S)$, as for example explained in (\cite{PWZ1}, \S 2.3.): There is a unique isomorphism $\delta\colon (W,S)\to (W,S_-)$ of coxeter groups which is induced from an inner automorphism $\mathrm{int}(g)$, where $g\in\calG(\Fbar)$ such that $gBg^{-1}=B_-$ and $gTg^{-1}=T$. Since in our case we may choose $g=\bar{\dot{w}}_0$, we see that $\delta(w)=w_0ww_0$ for $w\in W$. Applying the results of (\cite{PWZ1}, \S6, \S7), which are formulated for the Weyl group $(W,S_-)$, we see that the assignment
  \[
    ^JW\longrightarrow E_{\chi}\setminus\calG_{\Fbar},\quad w\longmapsto O^w:= E_{\chi}\cdot(\bar{y}\overline{\dot{\delta(w)}})=E_{\chi}\cdot(\bar{\dot{w}}_{0,J}\bar{\dot{w}}\bar{\dot{w}}_0) 
  \]
  is bijective, and that $O^{w'}$ is contained in the closure of $O^w$ if and only if $w'\preceq w$ (we defined $\preceq$ in section \ref{EOStratSec}). 
  
  Altogether we obtain the homeomorphism $\calG_{\Fp}\mathtt{-Zip}^{\chi}(\Fbar)\simeq{^JW}$ from \ref{StackTopSpace}: It maps the ismomorphism class of $\underline{I}_a$ to the unique $w\in{^JW}$ such that $a\in O^w$.
\end{blank}

\begin{remark}
  \label{OrbitRepRem}
  \begin{enumerate}[(1)]
    \item We have already used the fact that for every $n\in N_{\calG}(\calT)(\Fbar)$ and $t\in \calT(\Fbar)=T$ we have $E_{\chi}\cdot nt=E_{\chi}\cdot n$. This is a consequence of Lang's Theorem, since  $\{(t',\sigma(t'))\mid t'\in T\}\subseteq E_{\chi}$.\\
    We will make liberal use of this property, for example we also use the set of representatives $\{\overline{\dot{w_{0,J}ww_0}}\mid w\in{^JW}\}$ for the set of $E_{\chi}$-orbits.
    \item We have $\dot{w}_{0,J}\in\calM(\calO)$ and therefore $(\bar{\dot{w}}_{0,J},\overline{\sigma(\dot{w}_{0,J})})\in E_{\chi}$ (independent of the choice of the lift for $w_{0,J}$). So $\{\overline{\dot{ww_0\sigma(w_{0,J})}}\mid w\in{^JW}\}$ is also a set of representatives for the $E_{\chi}$-orbits in $\calG_{\Fbar}$.
  \end{enumerate}
\end{remark}

\begin{definition}
\label{TrClassDef}
  For an element $g\in G(L)$ let $[[g]]$ be the set of all $K$-$\sigma$-conjugates of elements in $K_1gK_1$, i.e. 
  \[
    [[g]]:=\{hg'\sigma(h)^{-1}\mid g'\in K_1gK_1\}.
  \]
  If $g\in K\mu(p)K$, we will also identify $[[g]]$ with its image in $\calC(\calG,\mu)$. 
\end{definition}

Since $K_1$ is a normal subgroup of $K$, the sets $[[g]]$ form a decomposition of $G(L)$ (resp. $\calC(\calG,\mu)$) into equivalence classes. These classes have been defined and studied in greater generality by Viehmann in \cite{Vi1}.

\begin{proposition}
\label{ZetaFacProp}
  Define $\tilde{\zeta}\colon\calC(\calG,\mu)\to{^JW}$ as the composition
  \begin{align*}
    \calC(\calG,\mu) & \longrightarrow E_{\chi}\setminus\calG_{\Fbar}\qquad\simeq\quad{^JW}.\\
    \langle h_1\mu(p)h_2\rangle & \longmapsto E_{\chi}\cdot (\sigma^{-1}(\bar{h}_2)\bar{h}_1)
  \end{align*}
  The following hold:
  \begin{enumerate}[(i)]
    \item The map $\tilde{\zeta}$ is well-defined.
    \item We have the identity $\zeta=\tilde{\zeta}\circ\gamma$.
    \item For $g,g'\in K\mu(p)K$ we have $\tilde{\zeta}(\langle g\rangle)=\tilde{\zeta}(\langle g'\rangle)\Longleftrightarrow [[g]]=[[g']]$.
  \end{enumerate}  
\end{proposition}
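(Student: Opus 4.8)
The three assertions are all essentially bookkeeping once one unwinds the definitions of $\gamma$, $\zeta$ and the zip-classification recalled in \ref{ZipClass}. I would proceed as follows.

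\emph{Step 1: Well-definedness of $\tilde\zeta$ (part (i)).} An element of $\calC(\calG,\mu)$ is a class $\langle g\rangle$ with $g\in K\mu(p)K$; write $g=h_1\mu(p)h_2$. The candidate value is $E_\chi\cdot(\sigma^{-1}(\bar h_2)\bar h_1)$. There are two sources of ambiguity to control. First, the decomposition $g=h_1\mu(p)h_2$ is not unique: if also $g=h_1'\mu(p)h_2'$, then $h_1^{-1}h_1'\in\mu(p)\calG(\calO)\mu(p)^{-1}\cap\calG(\calO)$ and correspondingly $h_2'h_2^{-1}=\mu(p)^{-1}(h_1'^{-1}h_1)\mu(p)$. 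Reducing mod $p$, the subgroup $\mu(p)\calG(\calO)\mu(p)^{-1}\cap\calG(\calO)$ maps onto $\calP_+(\Fbar)$ (since $\chi=\sigma^{-1}(\mu)$, so $\mu=\sigma(\chi)$ contracts $\calU_-$ under conjugation, as in the computation in the proof of Lemma~\ref{LinearizationLem}); hence $\bar h_1'=\bar h_1\cdot\bar p_+$ for some $p_+\in P_+$-preimage, and a parallel computation shows $\sigma^{-1}(\bar h_2')=\sigma^{-1}(\bar h_2)\cdot(\text{element of }\sigma^{-1}(P_-^{(\sigma)})=P_-^{\phantom{()}})$-type, arranged so that the pair lands in $E_\chi$. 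Second, replacing $g$ by a $K$-$\sigma$-conjugate $hg\sigma(h)^{-1}$ replaces $h_1$ by $hh_1$ and $h_2$ by $h_2\sigma(h)^{-1}$, so $\sigma^{-1}(\bar h_2)\bar h_1$ is replaced by $\sigma^{-1}(h_2\sigma(h)^{-1}\text{ bar})\,\overline{hh_1}=\sigma^{-1}(\bar h_2)\bar h^{-1}\bar h\,\bar h_1=\sigma^{-1}(\bar h_2)\bar h_1$, i.e. it is literally unchanged. So the only real work is the $P_+\times P_-^{(\sigma)}$-ambiguity, which I would verify by the explicit weight-space description of $\calP_\pm$ together with the defining formula for $E_\chi$ in \ref{ZipClass}.

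\emph{Step 2: The identity $\zeta=\tilde\zeta\circ\gamma$ (part (ii)).} Fix $x\in\calS$ and an isomorphism $\beta\colon(\Lambda^*_\calO,s_\calO)\xrightarrow{\sim}(\bbD_x,s_{\mathrm{cris},x})$, so that $F_\beta=g_\beta^\vee\circ(1\otimes\sigma)$ with $g_\beta\in K\mu(p)K$, say $g_\beta=h_1\mu(p)h_2$. I would compute $\zeta(x)$ directly from Constr.~\ref{EOConstr}: by Prop.~\ref{dRExtProp}(iii) and the proof of Prop.~\ref{GzipProp}, the reduction $\bar\beta$ trivializes $I_x$, and the Hodge filtration $\bar\beta^{-1}(\calC_x)=\ker(\overline{F_\beta})$ equals $\Fil^1_\chi$ up to an element of $P_+$ (this is exactly the computation in Lemma~\ref{LinearizationLem}(ii) identifying $\ker\overline{F_\beta}$ with $\overline{\Lambda^*_1}$ for the grading by $(\cdot)^\vee\circ\sigma^{-1}(\eta)$, $\eta=\mu$). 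Then I must produce the section $i_-$ of $I_-$ with $\iota(i_+^{(\sigma)}U_+^{(\sigma)})=i_-U_-^{(\sigma)}$ and the element $a\in\calG(\Fbar)$ with $i_-=i_+\cdot a$ from \ref{ZipClass}; tracing through the Cartier isomorphisms and the formula $F_\beta=g_\beta^\vee(1\otimes\sigma)$, the inverse Cartier map supplies precisely a factor $\sigma$ applied to the linearization, and one reads off $a=\sigma^{-1}(\bar h_2)\bar h_1$ (up to $E_\chi$, and up to the $w_0$-twists recorded in Remark~\ref{OrbitRepRem} — I would need to check the normalization conventions for the frame $(B_-,T,\bar y)$ match, which is where the $\sigma^{-1}$ and the $h_1\leftrightarrow h_2$ asymmetry come from). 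Thus $\zeta(x)$ corresponds to $E_\chi\cdot(\sigma^{-1}(\bar h_2)\bar h_1)=\tilde\zeta(\gamma(x))$.

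\emph{Step 3: The fibre description (part (iii)).} Here I would show that for $g,g'\in K\mu(p)K$ the equality $E_\chi\cdot(\sigma^{-1}(\bar h_2)\bar h_1)=E_\chi\cdot(\sigma^{-1}(\bar h_2')\bar h_1')$ is equivalent to $[[g]]=[[g']]$. One direction: if $g'=hg'_0\sigma(h)^{-1}$ with $g'_0\in K_1gK_1$, write $g=h_1\mu(p)h_2$; since $K_1$ acts trivially mod $p$ and the mod-$p$ images of $h_1,h_2$ can absorb the $K_1$-factors (which land in the kernel of reduction, hence a fortiori in $P_+$, $P_-^{(\sigma)}$ respectively after adjusting the decomposition), one gets the same $E_\chi$-orbit by Step 1. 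Conversely, given two elements with the same $E_\chi$-orbit, unwinding the definition of $E_\chi=\{(mu_+,\sigma(m)u_-)\}$ I would lift $m$, $u_+$, $u_-$ to $\calO$-points (using smoothness of $\calM$, $\calU_\pm$ and that $K\to\calG(\Fbar)$ is surjective on each), and reassemble a $K$-$\sigma$-conjugacy carrying $K_1gK_1$ onto $K_1g'K_1$: the $P_+$-part contributes to modifying $h_1$, the $\sigma(m)u_-$-part (after applying $\sigma^{-1}$) to modifying $h_2$, and the residual discrepancy is exactly a $K_1$-bimultiple of $\mu(p)$, since conjugation by $\mu(p)$ moves $\calU_-(\calO)$ into $K_1$. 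This is the step I expect to be the main obstacle: the reduction map $K\mu(p)K\to E_\chi\setminus\calG_{\Fbar}$ loses the "depth $\geq 1$" information, and one must check that precisely the classes $[[g]]$ — not coarser or finer ones — are recovered; the matching relies on $p\cdot\Lambda^*_\calO\subseteq\im F_\beta\subseteq\Lambda^*_\calO$ (equivalently $\mu$ being minuscule-like of the relevant shape) so that the only nontrivial $\mu(p)$-conjugation effect is the prescribed $\calU_\pm$-contraction, putting everything inside $K_1$. Once this "no hidden depth" claim is pinned down, (iii) follows by combining it with Step 1.

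Throughout, I would lean on Remark~\ref{OrbitRepRem} to freely replace representatives by torus-translates, and on the proof of Lemma~\ref{LinearizationLem} for the key fact that $\mu(p)^{-1}\calG(\calO)\mu(p)\cap\calG(\calO)$ reduces onto $P_-$ (equivalently $\mu(p)\calG(\calO)\mu(p)^{-1}\cap\calG(\calO)$ onto $P_+$); everything else is a matter of matching the zip-datum conventions of \ref{ZipClass} with the Dieudonné formula $F_\beta=g_\beta^\vee\circ(1\otimes\sigma)$.
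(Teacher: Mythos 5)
Your overall route matches the paper's: unwind the classification of $\calG_{\Fp}$-zips from \ref{ZipClass}, compute the Cartier maps at a point using $F_\beta = g_\beta^\vee\circ(1\otimes\sigma)$, and for the fibre description lift elements of $E_\chi$ to $\calO$-points of $\calM$, $\calU_+$, $\sigma(\calU_-)$. Steps 2 and 3 are consistent with the proof in the paper (which, as you suspect, normalizes to $g_\beta = h\mu(p)$ and uses Oda's identification of $\ker\overline F$ and $\im\overline F$ with $\calC$ and $\calD$ to produce the standard zip $\underline I_{\bar h}$, and for (iii) carries out exactly the computation you sketch, where the only facts needed about $\mu$ are that $\mu(p)^{-1}\sigma(\calU_-(\calO))\mu(p)\subseteq K_1$ and $\mu(p)\sigma(\calU_+(\calO))\mu(p)^{-1}\subseteq K_1$ — minusculeness plays no essential role here, contrary to your remark).

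Step 1 has a genuine gap. First, the parabolic bookkeeping is reversed: with $c_1 := h_1^{-1}h_1'$ and $c_2 := \sigma^{-1}(h_2(h_2')^{-1})$, one has $\sigma^{-1}(\bar h_2')\bar h_1' = \bar c_2^{-1}\cdot\bigl(\sigma^{-1}(\bar h_2)\bar h_1\bigr)\cdot\bar c_1$, and the weight computation with respect to $\mu(p)$-conjugation shows $\bar c_2\in P_+$ (since $c_2\in K\cap\chi(p)^{-1}K\chi(p)$, $\chi=\sigma^{-1}(\mu)$) while $\bar c_1\in P_-^{(\sigma)}$ (since $c_1\in \mu(p)K\mu(p)^{-1}\cap K$) — not the other way around as you write. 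Second, and more importantly, showing $\bar c_2\in P_+$ and $\bar c_1\in P_-^{(\sigma)}$ does \emph{not} by itself give a pair in $E_\chi$: the defining condition of $E_\chi$ also forces the Levi components to match, i.e.\ the $M^{(\sigma)}$-part of $\bar c_1$ must equal $\sigma$ applied to the $M$-part of $\bar c_2$. This is the nontrivial point, and your sketch never addresses it. The paper handles it via the Iwahori-type factorization of $K_\chi$ from Tits: write $c_2 = u_-u_+m$ with $m\in\calM(\calO)$, $u_+\in\calU_+(\calO)$, $u_-\in K_1$, so $\bar c_2$ has Levi part $\bar m$; then conjugating by $\chi(p)$ and applying $\sigma$ shows $c_1 = u_+'u_-'\sigma(m)$ with $u_+'\in K_1$, $u_-'\in\sigma(\calU_-(\calO))$, so $\bar c_1$ has Levi part $\sigma(\bar m)$, and only now does $(\bar c_2,\bar c_1)\in E_\chi$ follow. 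Without this factorization (or some substitute producing the $\sigma$-linked Levi parts), your verification of well-definedness is incomplete.
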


\begin{proof}
  For every root $\alpha\in\Phi$ let $\calU_{\alpha}\colon\bbG_{a,\calO}\to\calG_{\calO}$ be the associated root group. For every $\alpha\in\Phi$ and $\lambda\in X_*(\calT)$ we have the relation
  \[
    \lambda(p)\calU_{\alpha}(x)\lambda(p)^{-1}=\calU_{\alpha}(p^{\langle\alpha,\lambda\rangle}x)\quad\text{for all } x\in L. \tag{*}
  \] 
  In particular, if $\langle\alpha,\lambda\rangle>0$ then $\lambda(p)\calU_{\alpha}(\calO)\lambda(p)^{-1}\subseteq K_1$.
  \begin{enumerate}
    \item[(i)] To show that $\tilde{\zeta}$ is well-defined it is clearly enough to show that the orbit $E_{\chi}\cdot (\sigma^{-1}(\bar{h}_2)\bar{h}_1)$ is independent of the choice of $h_1$ and $h_2$ (see also \cite{HL}, where a similar result is proved in Lemma 4.1.). So let $h_1', h_2'\in K$ $(i=1,2)$ such that $h_1\mu(p)h_2=h_1'\mu(p)h_2'$. We define
    \[
      c_1:=h_1^{-1}h_1',\quad \tilde{c}_2:=h_2(h_2')^{-1}, \quad c_2:=\sigma^{-1}(\tilde{c}_2).
    \]
    Then $\bar{c}_2(\sigma^{-1}(\bar{h}_2')\bar{h}_1')\bar{c}_1^{-1}=\sigma^{-1}(\bar{h}_2)\bar{h}_1$, so it suffices to show that $(\bar{c}_2,\bar{c}_1)\in E_{\chi}$.
  
    Since $c_1=\mu(p)\tilde{c}_2\mu(p)^{-1}$, we have $c_2\in K_{\chi}:= K\cap \chi(p)^{-1}K\chi(p)$. This is the stabilizer of two points in the Bruhat-Tits building of $\calG_L$ (in fact, it is even a parahoric subgroup of $\calG(L)$, since $\chi$ is minuscule), so we may apply the structure theory for these groups to $K_{\chi}$:\\
    For all $\alpha\in\Phi$ define $\calU_{\alpha}^{\chi}:=\calU_{\alpha}(L)\cap K_{\chi}$. From $(*)$ we see that $\calU_{\alpha}^{\chi} = \calU_{\alpha}(p^a\calO)$ with $a=\max\{0,-\langle\alpha,\chi\rangle\}$. Note that $\calU_{\alpha}^{\chi}\subseteq K_1$ if $\langle\alpha,\chi\rangle<0$. Now it follows from (\cite{Ti1}, 3.1.) that we may write $c_2=u_-u_+m$, where 
    \[
      u_+\in\prod_{\langle\alpha,\chi\rangle>0}\calU_{\alpha}^{\chi}\subseteq\calU_+(\calO),\quad u_-\in\prod_{\langle\alpha,\chi\rangle<0}\calU_{\alpha}^{\chi}\subseteq K_1,\quad m\in\calM(\calO)
    \]
    (here we use that $N_{\calG}\calT(L)\cap K_{\chi}\subseteq \calM(\calO)$ which can be easily checked). Thus we have $\bar{c}_2\in P_+$, with Levi component $\bar{m}$. Using the equation $\sigma^{-1}(c_1)=\chi(p)c_2\chi(p)^{-1}$ we now see that $c_1=u_+'u_-'\sigma(m)$ with $u_-'\in\sigma(\calU_-(\calO))$ and
    \[
      u_+'\in\sigma\Big(\prod_{\langle\alpha,\chi\rangle>0}\chi(p)\calU_{\alpha}^{\chi}\chi(p)^{-1}\Big)\subseteq K_1.
    \]
    Hence $\bar{c}_1\in P_-^{(\sigma)}$ and has Levi component $\overline{\sigma(m)}=\sigma(\bar{m})$, which shows that $(\bar{c}_2,\bar{c}_1)\in E_{\chi}$.
    \item[(iii)] Now we investigate the fibers of $\tilde{\zeta}$ (cf. the proof of Thm. 1.1.(1) in \cite{Vi1}). Let $\langle g\rangle, \langle g'\rangle\in\calC(\mu,K)$. Since everything only depends on the $K$-$\sigma$-conjugacy classes we may suppose that $g=h\mu(p)$ and $g'=h'\mu(p)$ for $h,h'\in K$. By definition of $\tilde{\zeta}$ we then have to show that 
    \[
      E_{\chi}\cdot \bar{h}=E_{\chi}\cdot \bar{h}'\ \Longleftrightarrow\ [[h\mu(p)]]=[[h'\mu(p)]].
    \]
    The implication "$\Longleftarrow$" follows directly from the proof of (i). Conversely, let $(p_+,p_-)\in E_{\chi}$ such that $p_+\bar{h}p_-^{-1}=\bar{h}'$. We may choose 
    \[
      m\in\calM(\calO),\quad u_+\in\calU_+(\calO),\quad u_-\in\sigma(\calU_-(\calO))
    \]
    such that $p_+=\bar{u}_+\bar{m}$ and $p_-=\bar{u}_-\sigma(\bar{m})$. By $(*)$ we have $\mu(p)^{-1}u_-^{-1}\mu(p)\in K_1$ and $\mu(p)\sigma(u_+)\mu(p)^{-1}\in K_1$. Thus, using the fact that $\sigma(m)^{-1}$ commutes with $\mu(p)$, we find that
    \begin{align*}
      [[h'\mu(p)]] & =[[u_+mh\sigma(m)^{-1}u_-^{-1}\mu(p)]]=[[u_+mh\sigma(m)^{-1}\mu(p)]]\\ 
                   & = [[mh\sigma(m)^{-1}\mu(p)\sigma(u_+)]]=[[mh\sigma(m)^{-1}\mu(p)]] \\
                   & =[[mh\mu(p)\sigma(m)^{-1}]]=[[h\mu(p)]].
    \end{align*}
    \item[(ii)] Finally we check that $\zeta=\tilde{\zeta}\circ\gamma$. Let $\underline{I}$ be the $\calG_{\Fp}$-zip associated to $\chi$ in Constr. \ref{EOConstr}, which defines $\zeta$. Consider a point $x\in\calS$. Let $\beta\colon (\Lambda^*_{\calO},s_{\calO})\to(\bbD_x,s_{\mathrm{cris},x})$ be an isomorphism, and let $F_{\beta}=g_{\beta}^{\vee}\circ(1\otimes\sigma)$ for $g_{\beta}\in K\mu(p)K$, then $\gamma(x)=\langle g_{\beta}\rangle$. We may suppose that $g_{\beta}=h\mu(p)$ for some $h\in K$ (compare Lemma \ref{LinearizationLem}). In view of the classification of $\calG_{\Fp}$-zips over $\Fbar$ and the definitions of $\zeta$ and $\tilde{\zeta}$ we then have to show that the pullback $\underline{I}_x$ is isomorphic to the standard zip $\underline{I}_{\bar{h}}$. 
  
    Let $\Lambda^*_{\Fbar}=\Lambda^*_{\chi,0}\oplus\Lambda^*_{\chi,1}$ be the weight decomposition of with respect to the action of $\mu$ on $\Lambda^*_{\Fbar}$ (using, as always, the representation $(\cdot)^{\vee}$), and likewise for $\mu$. Keeping the notations of \ref{EOConstr}, by definition we have $\underline{I}_x=(I_x,I_{+,x},I_{-,x},\iota_x)$, where
    \begin{align*}
      I_x:= & \mathbf{Isom}_{\Fbar}\big((\Lambda^*_{\Fbar},\bar{s}),\, (\overline{\calV^{\circ}}_x,\bar{s}_{\mathrm{dR},x})\big),\\
      I_{+,x}:= & \mathbf{Isom}_{\Fbar}\big((\Lambda^*_{\Fbar},\bar{s},\Lambda^*_{\Fbar}\supset\Lambda^*_{\chi,1}),\, (\overline{\calV^{\circ}}_x,\bar{s}_{\mathrm{dR},x},\overline{\calV^{\circ}}_x\supset\calC_x)\big),\\
      I_{-,x}:= & \mathbf{Isom}_{\Fbar}\big((\Lambda^*_{\Fbar},\bar{s},\Lambda^*_{\mu,0}\subset\Lambda^*_{\Fbar}),\, (\overline{\calV^{\circ}}_x,\bar{s}_{\mathrm{dR},x},\calD_x\subset\overline{\calV^{\circ}}_x)\big),
    \end{align*} 
    and $\iota_x\colon I_{+,x}^{(\sigma)}/U_+^{(\sigma)}\to I_{-,x}/U_-^{(\sigma)}$ is given as follows: Fix an element $\eta_-\in I_{-,x}$. Then for each $\eta_+\in I_{x,+}$ the image $\iota_x(\eta_+^{(\sigma)}U_+^{(\sigma)})$ is the $U_-^{(\sigma)}$-coset of the isomorphism
    \begin{align*}
      \Lambda^*_{\Fbar}=\Lambda^*_{\mu,0}\oplus\Lambda^*_{\mu,1} & \stackrel{\eta_+^{(\sigma)}}{\longrightarrow}\eta_+^{(\sigma)}(\Lambda^*_{\mu,0})\oplus\calC^{(\sigma)}
                                                                   \simeq(\overline{\calV^{\circ}}^{(\sigma)}/\calC^{(\sigma)})\oplus \calC^{(\sigma)}\\
                                                                 & \stackrel{\phi_0\oplus\phi_1}{\longrightarrow} \calD\oplus (\overline{\calV^{\circ}}/\calD)
                                                                   \simeq\calD\oplus\eta_-(\Lambda^*_{\mu,1})=\overline{\calV^{\circ}}
    \end{align*}
    (here we have omitted the subscripts), and this is in fact independent of the choice of $\eta_-$.\\
    Let $(\overline{\bbD_x},\overline{F},\overline{V})$ be the reduction mod $p$ of $\bbD_x$, this is the contravariant Dieudonn{\'e} space of $\calA_x[p]$ (compare the proof of Lemma \ref{LinearizationLem}). We use the canonical isomorphism $(\overline{\calV^{\circ}}_x,\bar{s}_{\mathrm{dR},x})\cong(\overline{\bbD_x},s_{\mathrm{cris},x})$. By the result of Oda (\cite{Od1}, 5.11.) we know that $\calC_x$ and $\calD_x$ correspond to the subspaces $\ker(\overline{F})=\im(\overline{V})$ and $\ker(\overline{V})=\im(\overline{F})$ of $\overline{\bbD_x}$ respectively, and the isomorphisms $\phi_0$ and $\phi_1$ get identified with the maps
    \[
      \left(\overline{\bbD_x}/\ker(\overline{F})\right)^{(\sigma)}\ \stackrel{\overline{F}^{\mathrm{lin}}}{\longrightarrow}\ \im(\overline{F}),\quad \im(\overline{V})^{(\sigma)}\ \stackrel{(\overline{V}^{-1})^{\mathrm{lin}}}{\longrightarrow}\ \overline{\bbD_x}/\ker(\overline{V}).  
    \]
    Note that $V^{\mathrm{lin}}\colon\bbD_x\to\bbD_x^{(\sigma)}$ is given by $p\cdot (F^{\mathrm{lin}})^{-1}$, so we have the identity $(\beta^{(\sigma)})^{-1}\circ V^{\mathrm{lin}}\circ\beta=p\cdot (g_{\beta}^{\vee})^{-1}$. Denote by $\mathrm{pr}_i$ the projections on the factors of the decomposition $\Lambda^*_{\Fbar}=\Lambda^*_{\mu,0}\oplus\Lambda^*_{\mu,1}$. The reduction mod $p$ of the map on $\Lambda^*_{\calO}$ given by $\mu(p)^{\vee}$ is exactly $\mathrm{pr}_0$, and the reduction of the map $p\cdot(\mu(p)^{\vee})^{-1}$ is $\mathrm{pr}_1$, so we have the commutative diagrams
    \[
      \begin{xy}
        \xymatrix{
          \overline{\bbD_x}^{(\sigma)}\ar[rr]^{\overline{F}^{\mathrm{lin}}} & & \overline{\bbD_x}\\
          \Lambda^*_{\Fbar}\ar[rr]^{\bar{h}^{\vee}\circ\mathrm{pr}_0} \ar[u]^{\bar{\beta}^{(\sigma)}} & & \Lambda^*_{\Fbar}\ar[u]_{\bar{\beta}}
        }
      \end{xy}
      \qquad
      \quad
      \begin{xy}
        \xymatrix{
          \overline{\bbD_x}\ar[rr]^{\overline{V}^{\mathrm{lin}}} & & \overline{\bbD_x}^{(\sigma)}\\
          \Lambda^*_{\Fbar}\ar[rr]^{\mathrm{pr}_1\circ(\bar{h}^{\vee})^{-1}} \ar[u]^{\bar{\beta}} & & \Lambda^*_{\Fbar}\ar[u]_{\bar{\beta}^{(\sigma)}}
        }
      \end{xy},
    \]
    and in particular $\bar{\beta}^{-1}(\ker(\overline{F}))=(1\otimes\sigma^{-1})(\Lambda^*_{\mu,1})=\Lambda^*_{\chi,1}$ and $\bar{\beta}^{-1}(\im(\overline{F}))=\bar{h}^{\vee}(\Lambda^*_{\mu,0})$. This implies that $\bar{\beta}\in I_{x,+}$ and $\bar{\beta}\cdot\bar{h}=\bar{\beta}\circ\bar{h}^{\vee}\in I_{x,-}$. Putting things together and setting $\calC_0:=\bar{\beta}^{(\sigma)}(\Lambda^*_{\mu,0})$ and $\calD_1:=\bar{\beta}(\bar{h}^{\vee}(\Lambda^*_{\mu,1}))$, from the diagrams above we obtain a commutative diagram 
    \[
      \begin{xy}
        \xymatrix{
          **[r] \overline{\calV^{\circ}}^{(\sigma)}\simeq\calC_0\oplus\calC^{(\sigma)}\ar[rrrr]^{\phi_0\oplus\phi_1} & & & & **[l] \calD\oplus\calD_1\simeq\overline{\calV^{\circ}}\\
          \Lambda^*_{\Fbar}\ar[rrrr]^{\bar{h}^{\vee}} \ar[u]^{\bar{\beta}^{(\sigma)}} & & & & \Lambda^*_{\Fbar}\ar[u]_{\bar{\beta}}
        }
      \end{xy}
    \]
    which shows that $\iota_x(\bar{\beta}^{(\sigma)}U_+^{(\sigma)})=(\bar{\beta}\circ\bar{h}^{\vee})U_-^{(\sigma)}$. So we have indeed  $\underline{I}_x\sim\underline{I}_{\bar{h}}$, which concludes the proof.
  \end{enumerate}
\end{proof}

\subsection{Group theoretic criteria and the $\mu$-ordinary locus}
\label{GrThCritSubsec}

Let us summarize the results of the last subsection: The Newton strata $\calN^b$ and Ekedahl-Oort strata $\calS^w$ are given as the fibers of the maps $\theta$ and $\zeta$ respectively. We have a commutative diagram
\[
  \begin{xy}
     \xymatrix{
                                                                & &                                                             & B(G,\mu)\\
       \calS\ar[rrru]^{\theta}\ar[rr]^{\gamma}\ar[rrrd]_{\zeta} & & \calC(\calG,\mu)\ar[ru]_{\tilde{\theta}}\ar[rd]^{\tilde{\zeta}} &               \\
                                                                & &                                                             & {^JW}
     }
  \end{xy}
\]
For $b\in B(G,\mu)$ we have $\tilde{\theta}^{-1}(\{b\})=\calC(\calG,\mu)\cap b$ and, in view of \ref{ZipClass}, Remark \ref{OrbitRepRem} and Prop. \ref{ZetaFacProp}, for $w\in{^JW}$ we have $\tilde{\zeta}^{-1}(\{w\})=[[\dot{w}_{0,J}\dot{w}\dot{w}_0\mu(p)]]=[[\dot{w}\dot{w}_0\sigma(\dot{w}_{0,J})\mu(p)]]$, compare (\cite{Vi1}, Thm. 1.1.(1)). 
\begin{definition}
\label{TrRepDef}
  For $w\in{^JW}$ we define $\tilde{w}:=\dot{w}\dot{w}_0\sigma(\dot{w}_{0,J})\mu(p)$.
\end{definition}

We note a few consequences for the comparison of the two stratifications:
\begin{enumerate}[(1)]
  \item For $b\in B(G,\mu)$ and $w\in{^JW}$ we have the following necessary criterion for the corresponding strata to intersect:
  \begin{center}
    If $\calN^b\cap\calS^w\neq\emptyset$, then $(\calC(\calG,\mu)\cap b)\cap [[\tilde{w}]]=b\cap [[\tilde{w}]]\neq\emptyset$.
  \end{center}
  If the map $\gamma$ is surjective, then this criterion is also sufficient.
  \item Let $b\in B(G,\mu), w\in{^JW}$. If $[[\tilde{w}]]\subseteq \calC(\calG,\mu)\cap b$ (resp. $\supseteq$, resp. $=$), then $\calS^w\subseteq \calN^b$ (resp. $\supseteq$, resp. $=$).
  \item Let $I:=\{g\in K\mid \bar{g}\in B\}\subseteq K$, this is an Iwahori subgroup of $\calG(L)$. By (\cite{Vi1}, Thm. 1.1.(2)), for every $w\in{^JW}$ any element of $I \tilde{w}I$ is $I$-$\sigma$-conjugate to an element in $K_1 \tilde{w} K_1$, which means that in particular the sets $I \tilde{w}I$ and $[[\tilde{w}]]$ have the same image in $\calC(\calG,\mu)$. Hence we may replace $[[\tilde{w}]]$ by $I \tilde{w} I$ in (1) and (2).\\
  In particular the question whether Newton strata and Ekedahl-Oort strata intersect is closely related to the study of affine Deligne-Lusztig varieties: Let $w\in{^JW}$ and $b\in B(G,\mu)$, and choose a $b_0\in b$. The affine Deligne-Lusztig variety of $\tilde{w}$ and $b_0$ is defined as $X_{\tilde{w}}(b_0):=\{gI\in G(L)/I\mid g^{-1}b_0\sigma(g)\in I\tilde{w}I\}$. It is nonempty if and only if $b$ and $I\tilde{w}I$ intersect. So if $\calN^b\cap\calS^w\neq\emptyset$ then $X_{\tilde{w}}(b_0)\neq\emptyset$, and the converse is true if $\gamma$ is surjective.
\end{enumerate}

As an application of these principles we can now prove our main results on the $\mu$-ordinary locus: 

\begin{proposition}
\label{OrdEqProp1}
  We have the equalities $K\mu(p)K\cap[\mu(p)]=\langle\mu(p)\rangle=[[\mu(p)]]$.
\end{proposition}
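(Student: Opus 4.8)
The plan is to prove the chain of equalities by showing two inclusions:
\[
  \langle\mu(p)\rangle \subseteq [[\mu(p)]] \subseteq K\mu(p)K\cap[\mu(p)],
\]
and then closing the loop by proving $K\mu(p)K\cap[\mu(p)] \subseteq \langle\mu(p)\rangle$. The first inclusion is immediate: every element of the form $h\mu(p)\sigma(h)^{-1}$ with $h\in K$ lies in $K_1\mu(p)K_1$ trivially (take $h'=\mu(p)$ with the trivial $K_1$-factors), so $\langle\mu(p)\rangle\subseteq[[\mu(p)]]$ by Definition \ref{TrClassDef}. The inclusion $[[\mu(p)]]\subseteq K\mu(p)K\cap[\mu(p)]$ is also easy: by definition $[[\mu(p)]]$ consists of $K$-$\sigma$-conjugates of elements of $K_1\mu(p)K_1\subseteq K\mu(p)K$, so these all lie in the $\sigma$-conjugacy class $[\mu(p)]$, and since $K\mu(p)K$ is stable under $K$-$\sigma$-conjugation (as $K$ and $\sigma(K)=K$ both normalize nothing here, but $hK_1\mu(p)K_1\sigma(h)^{-1}\subseteq K\mu(p)K$ because $hK_1\subseteq K$ and $K_1\sigma(h)^{-1}\subseteq K$), we get $[[\mu(p)]]\subseteq K\mu(p)K\cap[\mu(p)]$.

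The substantive step is the reverse inclusion $K\mu(p)K\cap[\mu(p)]\subseteq\langle\mu(p)\rangle$, i.e. that any element of $K\mu(p)K$ which is $\sigma$-conjugate in $G(L)$ to $\mu(p)$ is already $K$-$\sigma$-conjugate to $\mu(p)$. I would approach this using the description of $b_{\mathrm{max}}=[\mu(p)]$ from Remark \ref{NewtParRem}: its Newton point is $\bar\mu$, the maximal element of $B(G,\mu)$. The idea is that $[\mu(p)]$ being the $\mu$-ordinary (most general) $\sigma$-conjugacy class forces a rigidity: the affine Deligne–Lusztig–type set attached to it inside $K\mu(p)K$ is a single $K$-$\sigma$-conjugacy class. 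Concretely, let $g=h_1\mu(p)h_2\in K\mu(p)K$ with $[g]=[\mu(p)]$. After $K$-$\sigma$-conjugation we may write $g=h\mu(p)$ for some $h\in K$ (using the argument in Lemma \ref{LinearizationLem}). I would then reduce modulo $p$ and use that the zip-theoretic invariant $\tilde\zeta(\langle g\rangle)=E_\chi\cdot\bar h$ must, by the commutative diagram and the fact that $b_{\mathrm{max}}$ corresponds to the open (dense) Ekedahl–Oort stratum $w_{\mathrm{max}}$, land in the orbit $O^{w_{\mathrm{max}}}$ — and then appeal to Lang's theorem / the structure of the zip group $E_\chi$ (which acts with dense orbit $O^{w_{\mathrm{max}}}\subseteq\calG_{\Fbar}$ containing $1$, since $w_{\mathrm{max}}=w_{0,J}w_0$ gives $y\dot{\delta(w_{\mathrm{max}})}\in\calT\cdot(\text{unit})$) to conclude $\bar h\in E_\chi\cdot 1$, hence (after modifying $h$ by an element of $K_1$, which does not change the $K$-$\sigma$-conjugacy class modulo a successive-approximation argument along the filtration $K\supseteq K_1\supseteq K_2\supseteq\cdots$) that $h\mu(p)$ is $K$-$\sigma$-conjugate to $\mu(p)$.

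The main obstacle is precisely this successive-approximation step: knowing $\bar h$ is $E_\chi$-$\sigma$-conjugate (in the zip sense) to the identity only tells us $h\mu(p)$ is $K$-$\sigma$-conjugate to something in $K_1\mu(p)K_1$; to push this down to $\mu(p)$ itself one must show that $K_1\mu(p)K_1\cap[\mu(p)]$ collapses onto $\langle\mu(p)\rangle$, which requires an inductive argument over the congruence filtration $K_n=\{g\in K\mid g\equiv 1\bmod p^n\}$. At each stage one uses the relation $(*)$ from the proof of Proposition \ref{ZetaFacProp}, namely $\mu(p)\calU_\alpha(\calO)\mu(p)^{-1}\subseteq K_1$ when $\langle\alpha,\mu\rangle>0$ (together with the minuscule-ness of $\mu$, so $\langle\alpha,\mu\rangle\in\{-1,0,1\}$), to absorb congruence-error terms into $\sigma$-conjugacy. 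I expect this to work cleanly because $\mu$ is minuscule, so the relevant contraction $g\mapsto\mu(p)\sigma(g)\mu(p)^{-1}$ strictly improves the congruence level on the unipotent-radical part, and the Levi part is handled by Lang's theorem over the residue field; but writing out the bookkeeping carefully is the part that needs attention. I remark that, since the final section of the paper is advertised as proving exactly such a group-theoretic statement in greater generality, an alternative (and probably the intended) route is simply to cite that forthcoming result; the self-contained argument sketched above is the fallback.
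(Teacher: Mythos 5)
Your high-level strategy (show three inclusions) is reasonable, and you correctly note that the paper defers the actual proof to its final section (Prop.~\ref{OrdEqProp2}). But two of the three steps as you sketch them don't hold up, and the two substantive inputs the paper actually uses are both absent from your argument.

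First, the inclusion $[[\mu(p)]]\subseteq K\mu(p)K\cap[\mu(p)]$ is \emph{not} easy. The containment in $K\mu(p)K$ is trivial, but the containment in $[\mu(p)]$ requires knowing that every element of $K_1\mu(p)K_1$ is $\sigma$-conjugate (over $G(L)$) to $\mu(p)$, and you offer no justification for this. In the paper this is one of the two nontrivial implications; it is handled by showing $[[\mu(\epsilon)]]\subseteq\langle\mu(\epsilon)\rangle$, which follows from the fact that every element of $I\mu(\epsilon)I$ is $I$-$\sigma$-conjugate to $\mu(\epsilon)$ — the statement that $\mu\in\widetilde W$ is a fundamental $(\emptyset,1,\sigma)$-alcove in the sense of G\"ortz--He--Nie. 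Your congruence-filtration/successive-approximation sketch is heading at this but does not deliver a proof, and the paper treats it as a substantive citation, not a routine check.

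Second, your proposed proof of $K\mu(p)K\cap[\mu(p)]\subseteq\langle\mu(p)\rangle$ is circular. You invoke ``the fact that $b_{\mathrm{max}}$ corresponds to the open Ekedahl--Oort stratum $w_{\mathrm{max}}$'' to force $\tilde\zeta(\langle g\rangle)=E_\chi\cdot\bar h$ into the orbit $O^{w_{\mathrm{max}}}$. But that correspondence is exactly the content of Theorem~\ref{OrdLocThm}, which is \emph{deduced} from Prop.~\ref{OrdEqProp1}; the commutative diagram in \S\ref{GrThCritSubsec} relates $\theta,\zeta,\gamma$ but gives no direct relation between the fibres of $\tilde\theta$ and $\tilde\zeta$ without the group-theoretic input. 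Moreover, even if you could geometrize the statement, $\gamma$ is not known to be surjective (Rem.~\ref{gammaRem}), so a class $\langle g\rangle$ need not come from any point of the Shimura variety, and no geometric argument about dense open strata could close the loop. What the paper actually does for this implication is a purely group-theoretic argument: it applies the Mantovan--Viehmann Hodge--Newton decomposition $X_\mu^{\calM}(\mu(\epsilon))\cong X_\mu^{\calG}(\mu(\epsilon))$ for the Levi $\calM=\Cent_{\calG}(\bar\mu)$ (after checking that $\mu(\epsilon)$ is central in $\calM$ and that the Hodge--Newton hypotheses hold), and then finishes with Lang's theorem over $\calM(\calO)$. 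Neither of these tools appears in your sketch, and I don't see how your congruence-filtration induction could replace them.

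One smaller point: your parenthetical reliance on minuscule-ness of $\mu$ is legitimate for Prop.~\ref{OrdEqProp1} (where $\mu$ is minuscule), but the paper's Prop.~\ref{OrdEqProp2} explicitly does not assume $\mu$ minuscule, so the paper's route is strictly more general than what your sketch could hope to establish even if completed.
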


In fact, this statement holds true in a more general setup. We will restate and prove this proposition in the next section. It implies the main theorems already stated in the introduction:

\begin{theorem}[cf. Thm. \ref{MainThm2}]
\label{OrdLocThm}
  The $\mu$-ordinary locus in $\calS$ is equal to the open Ekedahl-Oort stratum $\calS^{w_{\mathrm{max}}}$, in particular it is open and dense. Further, for any two $\Fbar$-valued points $x,x'$ in the $\mu$-ordinary locus there is an isomorphism $(\bbD_x,s_{\mathrm{cris},x})\simeq(\bbD_{x'},s_{\mathrm{cris},x'})$.
\end{theorem}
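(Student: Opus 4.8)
The plan is to reduce the whole statement to the group theoretic Proposition~\ref{OrdEqProp1} by means of the commutative triangle relating $\gamma$, $\theta=\tilde{\theta}\circ\gamma$ and $\zeta=\tilde{\zeta}\circ\gamma$; in particular no surjectivity of $\gamma$ will be needed. First I would rewrite the two relevant strata as preimages under $\gamma$ of subsets of $\calC(\calG,\mu)$, namely
\[
  \calN^{b_{\mathrm{max}}}=\gamma^{-1}\bigl(\calC(\calG,\mu)\cap b_{\mathrm{max}}\bigr)\quad\text{and}\quad \calS^{w_{\mathrm{max}}}=\gamma^{-1}\bigl([[\tilde{w}_{\mathrm{max}}]]\bigr),
\]
where $b_{\mathrm{max}}=[\mu(p)]$, $w_{\mathrm{max}}=w_{0,J}w_0$, and $\tilde{w}_{\mathrm{max}}=\dot{w}_{\mathrm{max}}\dot{w}_0\sigma(\dot{w}_{0,J})\mu(p)$ is the element attached to $w_{\mathrm{max}}$ in Definition~\ref{TrRepDef}. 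Thus it suffices to prove the equality of subsets $\calC(\calG,\mu)\cap b_{\mathrm{max}}=[[\tilde{w}_{\mathrm{max}}]]$ of $\calC(\calG,\mu)$, and that this common set is a single point.

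Next I would pin down the fibre of $\tilde{\zeta}$ over $w_{\mathrm{max}}$. Writing $\mu(p)=1\cdot\mu(p)\cdot 1$, the formula for $\tilde{\zeta}$ in Proposition~\ref{ZetaFacProp} gives $\tilde{\zeta}(\langle\mu(p)\rangle)=E_{\chi}\cdot 1$. In the notation of~\ref{ZipClass} the $E_{\chi}$-orbit of $1$ equals $O^{w}$ for the unique $w\in{}^JW$ with $w_{0,J}ww_0=1$ in $W$: indeed for such $w$ the element $\bar{\dot{w}}_{0,J}\bar{\dot{w}}\bar{\dot{w}}_0$ lies in $T$, and $E_{\chi}$ absorbs $T$ on the right by Remark~\ref{OrbitRepRem}(1). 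Since $w_{0,J}$ and $w_0$ are involutions this forces $w=w_{0,J}w_0=w_{\mathrm{max}}$, so $\tilde{\zeta}(\langle\mu(p)\rangle)=w_{\mathrm{max}}$. As $\mu(p)$ and $\tilde{w}_{\mathrm{max}}$ both lie in $K\mu(p)K$, and $\langle\tilde{w}_{\mathrm{max}}\rangle$ lies in $\tilde{\zeta}^{-1}(\{w_{\mathrm{max}}\})=[[\tilde{w}_{\mathrm{max}}]]$ by construction, part~(iii) of Proposition~\ref{ZetaFacProp} then yields $[[\tilde{w}_{\mathrm{max}}]]=[[\mu(p)]]$.

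Now Proposition~\ref{OrdEqProp1} says precisely that $\calC(\calG,\mu)\cap b_{\mathrm{max}}=\calC(\calG,\mu)\cap[\mu(p)]=\langle\mu(p)\rangle=[[\mu(p)]]$, which is a single element of $\calC(\calG,\mu)$; combining with the previous step gives $\calC(\calG,\mu)\cap b_{\mathrm{max}}=[[\tilde{w}_{\mathrm{max}}]]$ and hence $\calN^{b_{\mathrm{max}}}=\calS^{w_{\mathrm{max}}}$. Openness and density then follow from Remark~\ref{EOPropRem}(3). For the last assertion, any $\Fbar$-valued point $x$ of the $\mu$-ordinary locus has $\gamma(x)\in\calC(\calG,\mu)\cap b_{\mathrm{max}}=\{\langle\mu(p)\rangle\}$, so $\gamma(x)=\gamma(x')$ for any two such points; unravelling the definition of $\gamma$ this means $g_{\beta'}=hg_{\beta}\sigma(h)^{-1}$ for some $h\in\calG(\calO)$, and Corollary~\ref{LinearizationCor} produces the desired isomorphism $(\bbD_x,s_{\mathrm{cris},x})\simeq(\bbD_{x'},s_{\mathrm{cris},x'})$.

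The substantive input, Proposition~\ref{OrdEqProp1}, is proved only in the final section and may be taken for granted here; everything else is formal manipulation of the diagram. Within that manipulation the one step that needs genuine care is the identification $\tilde{\zeta}(\langle\mu(p)\rangle)=w_{\mathrm{max}}$, i.e.\ matching the distinguished Ekedahl--Oort index with the $\mu$-ordinary Newton class; this depends on the explicit description of the homeomorphism $\calG_{\Fp}\mathtt{-Zip}^{\chi}(\Fbar)\simeq{}^JW$ recalled in~\ref{ZipClass} and on the specific choice of frame $y=\dot{w}_{0,J}\dot{w}_0$ made there.
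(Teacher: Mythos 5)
Your proposal is correct and follows essentially the same route as the paper's proof: reduce everything, via the factorization through $\gamma$, to the group-theoretic statement of Proposition~\ref{OrdEqProp1} and the identity $[[\tilde{w}_{\mathrm{max}}]]=[[\mu(p)]]$, then invoke Remark~\ref{EOPropRem}(3) for density and Corollary~\ref{LinearizationCor} for the final isomorphism statement. The only cosmetic difference is that where the paper just asserts $[[\tilde{w}_{\mathrm{max}}]]=[[\mu(p)]]$ directly from $w_{\mathrm{max}}=w_{0,J}w_0$ and the definition of $\tilde{w}$, you derive it by computing $\tilde{\zeta}(\langle\mu(p)\rangle)=E_\chi\cdot 1=O^{w_{\mathrm{max}}}$ and then applying Proposition~\ref{ZetaFacProp}(iii), which spells out a computation the paper leaves implicit.
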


\begin{proof}
  The $\mu$-ordinary locus is by definition equal to $\calN^{b_{\mathrm{max}}}$, and we have $b_{\mathrm{max}}=[\mu(p)]$, see Remark \ref{NewtParRem}. On the other hand, as $w_{\mathrm{max}}=w_{0,J}w_0$, we have $[[\tilde{w}_{\mathrm{max}}]]=[[\mu(p)]]$. So the equality of the strata follows from Prop. \ref{OrdEqProp1} and the observation (2) above. By Remark \ref{EOPropRem}(3), $\calS^{w_{\mathrm{max}}}$ is open and dense in $\calS$, so the same holds for the $\mu$-ordinary locus. Since Prop. \ref{OrdEqProp1} also asserts that we have the equality $\calN^{b_{\mathrm{max}}}=\gamma^{-1}(\{\langle\mu(p)\rangle\})$, the last statement follows from Corollary \ref{LinearizationCor}.
\end{proof}

\begin{corollary}[cf. Thm. \ref{MainThm1}]
\label{OrdLocCor}
  The $\mu$-ordinary locus in $\scrS\otimes\kappa(v)$ is open and dense.
\end{corollary}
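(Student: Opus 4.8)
The plan is to deduce this from Theorem~\ref{OrdLocThm} by faithfully flat descent of topological properties along the base change morphism $\pi\colon\scrS\otimes\Fbar\to\scrS\otimes\kappa(v)$. First I would record that the $\mu$-ordinary locus $\calN^{b_{\mathrm{max}}}\subseteq\scrS\otimes\kappa(v)$ is a locally closed subset (Proposition~\ref{NewtonStratProp}), and that its preimage $\pi^{-1}(\calN^{b_{\mathrm{max}}})$ is exactly the underlying topological space of the $\mu$-ordinary locus of $\scrS\otimes\Fbar$ in the sense of Section~\ref{StratCompSec}. This is immediate, since the map $\mathrm{Newt}$ is computed through geometric points and is therefore unchanged when the residue field of a point is enlarged, and since all strata in question are locally closed and hence determined by their $\Fbar$-valued points. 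By Theorem~\ref{OrdLocThm} this preimage equals the Ekedahl-Oort stratum $\calS^{w_{\mathrm{max}}}$, which by Remark~\ref{EOPropRem}(3) is open and dense in $\scrS\otimes\Fbar$.

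Next I would invoke elementary properties of $\pi$. As the base change of $\Spec\Fbar\to\Spec\kappa(v)$, and since $\Fbar$ is an algebraic (hence integral) extension of the finite field $\kappa(v)$, the morphism $\pi$ is integral and surjective; in particular it is universally closed, and a continuous closed surjection is a topological quotient map. Consequently a subset of $\scrS\otimes\kappa(v)$ is open precisely when its $\pi$-preimage is open, and applying this to $\calN^{b_{\mathrm{max}}}$ together with the previous paragraph shows that $\calN^{b_{\mathrm{max}}}$ is open in $\scrS\otimes\kappa(v)$. (One may equally well argue that $\pi$ is faithfully flat and quasi-compact, hence submersive.)

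For density I would use surjectivity of $\pi$ directly: from $\pi(\pi^{-1}(\calN^{b_{\mathrm{max}}}))=\calN^{b_{\mathrm{max}}}$, $\pi(\scrS\otimes\Fbar)=\scrS\otimes\kappa(v)$, continuity of $\pi$, and the density of $\pi^{-1}(\calN^{b_{\mathrm{max}}})$ in $\scrS\otimes\Fbar$ one obtains
\[
  \scrS\otimes\kappa(v)=\pi\big(\overline{\pi^{-1}(\calN^{b_{\mathrm{max}}})}\big)\subseteq\overline{\pi\big(\pi^{-1}(\calN^{b_{\mathrm{max}}})\big)}=\overline{\calN^{b_{\mathrm{max}}}},
\]
so $\calN^{b_{\mathrm{max}}}$ is dense in $\scrS\otimes\kappa(v)$. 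Combined with openness, this is precisely the assertion of the corollary.

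I do not expect any real obstacle here: the argument is purely formal descent along $\pi$, and everything substantive has already gone into Theorem~\ref{OrdLocThm} (ultimately Proposition~\ref{OrdEqProp1}). The only point deserving a sentence of care is the identification of $\pi^{-1}(\calN^{b_{\mathrm{max}}})$ with the $\mu$-ordinary locus of $\scrS\otimes\Fbar$, which rests on the compatibility of the Newton stratification with enlargement of the residue field together with the fact that the strata are locally closed.
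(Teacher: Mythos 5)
Your proof is correct and is precisely the routine descent argument the paper leaves implicit (the corollary is stated without proof as an immediate consequence of Theorem~\ref{OrdLocThm}). The only point deserving care --- that $\pi^{-1}(\calN^{b_{\mathrm{max}}})$ coincides topologically with the $\mu$-ordinary locus of $\scrS\otimes\Fbar$, because the Newton map is computed through geometric points and the strata are locally closed --- is exactly the one you flag, and it is handled correctly.
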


\section{Ordinary loci for reductive unramified groups}
\label{GroupResSec}

In the final section of this paper we prove Prop. \ref{OrdEqProp1} in a more general setup which also includes the function field case. We modify our notation accordingly:

\begin{notation}
  Let $\Fq$ be a finite extension of $\Fp$, and let $F$ be either $\Qq$ or the field $\Fq((t))$ of Laurent series.
We denote by $L$ the completion of the maximal unramified extension of $F$, i.e. either $L=\Frac(W(\Fbar))$ or $L=\Fbar((t))$. Here as before $\Fbar$ is an algebraic closure of $\Fp$. Let $\calO\subseteq L$ and $\calO_F\subseteq F$ be the valuation rings respectively, let $\epsilon$ be a uniformizing element of $\calO_F$ (and hence of $\calO$), e.g. $\epsilon=p$ in the case of mixed characteristics or $\epsilon=t$ in the equicharacteristic case. In this section we denote by $\sigma$ the Frobenius map $a\mapsto a^q$ of $\Fbar$ over $\bbF_q$, and also the corresponding maps on $\calO$ and on $L$. 

  Let $\calG$ be a connected, reductive group over $\calO_F$ (it is then quasisplit and split over a finite unramified extension of $\calO_F$). Fix a Borel pair $(\calB,\calT)$ defined over $\calO_F$. As before, we obtain the groups $X_*(\calT), X^*(\calT)$ of cocharacters and characters of $\calT$ over $\calO$, the roots resp. positive roots $\Phi^*\subset\Phi\subset X^*(T)$ with respect to $\calT$, and the Weyl group $(W,S)$, all equipped with an action of $\sigma$.

As in section \ref{StratCompSec} let $K:=\calG(\calO)$ and $K_1:=\{g\in K\mid \bar{g}=1\}$, and for $g\in\calG(L)$ define the sets $[g]$, $\langle g\rangle$ and $[[g]]$ as before. Let $I:=\{g\in K\mid \bar{g}\in \calB(k)\}$, this is an Iwahori subgroup of $\calG(L)$.

\end{notation}

The formulation of Prop. \ref{OrdEqProp1} in this context reads as follows: 

\begin{proposition}
\label{OrdEqProp2}
  Let $\mu\in X_*(\calT)$ be a dominant cocharacter. Then for $g\in \calG(L)$ the following are equivalent:
  \begin{enumerate}[(i)]
    \item $g\in [\mu(\epsilon)]\cap K\mu(\epsilon)K$,
    \item $g\in \langle\mu(\epsilon)\rangle$,
    \item $g\in [[\mu(\epsilon)]]$.
  \end{enumerate}
\end{proposition}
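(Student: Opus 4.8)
The two inclusions $\langle\mu(\epsilon)\rangle\subseteq[[\mu(\epsilon)]]$ and $\langle\mu(\epsilon)\rangle\subseteq[\mu(\epsilon)]\cap K\mu(\epsilon)K$ hold by inspection, so it is enough to prove the reverse inclusions
\[
  [[\mu(\epsilon)]]\ \subseteq\ \langle\mu(\epsilon)\rangle\qquad\text{and}\qquad[\mu(\epsilon)]\cap K\mu(\epsilon)K\ \subseteq\ \langle\mu(\epsilon)\rangle;
\]
then all three sets coincide with $\langle\mu(\epsilon)\rangle$ and the equivalence of (i)--(iii) follows. I will call these \emph{Part 1} and \emph{Part 2}.

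For Part 1: $[[\mu(\epsilon)]]$ is the union of the $K$-$\sigma$-conjugacy classes meeting $K_1\mu(\epsilon)K_1$, and since for $k_1,k_2\in K_1$ one has $k_1^{-1}(k_1\mu(\epsilon)k_2)\sigma(k_1)=\mu(\epsilon)\,(k_2\sigma(k_1))$ with $k_2\sigma(k_1)\in K_1$, it suffices to show $\mu(\epsilon)k\in\langle\mu(\epsilon)\rangle$ for every $k\in K_1$. The plan is to solve $h\mu(\epsilon)\sigma(h)^{-1}=\mu(\epsilon)k$ for $h\in K$ by successive approximation modulo $\epsilon^{n}$. The infinitesimal form of this equation is governed by the operator $1-\mathrm{Ad}(\mu(\epsilon))\circ\sigma$ on $\Lie\calG(\calO)$, and the dominance of $\mu$ is exactly what makes this operator invertible order by order: on the $\mathfrak{g}_{\alpha}$-part with $\langle\alpha,\mu\rangle>0$ one inverts it by a geometric series in the contracting operator $\mathrm{Ad}(\mu(\epsilon))\circ\sigma$, on the part with $\langle\alpha,\mu\rangle<0$ by a geometric series in the contracting operator $\sigma^{-1}\circ\mathrm{Ad}(\mu(\epsilon))^{-1}$, and on the $\calT$-part and the parts with $\langle\alpha,\mu\rangle=0$ one uses that $1-(\text{unit})\circ\sigma$ is surjective on every $\epsilon^{n}\calO$-module (the additive form of Lang's theorem, plus Hensel). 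A standard iteration then produces the required $h\in K$. Hence $[[\mu(\epsilon)]]=\langle\mu(\epsilon)\rangle$, and in particular $[[\mu(\epsilon)]]\subseteq[\mu(\epsilon)]\cap K\mu(\epsilon)K$.

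For Part 2: given $g\in[\mu(\epsilon)]\cap K\mu(\epsilon)K$, write $g=k_1\mu(\epsilon)k_2$ and replace $g$ by $k_1^{-1}g\,\sigma(k_1)$ to reduce to $g=\mu(\epsilon)k$, $k\in K$. By Kottwitz--Rapoport, Lucarelli and Gashi the image of $K\mu(\epsilon)K$ in the set of $\sigma$-conjugacy classes is $B(\calG,\mu)$, whose elements are distinguished by their Newton points (cf.\ Remark~\ref{NewtParRem}); the maximal Newton point is $\bar\mu$, attained precisely by $[\mu(\epsilon)]$, so the hypothesis $g\in[\mu(\epsilon)]$ says that $g=\mu(\epsilon)k$ has the maximal possible Newton point. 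Set $J:=\{h\in\calG(L)\mid h\mu(\epsilon)\sigma(h)^{-1}=\mu(\epsilon)\}$ and $C:=\{c\in\calG(L)\mid c\mu(\epsilon)\sigma(c)^{-1}\in K\mu(\epsilon)K\}$; then $C$ is left $K$-stable, right $J$-stable and contains $1$, and the whole of Part 2 reduces to the single identity $C=K\cdot J$ (equivalently $KcJ=KJ$): indeed, writing a $c$ with $c\mu(\epsilon)\sigma(c)^{-1}=g$ as $c=k_0j$ with $k_0\in K$, $j\in J$, gives $g=k_0\mu(\epsilon)\sigma(k_0)^{-1}\in\langle\mu(\epsilon)\rangle$.

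The hard part will be this identity $C=K\cdot J$ --- that the hyperspecial-level affine Deligne--Lusztig set attached to the $\mu$-ordinary class consists of a single $J$-orbit. Conceptually it is a Hodge--Newton statement: because the Newton homomorphism $\bar\mu$ is central in the $\sigma$-stable Levi $\calM:=\Cent_{\calG}(\bar\mu)$ and takes the maximal value, the $\calO$-structure underlying $g$ should acquire a $\sigma$-stable splitting indexed by the slopes of $\bar\mu$, with the elementary divisors on each graded piece matching $\mu(\epsilon)$; on $\calM$ the element then becomes a central cocharacter-translate times a ``unit of slope $0$'', and the argument of Part 1 carried out inside $\calM$ (Lang's theorem modulo $\epsilon$) finishes. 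I expect to establish the splitting at integral level by starting from a choice of $c$ and using the Iwahori factorisation of $K$, the identity $\lambda(\epsilon)\calU_{\alpha}(x)\lambda(\epsilon)^{-1}=\calU_{\alpha}(\epsilon^{\langle\alpha,\lambda\rangle}x)$, and the dominance of $\mu$ to push $c$ into $K\cdot J$; alternatively, the required input can be read off from Viehmann's study of $\sigma$-conjugacy classes ``up to level~$1$'' in \cite{Vi1}. For $\GL_n$ this reduction is the classical rigidity of ordinary Dieudonn\'e modules; carrying it out integrally and uniformly for an arbitrary quasi-split $\calG$ is where essentially all of the work lies.
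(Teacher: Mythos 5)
Your overall plan is the paper's plan: you split the proof into $(iii)\Rightarrow(ii)$ and $(i)\Rightarrow(ii)$, you pick the Levi $\calM=\Cent_{\calG}(\bar\mu)$, you note that $\mu(\epsilon)$ is central in $\calM$ because $\mu$ is dominant and $\calM$ is cut out by $\bar\mu$, and you finish with Lang's theorem over $\calM(\calO)$. So the route is essentially the same; what is missing in both halves is the identification (or proof) of the precise literature input that the paper uses, and those two inputs are exactly where the content is.

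For Part~1, your successive-approximation sketch is the right heuristic, but as written it is not a proof: $\calG$ is non-abelian, so $1-\mathrm{Ad}(\mu(\epsilon))\circ\sigma$ does not act block-diagonally on $\Lie\calG(\calO)$, and the corrections at each order of $\epsilon$ mix the expanding, contracting and neutral root directions. Inverting the linearisation piecewise by geometric series is fine, but interleaving the three inversions in a way that converges in $K$ requires an Iwahori factorisation of $K_1$ and care with cross-terms, and that is precisely the content of the result the paper quotes: $\mu$ is a fundamental $(\emptyset,1,\sigma)$-alcove in the sense of G\"ortz--He--Nie (\cite{GHN}, Thm.~3.3.1, Prop.~3.4.3), so every element of $I\mu(\epsilon)I\supseteq K_1\mu(\epsilon)K_1$ is $I$-$\sigma$-conjugate to $\mu(\epsilon)$. (The paper also points to \cite{Vi1}, Thm.~1.1.(2), for the statement with $K_1$ in place of $I$.) You should cite this rather than gesture at re-deriving it.

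For Part~2 you correctly reformulate the claim as ``$X_{\mu}^{\calG}(\mu(\epsilon))$ is a single $J$-orbit'' and you have the right Levi and the right idea (split off $\calM$, where centrality plus Lang finishes), but you leave the reduction from $\calG$ to $\calM$ as an expectation. The missing tool is the Hodge--Newton decomposition for affine Deligne--Lusztig sets of Mantovan--Viehmann (\cite{MV}, Thm.~6), which gives a \emph{bijection} $X_{\mu}^{\calM}(\mu(\epsilon))\xrightarrow{\ \sim\ }X_{\mu}^{\calG}(\mu(\epsilon))$ once one checks that $\bar\mu$ is $\calG$-dominant and that $\kappa_{\calM}([\mu(\epsilon)]_{\calM})$ lies in $\pi_1(\calM)^{+}_{\langle\sigma\rangle}$; both checks are short and use dominance of $\mu$ in the same way your Part~1 does. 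After this reduction, centrality of $\mu(\epsilon)$ in $\calM$ turns $m^{-1}\mu(\epsilon)\sigma(m)\in\calM(\calO)\mu(\epsilon)\calM(\calO)$ into $m^{-1}\sigma(m)\in\calM(\calO)$, and Lang's theorem over $\calM(\calO)$ writes $m$ as an element of $\calM(F)\cdot\calM(\calO)\subseteq J\cdot K$, which is your identity $C=KJ$. Your alternative plan of re-deriving the splitting by hand from $\lambda(\epsilon)\calU_\alpha(x)\lambda(\epsilon)^{-1}=\calU_\alpha(\epsilon^{\langle\alpha,\lambda\rangle}x)$ would amount to reproving the Mantovan--Viehmann theorem; the efficient and safe route is to invoke it.
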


Note that $\mu$ is not assumed to be minuscule. 

\begin{remark}
\label{MoonenCompRem}
  This proposition should be seen as a generalization of (\cite{Mo2}, Thm. 1.3.7. resp. Thm. 3.2.7.):  If $\calG$ arises from a PEL-type Shimura datum, then the element $\mu(\epsilon)\in\calG(L)$ takes the place of the $p$-divisible group $\underline{X}^{\mathrm{ord}}$ defined in \cite{Mo2}.
\end{remark}

We will prove Prop. \ref{OrdEqProp2} throughout the rest of the section. Of course, the implications $(ii)\Rightarrow(i)$ and $(ii)\Rightarrow(iii)$ are trivial.\\

  The implication $(iii)\Rightarrow(ii)$ follows from the property that every element in the double coset $I\mu(\epsilon) I$ is $\sigma$-conjugate to $\mu(\epsilon)$ by an element of $I$. This is well-known, for example it is a consequence of the fact that, with the conventions of \cite{GHN}, the element $\mu\in \widetilde{W}=X_*(\calT)\rtimes W$ is a fundamental $(\emptyset,1,\sigma)$-alcove (see loc.cit., Thm. 3.3.1., Prop. 3.4.3.). So if $g\in [[\mu(\epsilon)]]$, say $g=hh_1\mu(\epsilon)h_1'\sigma(h)^{-1}$ for some $h\in K$ and $h_1,h_1'\in K_1$, then in particular $h_1\mu(\epsilon)h_1'\in I\mu(\epsilon)I$ and so there is an $i\in I$ such that $h_1\mu(\epsilon)h_1'=i\mu(\epsilon)\sigma(i)^{-1}$ and thus $g=(hi)\mu(\epsilon)\sigma(hi)^{-1}\in\langle\mu(\epsilon)\rangle$.\\

  To show the remaining implication we will use the Hodge-Newton decomposition for affine Deligne-Lusztig sets in the affine Grassmannian, which was first formulated for unramified groups by Kottwitz and later generalized by Mantovan and Viehmann. We briefly recall the main statement: Let $\lambda\in X_*(\calT)$ be a cocharacter (which is usually taken to be dominant) and let $b_0\in\calG(L)$, then the affine Deligne-Lusztig set associated to these elements is defined as
\[
  X_{\lambda}^{\calG}(b_0):=\{g\in\calG(L)/K\mid g^{-1}b_0\sigma(g)\in K\lambda(\epsilon)K\}.
\]
Let $\calM\subseteq\calG$ be a Levi subgroup defined over $\calO_F$ such that $\calT\subseteq\calM$ (note that this differs from the $\calM$ considered in section \ref{StratCompSec}), then $\calM$ is again a connected reductive group. Consider the Borel pair $(\calB\cap\calM, \calT)$. Then $\Phi_{\calM}\subseteq\Phi$, $\Phi_{\calM}^+=\Phi^+\cap\Phi_{\calM}$ and the Weyl group of $\calM$ is of the form $(W_{S_{\calM}},S_{\calM})$ for some subset $S_{\calM}\subseteq S$. We have the Newton map and Kottwitz map for $\calM$
\[
  \nu_{\calM}\colon B(\calM)\longrightarrow(W_{\calM}\setminus X_*(\calT)_{\bbQ})^{\langle\sigma\rangle}, \qquad \kappa_{\calM}\colon B(\calM)\longrightarrow \pi_1(\calM)_{\langle\sigma\rangle}.
\]
For every $\lambda\in X_*(\calT)$ and $b_0\in\calM(L)$ there is the analogous Deligne-Lusztig set 
\[
  X_{\lambda}^{\calM}(b_0)=\{m\in\calM(L)/\calM(\calO)\mid m^{-1}b_0\sigma(m)\in \calM(\calO)\lambda(\epsilon)\calM(\calO)\},
\]
and a natural map $X_{\lambda}^{\calM}(b_0)\to X_{\lambda}^{\calG}(b_0)$, which is clearly injective. 

Let $V:=X_*(\calT)\otimes_{\bbZ}\bbR$, it carries an action of $W$ and of $\sigma$. Let $V_{\calM}\subseteq V$ be the subspace of elements which are invariant under the action of $W_{\calM}$ and $\sigma$, and let 
\[
  V_{\calM}^+:=\{v\in V_{\calM}\mid \langle\alpha,v\rangle >0\text{ for all }\alpha\in\Phi^+\setminus\Phi_{\calM}^+\}.
\]
Fix an $n\in\bbN$ such that $\sigma^n$ acts as the identity on $V$. The composition of the two maps 
\[
 v\longmapsto \frac{1}{n}\sum_{i=0}^{n-1}\sigma^i(v),\quad  \quad v\longmapsto \frac{1}{|W_{\calM}|}\sum_{w\in W_{\calM}}w(v)
\]
gives a projection map $V\twoheadrightarrow V_{\calM}$, which induces an isomorphism $\pi_1(\calM)_{\langle\sigma\rangle}\otimes_{\bbZ}\bbR\simeq V_{\calM}$. Let $\pi_1(\calM)_{\langle\sigma\rangle}^+$ be the subset of elements whose image under the resulting map $\pi_1(\calM)_{\langle\sigma\rangle}\to V_{\calM}$ lies in $V_{\calM}^+$.

\begin{proposition}[\cite{MV}, Thm. 6]
\label{HNDecProp}
  Let $\lambda\in X_*(\calT)$ be $\calG$-dominant, let $b_0\in\calM(L)$ such that $\kappa_{\calM}([b_0]_{\calM})$ equals the image of $\lambda$ in $\pi_1(\calM)_{\langle\sigma\rangle}$. If $\kappa_{\calM}([b_0]_{\calM})\in\pi_1(\calM)_{\langle\sigma\rangle}^+$ and the $\calM$-dominant representative of $\nu_{\calM}([b_0]_{\calM})\in (W_{\calM}\setminus X_*(\calT)_{\bbQ})^{\langle\sigma\rangle}$ is also $\calG$-dominant, then the natural map $X_{\lambda}^{\calM}(b_0)\hookrightarrow X_{\lambda}^{\calG}(b_0)$ is an isomorphism.
\end{proposition}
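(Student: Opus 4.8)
The plan is to prove surjectivity of the (already injective) natural map $X_\lambda^{\calM}(b_0)\hookrightarrow X_\lambda^{\calG}(b_0)$. Let $\calP=\calM\calN$ be the standard parabolic of $\calG$ over $\calO_F$ with Levi $\calM$ and unipotent radical $\calN$, so that the roots of $\calN$ are exactly $\Phi^+\setminus\Phi_\calM^+$, and let $\bar\calP$ be its opposite. Since $\calM$ is reductive over $\calO_F$ we have $\calM(L)\cap K=\calM(\calO)$, so $X_\lambda^{\calM}(b_0)$ is identified with the subset $\{\,mK : m^{-1}b_0\sigma(m)\in\calM(\calO)\lambda(\epsilon)\calM(\calO)\,\}$ of $\calM(L)K/K\subseteq\calG(L)/K$. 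Thus it suffices to establish two things: \emph{(A)} $X_\lambda^{\calG}(b_0)\subseteq\calM(L)K/K$; and \emph{(B)} whenever $gK=mK$ with $m\in\calM(L)$ and $m^{-1}b_0\sigma(m)\in K\lambda(\epsilon)K$, one in fact has $m^{-1}b_0\sigma(m)\in\calM(\calO)\lambda(\epsilon)\calM(\calO)$.

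For \emph{(A)} I would argue via the Iwasawa decomposition $\calG(L)=\calN(L)\calM(L)K$ for the special maximal compact $K$. Given $gK\in X_\lambda^{\calG}(b_0)$, pick $n\in\calN(L)$, $m\in\calM(L)$ with $gK=nmK$ and replace $g$ by $nm$; then $X:=g^{-1}b_0\sigma(g)$ still lies in $K\lambda(\epsilon)K$, and because $b_0\in\calM(L)$ normalizes $\calN(L)$ a direct rewriting gives $X=(m^{-1}\phi(n)m)\,b_1$ with $b_1:=m^{-1}b_0\sigma(m)\in\calM(L)$ and $\phi(n):=n^{-1}\,b_0\,\sigma(n)\,b_0^{-1}\in\calN(L)$, so in particular $X\in\calP(L)$. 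Since $K\lambda(\epsilon)K$ is a bounded subset of $\calG(L)$, its intersection with $\calP(L)$ has bounded Iwasawa coordinates, so $b_1$ and $n_*:=m^{-1}\phi(n)m$ — hence $\phi(n)$ — are bounded. Now the Newton hypotheses enter exactly here: the $\calM$-dominant representative $\nu$ of $\nu_\calM([b_0]_\calM)$ is $\calG$-dominant, and $\kappa_\calM([b_0]_\calM)\in\pi_1(\calM)_{\langle\sigma\rangle}^+$ says precisely that $\langle\alpha,\nu\rangle>0$ for all $\alpha\in\Phi^+\setminus\Phi_\calM^+$. Therefore the $\sigma$-semilinear twisted conjugation $\phi$ (equivalently, iterated conjugation by $b_1(\epsilon)$) strictly contracts the pole filtration on $\calN(L)$ towards $\calN(\calO)$. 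Running a layer-by-layer successive-approximation argument along the descending central series of $\calN$ — comparing $\phi(n)\in m\cdot(\text{bounded})\cdot m^{-1}$ against $n$ and using the strict contraction to absorb the non-abelian correction terms — should force $n\in\calN(\calO)\subseteq K$, whence $gK=nmK=mK$ with $m\in\calM(L)$. This gives \emph{(A)}.

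For \emph{(B)}, with $gK=mK$ as above and $m^{-1}b_0\sigma(m)\in K\lambda(\epsilon)K\cap\calM(L)$, the Cartan decomposition in $\calM$ produces a unique $\calM$-dominant $\mu'\in X_*(\calT)$ with $m^{-1}b_0\sigma(m)\in\calM(\calO)\mu'(\epsilon)\calM(\calO)$; membership in $K\lambda(\epsilon)K$ forces $\mu'\in W\lambda$, and $\kappa_\calM$ of this $\sigma$-conjugacy class equals $\kappa_\calM([b_0]_\calM)$, which by hypothesis is the image of $\lambda$ in $\pi_1(\calM)_{\langle\sigma\rangle}$. Combining this with Mazur's inequality in $\calM$ (which bounds $\nu_\calM([b_0]_\calM)$ above by the Newton point of $\mu'(\epsilon)$ in $\calM$) and with the $\calG$-dominance of $\nu$, a short convexity computation in $X_*(\calT)_\bbR$ — of the same flavour as the standard one showing the Hodge and Newton points are in Hodge–Newton position, using $S_\calM\subseteq S$ and pairing against the fundamental weights $\varpi_\beta$ for $\beta\in S\setminus S_\calM$ — should yield $\mu'=\lambda$. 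Then $m^{-1}b_0\sigma(m)\in\calM(\calO)\lambda(\epsilon)\calM(\calO)$, i.e. $mK\in X_\lambda^{\calM}(b_0)$ maps onto $gK$, completing the proof of surjectivity and hence of the proposition.

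The hard part will be the contraction/successive-approximation step in \emph{(A)}: making it precise that iterated $b_0$-twisted conjugation contracts $\calN(L)$ to the \emph{integral} points, rather than merely to a bounded set, and organizing the induction over the nilpotence layers of $\calN$ compatibly with the non-commutative corrections. This is exactly the technical core of Kottwitz's treatment of the split case and of its generalization by Mantovan–Viehmann, i.e. of the reference \cite{MV} cited for the statement. An alternative, if the direct group-theoretic contraction proves awkward, is to transport the situation to isocrystals with $\calG$-structure and deduce the splitting from the Hodge–Newton filtration there; the positivity and $\calG$-dominance hypotheses then translate into the slopes "outside $\calM$" being strictly larger (resp. separated), forcing the corresponding off-diagonal part of the lattice to be trivial.
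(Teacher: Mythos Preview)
The paper does not give its own proof of this proposition: it is simply quoted as \cite[Thm.~6]{MV}, so there is nothing in the paper to compare your argument against. Your sketch is essentially an outline of the Kottwitz--Mantovan--Viehmann proof itself (as you note at the end), via the Iwasawa decomposition $\calG(L)=\calN(L)\calM(L)K$ and a contraction argument on $\calN(L)$.

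Two points where your outline is imprecise. First, in step \emph{(A)} you write that $\kappa_{\calM}([b_0]_{\calM})\in\pi_1(\calM)_{\langle\sigma\rangle}^+$ ``says precisely that $\langle\alpha,\nu\rangle>0$ for all $\alpha\in\Phi^+\setminus\Phi_{\calM}^+$''; but the positivity condition is on the image of $\kappa_{\calM}$ in $V_{\calM}$ (i.e.\ on the averaged projection of $\lambda$), not directly on the Newton point $\nu$. In the actual argument both hypotheses are used, and one has to be careful which one drives the contraction on $\calN$; this is exactly where \cite{MV} differs from Kottwitz's split case \cite{Ko3}. Second, in step \emph{(B)} your appeal to ``Mazur's inequality in $\calM$'' together with a ``short convexity computation'' to conclude $\mu'=\lambda$ is too vague as stated: knowing $\mu'\in W\lambda$ and that $\mu'$ and $\lambda$ have the same image in $\pi_1(\calM)_{\langle\sigma\rangle}$ does not by itself pin down $\mu'$, and one needs to use the $\calG$-dominance of $\lambda$ together with the $\calM$-dominance of $\mu'$ more carefully. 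These are exactly the details worked out in \cite{MV}, so your sketch is a reasonable roadmap but not an independent proof.
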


Now we show the remaining implication $(i)\Rightarrow (ii)$ of Prop. \ref{OrdEqProp2}: Consider an element $g\in [\mu(\epsilon)]\cap K\mu(\epsilon)K$. Then $g=h^{-1}\mu(\epsilon)\sigma(h)$ for some $h\in\calG(L)$, and we need to show that we may replace $h$ by some element of $K$. By definition, $h$ lies in the affine Deligne-Lusztig set $X_{\mu}^{\calG}(\mu(\epsilon))$. Let $\bar{\mu}:=\frac{1}{n}\sum_{i=0}^{n-1}\sigma^i(\mu)$, where as before $n\in\bbN$ is chosen such that $\sigma^n$ acts as the identity. Consider the subgroup
\[
  \calM:=\Cent_{\calG}(\bar{\mu}):=\Cent_{\calG}(n\cdot\bar{\mu})\subseteq\calG,
\]
this is a Levi subgroup (cf. \cite{SGA3}, Exp. XXVI, Cor. 6.10.) which is defined over $\calO_F$, as $n\cdot\bar{\mu}$ is $\sigma$-invariant. 

We claim that $\mu(\epsilon)$ is central in $\calM$: Indeed, we have 
\[
  \mathrm{Z}(\calM)=\bigcap_{\alpha\in\Phi_{\calM}}\ker(\alpha)\subseteq \calT
\]
(see \cite{SGA3}, Exp. XXII, Cor. 4.1.6.). Let $\alpha\in\Phi_{\calM}^+$. By definition of $\calM$ the cocharacter $n\cdot\bar{\mu}$ maps to the center of $\calM$, so we find that
\[
  0=\langle\alpha,n\cdot\bar{\mu}\rangle=\sum_{i=0}^{n-1}\langle\alpha,\sigma^i(\mu)\rangle.
\]
Since $\sigma$ acts on the set of dominant cocharacters, every summand in the upper equation is nonnegative, so they are all equal to zero. In particular, $\langle\alpha,\mu\rangle=0$ (for every $\alpha\in\Phi_{\calM}^+$), which implies that $\mu$ is also central in $\calM$.

Next, note that the pair $(\mu,\mu(\epsilon))$ satisfies the conditions of Prop. \ref{HNDecProp}: The $\calM$-dominant Newton vector of $[\mu(\epsilon)]_{\calM}$ is exactly $\bar{\mu}$ (cf. Remark \ref{NewtParRem}), which is $\calG$-dominant as well. Further, the image of $\kappa_{\calM}([\mu(\epsilon)]_{\calM})$ in $V_{\calM}$ is the projection of $\mu \in V$ to $V_{\calM}$, which is also equal to $\bar{\mu}$, and this lies in $V_{\calM}^+$ by definition of $\calM$. We have therefore the Hodge-Newton decomposition $X_{\mu}^{\calM}(\mu(\epsilon))\cong X_{\mu}^{\calG}(\mu(\epsilon))$, so there is an element $m\in\calM(L)$ such that
\[
  mK=hK\quad \text{and} \quad m^{-1}\mu(\epsilon)\sigma(m)\in\calM(\calO)\mu(\epsilon)\calM(\calO).
\]
Since $\mu(\epsilon)$ commutes with every element of $\calM(L)$, the last equation implies that $m^{-1}\sigma(m)\in\calM(\calO)$. As $\calM$ is a connected reductive group over $\calO$, a variant of Lang's theorem holds for $\calM(\calO)$ (see \cite{Vi1}, Lemma 2.1.), so we obtain an element $m'\in\calM(\calO)$ such that $(m')^{-1}\sigma(m')=m^{-1}\sigma(m)$. Let $c\in K$ such that $h=mc$, then altogether we have
\begin{align*}
  g=h^{-1}\mu(\epsilon)\sigma(h) & =c^{-1}(m^{-1}\mu(\epsilon)\sigma(m))\sigma(c)\\
                                 & =c^{-1}(m^{-1}\sigma(m)\mu(\epsilon))\sigma(c)\\
                                 & =c^{-1}((m')^{-1}\sigma(m')\mu(\epsilon))\sigma(c)\\
                                 & =c^{-1}(m')^{-1}\mu(\epsilon)\sigma(m')\sigma(c) \in\langle\mu(\epsilon)\rangle,
\end{align*}
which was to be shown. This concludes the proof of Prop. \ref{OrdEqProp2}. \hfill{$\Box$}

\end{document}